\theoremstyle{plain}
\newtheorem{thm}{Theorem}[section]
\newtheorem{lem}[thm]{Lemma}
\newtheorem{prop}[thm]{Proposition}
\newtheorem{cor}[thm]{Corollary}
\theoremstyle{definition}
\newtheorem{defn}[thm]{Definition}
\newtheorem{rem}[thm]{Remark}
\newcommand{\Z}{{\mathbb{Z}_+}}
\newcommand{\N}{\mathbb{N}}
\newcommand{\mZ}{\mathbb Z}
\newcommand{\ep}{\varepsilon}
\DeclareMathOperator{\diam}{diam}
\DeclareMathOperator{\supp}{supp}
\begin{document}
\title{On proximality with Banach density one}
\author[J. Li]{Jian Li}
\date{\today}
\address[J.~LI]{Department of Mathematics, Shantou University, Shantou, Guangdong 515063, P.R. China}
\email{lijian09@mail.ustc.edu.cn}
\author[S.~Tu]{Siming Tu}
\address[S.~Tu]{Department of Mathematics, University of Science and Technology of China,
Hefei, Anhui, 230026, P.R. China}
\email{tsming@mail.ustc.edu.cn}
\begin{abstract}
Let $(X,T)$ be a topological dynamical system.
A pair of points $(x,y)\in X^2$ is called Banach proximal if
for any $\ep>0$, the set $\{n\in\Z:\ d(T^nx,T^ny)<\ep\}$ has Banach density one.
We study the structure of the Banach proximal relation. An useful tool is the
notion of the support of a topological dynamical system. We show that
a dynamical system is strongly proximal if and only if every pair in $X^2$ is Banach proximal.
A subset $S$ of $X$ is Banach scrambled if every two distinct points in $S$ form a Banach proximal pair
but not asymptotic. We construct a dynamical system with the whole space being a Banach scrambled set.
Even though the Banach proximal relation of the full shift is of  first category,
it has a dense Mycielski invariant Banach scrambled set.
We also show that for an interval map it is Li-Yorke chaotic if and only if
it has a Cantor Banach scrambled set.
\end{abstract}
\keywords{Banach density one, Banach proximality, strongly proximal systems, scrambled sets}
\subjclass[2010]{37B20}
\maketitle

\section{Introduction}
Throughout this paper, a {\em topological dynamical system} is a pair $(X, T)$,
where $X$ is a non-empty compact metric space with a metric $d$ and
$T$ is a continuous map from $X$ to itself.

A pair of points $(x,y)\in X^2$ is called \emph{proximal} if $\liminf_{n\to\infty}d(T^nx,T^ny)=0$.
The \emph{proximal relation} of $(X,T)$, denoted by $P(X,T)$, is the collection of all proximal pairs in $(X,T)$.
The study of proximal relation plays a big role in topological dynamics.
It is well known that the smallest $T\times T$-invariant closed equivalent relation generated by $P(X,T)$
is the maximal distal factor of $(X,T)$ (see \cite{A} for example).
But in general $P(X,T)$ is neither closed nor equivalent.

A subset $F$ of $\Z$ is said to be \emph{syndetic} if there exists $N\geq 1$ such that
$\{n,n+1,\dotsc,n+N\} \cap F\neq\emptyset$ for every $n\in \Z$.
The author in~\cite{C63} introduced the  notion of syndetically proximal relation of $(X,T)$.
A pair of points $(x,y)\in X^2$ is called \emph{syndetically proximal} if
for every $\ep>0$, $\{n\in\Z:\ d(T^nx,T^ny)<\ep\}$ is syndetic.
Denote by $SP(X,T)$  the collection of all syndetically proximal pairs in $(X,T)$.
It is shown in~\cite{C63} that $SP(X,T)$ is an invariant equivalence relation in $X$, but it may be not closed.
If $P(X,T)$ is closed in $X \times X$, then $P(X,T) = SP(X,T)$ and $P(X,T)$ is an invariant closed equivalence
relation in $X$.
Recently, syndetically proximal pairs were studied in~\cite{M11} and~\cite{MP13} in details.

A subset $F$ of $\Z$ is said to have \emph{Banach density one}
if for every $\lambda<1$ there exists $N\geq 1$ such that $\#(F\cap I)\geq \lambda \#(I)$
for every subinterval $I$ of $\Z$ with $\#(I)\geq N$, where $\#(I)$ denotes the number of elements of $I$.
A pair of points $(x,y)\in X^2$ is called \emph{Banach proximal} if
for every $\ep>0$, $\{n\in\Z:\ d(T^nx,T^ny)<\ep\}$ has Banach density one.
Clearly, every set with Banach density one is syndetic, then a Banach proximal pair is syndetically proximal.
Note that it is shown in~\cite{Li11} that for an interval map with zero topological entropy,
every proximal pair is Banach proximal.

This paper is devoted to the study of Banach proximal pairs.
The paper is organized as follows.
In Section 2, we recall  various types of density of subsets of non-negative integers,
some basic definitions in topological dynamics and show some properties of Banach proximal pairs.
In Section 3, we study the support of a dynamical system and show its connection
with the subsets of non-negative integers with some kind of density.
This property is very useful in the discussion of the structure of the Banach proximal relation.
In Section 4, we study the structure of the Banach proximal relation.
We show that a pair of points is Banach proximal if and only if
the support of the closure of the orbit of the pair in the product system is contained in the diagonal.
We also obtain some sufficient conditions for the Banach proximal cell of each point to be small.
In Section 5, we study the proximality on the induced spaces.
It is shown that a dynamical system is strongly proximal if and only if every pair in $X^2$ is Banach proximal.
In the finial section, we investigate Banach scrambled sets.
We construct a dynamical system with the whole space being a Banach scrambled set.
Even though the Banach proximal relation of the full shift is of  first category,
it has a dense Mycielski invariant Banach scrambled set.
We also show that for an interval map, it is Li-Yorke chaotic if and only if
it has a Cantor Banach scrambled set.

\section{Preliminaries}
In this section we recall some notions and aspects of the theory of topological dynamical
systems. We also show some properties of Banach proximal pairs.
\subsection{The density of subsets of non-negative integers}
Denote by $\Z$ ($\N$, $\mZ$, respectively) the set of all non-negative integers (positive integers, integers, respectively).
Let $F$ be a subset of $\mathbb{Z}_+$. Define the \emph{upper density} $\overline{d}(F)$ of $F$ by
\[ \overline{d}(F)=\limsup_{n\to\infty} \frac{\#(F\cap[0,n-1])}{n},\]
where $\#(\cdot)$ is the number of elements of a set.
Similar, $\underline{d}(F)$, the \emph{lower density} of $F$, is defined by
\[ \underline{d}(F)=\liminf_{n\to\infty} \frac{\#(F\cap[0,n-1])}{n}.\]
One may say $F$  has \emph{density} $d(F)$ if $\overline{d}(F)=\underline{d}(F)$,
in which case $d(F)$ is equal to this common value.
The \emph{upper Banach density} $BD^*(F)$ is defined by
\[BD^*(F)=\limsup_{N-M\to\infty} \frac{\#(F\cap[M,N])}{N-M+1}\]
Similarly, we can define the \emph{lower Banach density} $BD_*(F)$ and \emph{Banach density} $BD(F)$.
By the definition of Banach density,
it is easy to see that a subset $F$ of $\Z$ has Banach density one if and only if for every $\lambda<1$ there exists $N\geq 1$
such that $\#(F\cap I)\geq \lambda\#(I)$ for every subinterval of $\Z$ with $\#(I)\geq N$.
It is not difficult to observe the following.
\begin{lem}\label{lem:BD1}
Let $F,F_1,F_2$ be subsets of $\Z$.
\begin{enumerate}
  \item If  $F$ has Banach density one, then for every $n\in\mZ$,
  $F+n=\{k+n\in\Z:\ k\in F\}$ also has Banach density one.
  \item If  $F$ has Banach density one, then $\Z\backslash F$ is a set of Banach density zero.
  \item If $F_1$ and $F_2$ have Banach density one, then so does $F_1\cap F_2$.
  \item If $F_1$ has Banach density one and $F_2$ has positive upper Banach density,
  then $F_1\cap F_2$ has positive upper Banach density.
\end{enumerate}
\end{lem}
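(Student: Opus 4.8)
The plan is to prove all four parts by elementary counting, organized around one reformulation: a set $F\subseteq\Z$ has Banach density one exactly when its complement $\Z\setminus F$ has upper Banach density zero, equivalently $BD_*(F)=1$. Indeed, the defining condition that for every $\lambda<1$ there is an $N$ with $\#(F\cap I)\geq\lambda\#(I)$ whenever $\#(I)\geq N$ says precisely that $\liminf_{\#(I)\to\infty}\#(F\cap I)/\#(I)\geq\lambda$ for all $\lambda<1$, i.e. $BD_*(F)=1$; and since $\#((\Z\setminus F)\cap I)/\#(I)=1-\#(F\cap I)/\#(I)$ for every interval $I\subseteq\Z$, this is the same as $BD^*(\Z\setminus F)=0$. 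This observation is statement~(2) itself, and it will serve as the workhorse for the remaining parts.

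For part~(1) I would argue that translation alters the count on an interval only near its endpoints. Fix $\lambda<1$ and write $I=[a,b]$. For $n\geq 0$ one has $\#((F+n)\cap I)=\#(F\cap[a-n,b-n])$, an interval of the same length as $I$ shifted left, which may protrude below $0$; discarding the at most $n$ lost positions and applying the Banach-density-one estimate for some $\lambda'$ with $\lambda<\lambda'<1$ to the truncated interval yields $\#((F+n)\cap I)\geq\lambda'(\#(I)-n)$, and this exceeds $\lambda\#(I)$ once $\#(I)$ is large enough (depending on $n,\lambda,\lambda'$). The case $n<0$ is easier, since the relevant shifted interval $[a+|n|,b+|n|]$ lies entirely in $\Z$ and has the same length, so the estimate transfers with no loss. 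This boundary effect at the origin is the only point requiring genuine care; everything else is bookkeeping, and I expect it to be the main (mild) obstacle.

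Parts~(3) and~(4) I would deduce from~(2) by inclusion--exclusion on a single interval. For~(3), given $\lambda<1$ set $\mu=(1+\lambda)/2$ and choose $N$ so large that both $F_1$ and $F_2$ capture at least a $\mu$-fraction of every interval of length $\geq N$; then for such an interval $I$ the bound $\#((F_1\cap F_2)\cap I)\geq\#(F_1\cap I)+\#(F_2\cap I)-\#(I)$ gives $\#((F_1\cap F_2)\cap I)\geq(2\mu-1)\#(I)=\lambda\#(I)$, so $F_1\cap F_2$ has Banach density one. For~(4) I would instead use $\#((F_1\cap F_2)\cap I)\geq\#(F_2\cap I)-\#((\Z\setminus F_1)\cap I)$. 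Choosing intervals $I_k$ with $\#(I_k)\to\infty$ along which $\#(F_2\cap I_k)/\#(I_k)\to BD^*(F_2)>0$, and noting $\#((\Z\setminus F_1)\cap I_k)/\#(I_k)\to 0$ by~(2), the ratio $\#((F_1\cap F_2)\cap I_k)/\#(I_k)$ is asymptotically at least $BD^*(F_2)$, whence $BD^*(F_1\cap F_2)\geq BD^*(F_2)>0$. These three parts present no serious difficulty.
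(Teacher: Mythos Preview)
Your proof is correct in all four parts. The paper itself does not provide a proof of this lemma---it merely prefaces the statement with ``It is not difficult to observe the following'' and moves on---so there is no argument to compare against. Your write-up, with the reformulation via $BD^*(\Z\setminus F)=0$ and the inclusion--exclusion counts, is exactly the sort of routine verification the authors had in mind and left to the reader.
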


\subsection{Topological dynamics}
Let $(X,T)$ be a dynamical system.
A non-empty closed invariant subset $Y \subset X$ (i.e.,
$T Y \subset Y$ ) defines naturally a subsystem $(Y , T )$ of $(X, T)$.
A system $(X,T)$ is called {\em minimal} if it contains no proper subsystem.
Each point belonging to some minimal subsystem of $(X,T)$ is called a {\em minimal point}.

The \emph{orbit} of a point $x\in X$ is the set $Orb(x,T)=\{T^nx:\ n\in\Z\}$.
The set of limit points of the orbit $Orb(x,T)$ is called the \emph{$\omega$-limit set}
of $x$, and is denoted by $\omega(x,T)$.
A point $x\in X$ is a \emph{fixed point} if $Tx=x$;
\emph{periodic with least period $n$} if $n$ is the smallest positive integer satisfying $T^n(x) = x$;
\emph{recurrent} if $x\in \omega(x,T)$.

For $x\in X$ and $U,V\subset X$, put $N(x,U)=\{n\in\Z: T^nx\in U\}$ and
$N(U,V)=\{n\in\Z: U\cap T^{-n}V\neq\emptyset\}$.
Recall that a dynamical system $(X,T)$ is called {\em topologically transitive}
(or just {\em transitive}) if for every two non-empty open subsets $U,V$ of $X$
the set $N(U,V)$ is infinite.  Any point with dense orbit is called a \emph{transitive point}.
Denote the set of all transitive points by $Trans(X,T)$.
It is well known that for a transitive system, $Trans(X,T)$ is a dense $G_\delta$ subset of $X$.

A dynamical system $(X,T)$ is called {\em weakly mixing}
if the product system $(X^2, T\times T)$ is transitive;
and {\em strongly mixing} if for every two non-empty open subsets $U,V$ of $X$,
the set $N(U,V)$ is cofinite, i.e., there exists some $N\in \N$
such that $N(U,V)\supset \{N, N+1,\ldots\}$.

Let $(X,T)$ and $(Y,S)$ be two dynamical systems. If there is a continuous surjection
$\pi: X \to Y$ with $\pi\circ T = S\circ \pi$,
then we say that $\pi$ is a \emph{factor map}, the system
$(Y,S)$ is a \emph{factor} of $(X,T)$ or $(X, T)$ is an \emph{extension} of $(Y,S)$.
If $\pi$ is a homeomorphism, then we say that $\pi$ is a \emph{conjugacy} and
dynamical systems $(X,T)$ and $(Y,S)$ are \emph{conjugate}. Conjugate dynamical systems can
be considered the same from the dynamical point of view.

We refer the reader to the textbook~\cite{W82} for basic properties of topological entropy.
\subsection{Symbolic dynamics}

Consider $\{0,1\}$ as a topology space with the discrete topology
and let $\Sigma=\{0,1\}^{\Z}$ with the product topology.
Then $\Sigma$ is a Cantor space.
We write elements of $\Sigma$ as $x=x_0x_1x_2\dotsc$.
The \emph{shift map} $\sigma:\Sigma\to\Sigma$ is defined by
the condition that $\sigma(x)_n=x_{n+1}$ for $n\in\Z$.
It is clear that $\sigma$ is a continuous surjection.
The dynamical system $(\Sigma,\sigma)$ is called the \emph{full shift}.
If $X$ is non-empty, closed, and $\sigma$-invariant (i.e. $\sigma(X)\subset X$),
then the dynamical system $(X, \sigma)$ is called a \emph{subshift}.

Note that there is a natural order on $\{0,1\}$, that is $0\leq 0, 0\leq 1$ and $1\leq 1$.
We can extend this order coordinate-wise to a partial order $\leq $ on $\Sigma$,
that is $x\leq y$ if and only if $x_i\leq y_i$ for all $i\in\Z$.
A subshift $X\subset \Sigma$ is called \emph{hereditary}
provided that for any $x\in X$ if $y\leq x$ then $y\in X$ \cite{KL07,K13}.

Fix a subset $P$ of $\N$. Let $X_P$ be the subset of $\Sigma$ consisting of all
sequences $x$ such that if $x_i=x_j=1$ for some $i,j$ then
$|i-j|\in P\cup\{0\}$.
It is easy to see that $X_P$ is a subshift.
We call $(X_p,\sigma)$ the \emph{spacing subshift} generated by $P$.
The class of spacing shifts was introduced by Lau and Zame in~\cite{LZ72},
and for a detailed exposition of their properties we refer to~\cite{BNOST13}.
It is clear that every spacing subshift is hereditary.

A subset $P$ of $\N$ is said to
be \emph{thick} if it contains arbitrarily long intervals, that is for every $n\in\N$
there exists $a_n\in\N$ such that $\{a_n,a_n+1,\dots,a_n+n\}\subset P$.
It is shown in \cite{LZ72} that a spacing subshift $(X_P,\sigma)$ is weakly mixing if and only if $P$ is a thick set.

\subsection{Banach proximal pairs}
A pair of points $(x,y)\in X^2$ is said to be \emph{Banach proximal}
if for any $\ep>0$, the set $\{n\in\Z:\ d(T^nx,T^ny)<\ep\}$ has Banach density one.
Denote by $BP(X,T)$ the collection of all Banach proximal pairs of $(X,T)$.

\begin{lem}
Let $(X,T)$ be a dynamical system. Then $BP(X,T)$ is an invariant equivalence relation in $X$.
\end{lem}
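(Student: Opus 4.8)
The plan is to verify the three defining properties of an equivalence relation—reflexivity, symmetry, and transitivity—and then to check invariance under $T\times T$. Reflexivity is immediate, since for any $x$ and any $\ep>0$ the set $\{n\in\Z: d(T^nx,T^nx)<\ep\}$ equals all of $\Z$, which trivially has Banach density one. Symmetry is equally routine because the defining condition involves $d(T^nx,T^ny)$, which is symmetric in $x$ and $y$; the set of good times for the pair $(x,y)$ literally equals that for $(y,x)$.

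The transitivity step is where the substance lies, and I expect it to be the main (if mild) obstacle. Suppose $(x,y)$ and $(y,z)$ are both Banach proximal, and fix $\ep>0$. The idea is to use the triangle inequality: if $d(T^nx,T^ny)<\ep/2$ and $d(T^ny,T^nz)<\ep/2$ simultaneously, then $d(T^nx,T^nz)<\ep$. Thus the set $\{n: d(T^nx,T^nz)<\ep\}$ contains the intersection $\{n: d(T^nx,T^ny)<\ep/2\}\cap\{n: d(T^ny,T^nz)<\ep/2\}$. By hypothesis both of these sets have Banach density one, so by Lemma~\ref{lem:BD1}(3) their intersection has Banach density one; since a superset of a Banach-density-one set also has Banach density one, we conclude $(x,z)\in BP(X,T)$. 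The only care needed here is to invoke the correct parts of Lemma~\ref{lem:BD1}, namely that the intersection of two Banach-density-one sets is again of Banach density one.

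Finally, for invariance I must show that $(x,y)\in BP(X,T)$ implies $(Tx,Ty)\in BP(X,T)$. Fix $\ep>0$ and observe that
\[
\{n\in\Z: d(T^n(Tx),T^n(Ty))<\ep\}=\{n\in\Z: d(T^{n+1}x,T^{n+1}y)<\ep\}.
\]
Writing $F=\{k\in\Z: d(T^kx,T^ky)<\ep\}$, the set on the right is, up to the non-negativity constraint, the shift $F-1$. By Lemma~\ref{lem:BD1}(1), translates of Banach-density-one sets again have Banach density one, so the set of good times for $(Tx,Ty)$ has Banach density one as well. I would state invariance only for the forward map $T$ (since $T$ need not be invertible), which is the natural formulation for a general topological dynamical system. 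Assembling reflexivity, symmetry, transitivity, and this $T$-invariance completes the proof that $BP(X,T)$ is an invariant equivalence relation.
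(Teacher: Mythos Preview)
Your proof is correct and follows exactly the approach the paper intends: the paper's own proof is the single sentence ``This follows directly from the definition and Lemma~\ref{lem:BD1},'' and your argument simply unpacks that sentence, using part (3) of Lemma~\ref{lem:BD1} for transitivity and part (1) for invariance.
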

\begin{proof}
This follows directly from the definition and Lemma~\ref{lem:BD1}.
\end{proof}

\begin{lem}
Let $(X,T)$ be a dynamical system and $k\in\mathbb{N}$.
Then $BP(X,T)=BP(X,T^k)$.
\end{lem}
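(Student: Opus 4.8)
The plan is to prove the two inclusions $BP(X,T)\subseteq BP(X,T^k)$ and $BP(X,T^k)\subseteq BP(X,T)$ separately; the case $k=1$ is trivial, so assume $k\geq 2$. For a pair $(x,y)$ and $\ep>0$ write $A_\ep=\{n\in\Z:\ d(T^nx,T^ny)<\ep\}$ for its $T$-proximity set, and note that the corresponding $T^k$-proximity set is exactly $\{n\in\Z:\ kn\in A_\ep\}$, since $(T^k)^n=T^{kn}$. Thus everything reduces to a combinatorial comparison between the Banach density of a set $A\subseteq\Z$ and that of its $k$-fold subsample $\{n:\ kn\in A\}$, together with its residue-shifted versions.

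For the inclusion $BP(X,T)\subseteq BP(X,T^k)$, fix $(x,y)\in BP(X,T)$ and $\ep>0$. Then $A_\ep$ has Banach density one, so $\Z\setminus A_\ep$ has Banach density zero by Lemma~\ref{lem:BD1}(2). I would show $\{n:\ kn\in A_\ep\}$ has Banach density one by a direct interval count: for a subinterval $I=[M,N]$ of length $L$, the map $n\mapsto kn$ carries $I$ injectively into the interval $J=[kM,kN]$ of length $k(L-1)+1$, so $\#\{n\in I:\ kn\notin A_\ep\}\leq \#\bigl((\Z\setminus A_\ep)\cap J\bigr)$. Since $\#(J)\to\infty$ as $L\to\infty$ and $\Z\setminus A_\ep$ has Banach density zero, the right-hand side is at most $\delta\,\#(J)\leq \delta k L$ for $L$ large, and choosing $\delta$ small relative to the desired tolerance gives the claim. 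Note that this direction uses no continuity of $T$.

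The reverse inclusion $BP(X,T^k)\subseteq BP(X,T)$ is the \textbf{main obstacle}, because the hypothesis only controls the times $n\equiv 0\pmod k$. The key step is to spread the hypothesis over all residues using uniform continuity: since $X$ is compact, each $T^j$ is uniformly continuous, so given $\ep>0$ there is $\delta>0$ with $d(a,b)<\delta\Rightarrow d(T^ja,T^jb)<\ep$. Applying this to $a=T^{km}x$, $b=T^{km}y$ and using $(x,y)\in BP(X,T^k)$ shows that for each $j\in\{0,1,\dots,k-1\}$ the set $B_j=\{m\in\Z:\ d(T^{km+j}x,T^{km+j}y)<\ep\}$ contains the Banach-density-one set $\{m:\ d(T^{km}x,T^{km}y)<\delta\}$, hence has Banach density one itself; equivalently, $(T^jx,T^jy)\in BP(X,T^k)$.

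Finally I would reassemble the residue classes. The full $T$-proximity set is $A_\ep=\bigcup_{j=0}^{k-1}(kB_j+j)$, so its complement in any interval $I=[M,N]$ of length $L$ splits, via $n=km+j$, into the index sets $\{m:\ km+j\in I,\ m\notin B_j\}$. The underlying $m$-intervals $I_j=\{m:\ km+j\in I\}$ partition $I$, so $\sum_{j=0}^{k-1}\#(I_j)=L$ and each $\#(I_j)\to\infty$ with $L$; since every $B_j$ has Banach density one, for $L$ large the complement occupies at most a $\delta$-fraction of each $I_j$, giving a total complement count at most $\delta\sum_j\#(I_j)=\delta L$. Hence $A_\ep$ has Banach density one and $(x,y)\in BP(X,T)$. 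I expect the only delicate points to be the uniform-continuity transfer in the third paragraph and the uniform choice of thresholds in the two interval counts; both are routine once the residue-class decomposition is in place.
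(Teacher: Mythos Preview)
Your proof is correct and follows the same overall strategy as the paper's: use uniform continuity of $T^0,\dots,T^{k-1}$ to choose $\delta$ for a given $\ep$, then do an interval count to transfer Banach density one between the $T$-clock and the $T^k$-clock. One small difference worth noting: for the inclusion $BP(X,T)\subseteq BP(X,T^k)$ you argue purely combinatorially (subsampling a Banach-density-one set along $k\Z$ still has Banach density one), whereas the paper uses continuity in that direction too, observing that $n$ in the $\delta$-set $E_1$ forces $\lceil n/k\rceil$ to lie in the $\ep$-set $E_2$. Your route is marginally cleaner there, since it isolates the fact that this inclusion is a statement about sets of integers and needs nothing about $T$. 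For the reverse inclusion the two arguments are essentially identical: the paper notes directly that $kF_1+\{0,\dots,k-1\}\subset F_2$, which is exactly your residue-class reassembly with the intermediate observation about each $B_j$ folded in.
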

\begin{proof}
For every $\ep>0$, choose $\delta>0$ such that $d(a,b)<\delta$ implies $d(T^ia,T^ib)<\ep$ for $i=0,1,\dotsc,k-1$.

Suppose that $(x,y)\in BP(X,T)$.
Let $E_1=\{n\in\Z: d(T^{n}x,T^{n}y)<\delta\}$ and
$E_2=\{n\in\Z: d((T^{k})^nx, (T^{k})^ny)<\ep\}$.
For every $n\in E_1$, we have $\lceil \frac n k \rceil \in E_2$,
where $\lceil z \rceil=\min\,\{n\in\mathbb{Z}: n\ge z\}$.
Since $E_1$ has Banach density one, it is easy to verify that so does $E_2$.
Then $(x,y)\in BP(X,T^k)$.

Now Suppose that $(x,y)\in BP(X,T^k)$.
Let $F_1=\{n\in\Z: d( (T^{k})^nx, (T^{k})^ny)<\delta\}$ and
$F_2=\{n\in\Z: d(T^{n}x,T^{n}y)<\ep\}$.
For every $n\in F_1$, we have $\{kn,kn+1,\dotsc,kn+k-1\}\in F_2$.
Since $F_1$ has Banach density one, it is easy to verify that so does $F_2$.
Then $(x,y)\in BP(X,T)$.
\end{proof}
\begin{lem}
Let $(X,T)$ be a dynamical system. Then $BP(X,T)$ is an $F_{\sigma\delta}$ subset of $X^2$.
\end{lem}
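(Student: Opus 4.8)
The plan is to exhibit $BP(X,T)$ as a countable intersection of $F_\sigma$ sets. The first step is to replace the two universal quantifiers hidden in the definition (over $\ep>0$ and over $\lambda<1$ in the definition of Banach density one) by countable ones, and simultaneously to pass from strict to non-strict inequalities so that the innermost sets come out closed rather than open. Concretely, for $m\in\N$ set $\tilde F_{1/m}(x,y)=\{n\in\Z:\ d(T^nx,T^ny)\le 1/m\}$. Since any superset of a Banach density one set again has Banach density one, one checks directly that $(x,y)\in BP(X,T)$ if and only if $\tilde F_{1/m}(x,y)$ has Banach density one for every $m\in\N$; and, using the criterion for Banach density one recalled in the preliminaries (with $\lambda$ restricted to the values $1-1/k$, which is harmless since $1-1/k\to 1$), the latter holds if and only if for all $m,k\in\N$ there exists $N\ge 1$ with $\#(\tilde F_{1/m}(x,y)\cap I)\ge(1-1/k)\#(I)$ for every subinterval $I$ of $\Z$ with $\#(I)\ge N$.

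The second step is the key closedness observation. Fix $m,k\in\N$ and a finite interval $I$ with $\#(I)=L$, put $j=\lceil(1-1/k)L\rceil$, and consider
\[ C_{m,k,I}=\{(x,y)\in X^2:\ \#(\tilde F_{1/m}(x,y)\cap I)\ge j\}. \]
Since $(x,y)\mapsto d(T^nx,T^ny)$ is continuous, each set $\{(x,y):\ d(T^nx,T^ny)\le 1/m\}$ is closed, and $C_{m,k,I}$ is precisely the union, over the finitely many subsets $S\subset I$ with $\#(S)=j$, of the finite intersections $\bigcap_{n\in S}\{(x,y):\ d(T^nx,T^ny)\le 1/m\}$. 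Hence $C_{m,k,I}$ is a finite union of closed sets, and so is itself closed. This is the one place where working with $\le 1/m$ rather than $<1/m$ is essential: with the strict inequality these sets would be open, and the intersection over infinitely many $I$ in the next step would only produce a $G_\delta$, degrading the final complexity. Getting this inequality bookkeeping right is the main (and really the only) obstacle; everything else is formal.

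Finally I assemble the pieces. For fixed $m,k,N$ the set
\[ B_{m,k,N}=\bigcap_{I:\ \#(I)\ge N} C_{m,k,I} \]
is an intersection of closed sets, hence closed. By the reduction of the first step,
\[ BP(X,T)=\bigcap_{m\in\N}\ \bigcap_{k\in\N}\ \bigcup_{N\ge 1} B_{m,k,N}, \]
so each $\bigcup_{N\ge 1} B_{m,k,N}$ is $F_\sigma$ and $BP(X,T)$, being a countable intersection of $F_\sigma$ sets, is $F_{\sigma\delta}$.
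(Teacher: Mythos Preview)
Your proof is correct and follows essentially the same approach as the paper: both express $BP(X,T)$ as a countable intersection (over the discretized $\ep$ and $\lambda$ parameters) of countable unions (over $N$) of closed sets, the closedness coming from the non-strict inequality $d(T^nx,T^ny)\le 1/m$. The only cosmetic differences are that the paper merges the two parameters by taking $\ep=1/m$ and $\lambda=1-1/m$ simultaneously, and that the paper leaves the verification of closedness implicit whereas you spell out the finite-union-of-finite-intersections argument for $C_{m,k,I}$.
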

\begin{proof}
For every $k\in\N$, let $\mathfrak{A}_k$ be the collection of subinterval of $\Z$ with cardinality greater than $k$.
It is easy to verify that
\begin{align*}
BP(X,T)=\bigcap_{m=2}^\infty
\bigcup_{N=1}^\infty \bigcap_{k=N}^\infty \bigcap_{I\in \mathfrak{A}_k}
\Big\{(x,y)\in X^2: \#\big(\{n\in I: d(T^n(x),T^n(y))\leq \tfrac 1m \}\big)\\
\geq (1-\tfrac 1m) \#(I)\Big\}.
\end{align*}
Then $BP(X,T)$ is $F_{\sigma\delta}$.
\end{proof}

\begin{cor}
Let $(X,T)$ be a dynamical system and $x\in X$.
Then the  Banach proximal cell of $x$, $BP(x)=\{y\in X:\ (x,y)$ is  Banach proximal$\}$,
is an $F_{\sigma\delta}$ subset of $X$.
\end{cor}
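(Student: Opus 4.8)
The plan is to realize $BP(x)$ as a continuous preimage of the relation $BP(X,T)$, and then invoke the preceding lemma together with the standard fact that continuous preimages preserve the Borel class $F_{\sigma\delta}$. This reduces the corollary to a one-line deduction, which is exactly why it is stated as a corollary rather than a theorem.

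First I would fix $x\in X$ and consider the slice map $\iota_x\colon X\to X^2$ given by $\iota_x(y)=(x,y)$. This map is continuous, since its two coordinate functions are the constant map $y\mapsto x$ and the identity $y\mapsto y$. By the very definition of the Banach proximal cell, a point $y$ lies in $BP(x)$ precisely when $(x,y)\in BP(X,T)$, so
\[ BP(x)=\iota_x^{-1}\bigl(BP(X,T)\bigr). \]
Now I would recall that for any continuous map the preimage of a closed set is closed, and that taking preimages commutes with arbitrary unions and intersections; hence the preimage of an $F_\sigma$ set is $F_\sigma$ and the preimage of an $F_{\sigma\delta}$ set is again $F_{\sigma\delta}$. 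Since the previous lemma shows that $BP(X,T)$ is $F_{\sigma\delta}$ in $X^2$, it follows at once that $BP(x)=\iota_x^{-1}(BP(X,T))$ is $F_{\sigma\delta}$ in $X$.

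There is essentially no obstacle here: the only points to check are the continuity of $\iota_x$ and the permanence of the class $F_{\sigma\delta}$ under preimages, both routine. If one preferred a self-contained argument avoiding the abstract permanence property, one could instead substitute the fixed first coordinate $x$ directly into the explicit $F_{\sigma\delta}$ description of $BP(X,T)$ obtained in the proof of the previous lemma. After fixing the first coordinate, each set of the form $\{(x,y): \#(\cdots)\ge(1-\tfrac1m)\#(I)\}$ becomes a closed subset of $X$ (it is a finite union, over the large subsets of the finite interval $I$, of finite intersections of the closed conditions $\{d(T^n x,T^n y)\le\tfrac1m\}$), and the same $\bigcap_m\bigcup_N\bigcap_k\bigcap_I$ structure then exhibits $BP(x)$ as a countable intersection of $F_\sigma$ sets, that is, as $F_{\sigma\delta}$.
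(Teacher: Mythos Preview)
Your proposal is correct and is exactly the intended deduction: the paper states this corollary without proof immediately after the lemma that $BP(X,T)$ is $F_{\sigma\delta}$, and your slice-map argument $BP(x)=\iota_x^{-1}(BP(X,T))$ together with the preservation of $F_{\sigma\delta}$ under continuous preimages is the natural one-line justification it has in mind. The alternative explicit version you sketch at the end is also fine and amounts to the same thing.
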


\section{The support of a dynamical system}

Let $X$ be a compact metric space, $C(X)$ be the Banach algebra of all real valued continuous function on $X$
with the supremum norm, and $C^*(X)$ be the dual space of $C(X)$.
Let $M(X)$ be the set of regular Borel probability measures on $X$.
We can regard $M(X)$ as a subset of $C^*(X)$.
With the weak$^*$-topology induced from $C^*(X)$, $M(X)$ is a compact metrizable convex space.

Let $(X,T)$ be a topological dynamical system.
The action of $T$ on $X$ induces an action of $T_M$ on
$M(X)$ in the following way: for $\mu\in M(X)$ we define $T_M\mu$ by
\[\int_X f(x)\ dT_M\mu(x)= \int_X f(Tx)d\mu(x), \quad \forall f\in C(X).\]
So $T_M$ is the adjoint of the composition linear operator $C_T$ on $C(X)$, that is, $C_Tf=f\circ T$.
Clearly, $T_M$ is a continuous and linear operator. Hence $(M(X),T_M)$ is also a topological dynamical system.
For a point $x\in X$ we denote the \emph{point mass} at $x$ by $\delta_x$.
We may regard $(X,T)$ as a subsystem of $(M(X),T_M)$ by identifying $x\in X$ with $\delta_x\in M(X)$.

We are interested in those members of $M(X)$ that are invariant measures for $T$.
Let $M(X,T)=\{\mu\in M(X): T_M\mu=\mu\}$. This set consists of all
$\mu\in M(X)$ making $T$ a measure-preserving transformation of $(X,\mathcal{B}(X),\mu)$,
where $\mathcal{B}(X)$ is the Borel $\sigma$-algebra of $X$.
By the well known Krylov-Bogolioubov Theorem, $M(X,T)$ is nonempty.
In fact, $M(X,T)$ is a convex compact subset of $M(X)$.

An invariant measure $\mu\in M(X,T)$ is \emph{ergodic} if the only Borel sets $B$ with $T^{-1}(B)=B$
satisfy $\mu(B)=0$ or $\mu(B)=1$.
Let $M^e(X,T)$ denote the set of  ergodic measures in $M(X,T)$.
Then $M^e(X,T)$ coincides with the set of  extreme points of $M(X,T)$.
By the Choquet representation theorem,
for each $\mu\in M(X,T)$, there is a unique measure $\tau$ on the Borel subsets of $M(X,T)$
such that $\tau(M^e(X,T))=1$ and $\mu=\int_{M^e(X,T)}md\tau(m)$.
We call this the \emph{ergodic decomposition} of $\mu$.
A dynamical system $(X,T)$ is called \emph{uniquely ergodic} if $M(X,T)$ is a singleton.

The \emph{support} of a measure $\mu\in M(X)$, denoted by $\supp(\mu)$,
is the smallest closed subset $C$ of $X$ such that $\mu(C)=1$. It is clear that
\begin{align*}
\supp(\mu)&=\{x\in X:\ \text{for every open neiborhood $U$ of $x$, }\mu(U)>0\}\\
&=X\setminus\bigcup\{U\subset X:\ U \text{ is open and }\mu(U)=0\}
\end{align*}
The \emph{support} of a dynamical system $(X,T)$, denoted by $\supp(X,T)$,
is the smallest closed subset $C$ of $X$ such that $\mu(C)=1$ for all $\mu\in M(X,T)$ (see~\cite{S}).
It is clear that
\[\supp(X,T)=\overline{\bigcup\{\supp(\mu):\ \mu\in M(X,T)\}}.\]
Note that the support of a dynamical system is also called the \emph{measure center} in~\cite{Z93}.
\begin{lem}\label{lem:supp-mu-XT}
Let $(X,T)$ be a dynamical system.
Then there exists an invariant measure $\mu\in M(X,T)$ such that $\supp(\mu)=\supp(X,T)$.
\end{lem}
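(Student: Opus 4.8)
The plan is to realize $\mu$ as a countably infinite convex combination of a weak$^*$-dense sequence of invariant measures, and then to identify its support with $\supp(X,T)$ using the description $\supp(X,T)=\overline{\bigcup\{\supp(\nu):\nu\in M(X,T)\}}$ recorded just above the statement. First I would use that $X$ is a compact metric space to conclude that $M(X)$, and hence its weak$^*$-closed subset $M(X,T)$, is compact and metrizable, so in particular separable; fix a countable dense subset $\{\mu_n\}_{n\geq 1}$ of $M(X,T)$ (nonempty by Krylov--Bogolioubov) and set $\mu=\sum_{n=1}^{\infty}2^{-n}\mu_n$. Since $\sum_n 2^{-n}=1$ and each $\mu_n$ is a $T$-invariant probability measure, $\mu$ is again a $T$-invariant probability measure: for every $f\in C(X)$ one has $\int f\,d\mu=\sum_n 2^{-n}\int f\,d\mu_n$, and invariance of each $\mu_n$ gives $\int f\circ T\,d\mu=\int f\,d\mu$. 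Equivalently, $\mu$ is the weak$^*$ limit of the genuine finite convex combinations $\big(\sum_{n=1}^N 2^{-n}\big)^{-1}\sum_{n=1}^N 2^{-n}\mu_n\in M(X,T)$, and $M(X,T)$ is convex and weak$^*$-closed, so $\mu\in M(X,T)$.

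Next I would compute the support of $\mu$. For any open set $U\subset X$ one has $\mu(U)=\sum_n 2^{-n}\mu_n(U)$, and since all coefficients are strictly positive, $\mu(U)=0$ if and only if $\mu_n(U)=0$ for every $n$. Feeding this into the formula $\supp(\mu)=X\setminus\bigcup\{U:\ U\text{ open},\ \mu(U)=0\}$ yields $\supp(\mu)=\overline{\bigcup_{n}\supp(\mu_n)}$.

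It then remains to show $\overline{\bigcup_n\supp(\mu_n)}=\supp(X,T)$. Since each $\mu_n\in M(X,T)$, the inclusion $\subseteq$ is immediate. For the reverse inclusion it suffices to fix $\nu\in M(X,T)$ and show $\supp(\nu)\subseteq\overline{\bigcup_n\supp(\mu_n)}$. Suppose $x\in\supp(\nu)$ but $x\notin\overline{\bigcup_n\supp(\mu_n)}$; then some open $U\ni x$ satisfies $\mu_n(U)=0$ for all $n$, while $\nu(U)>0$. Choosing by density a subsequence $\mu_{n_k}\to\nu$ in the weak$^*$ topology and applying the Portmanteau theorem (lower semicontinuity of measure on open sets under weak$^*$ convergence) gives $0=\liminf_k\mu_{n_k}(U)\geq\nu(U)>0$, a contradiction. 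Combining the two support identities yields $\supp(\mu)=\supp(X,T)$.

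The hard part will be precisely this last step. The naive temptation is to pass supports directly through the weak$^*$ limit, but the map $\nu\mapsto\supp(\nu)$ is not continuous, so that is not available. The resolution is to work instead with the lower semicontinuity of $\nu\mapsto\nu(U)$ for open $U$: it is this one-sided continuity, rather than any continuity of the support itself, that converts weak$^*$-density of $\{\mu_n\}$ into density of $\bigcup_n\supp(\mu_n)$ inside $\supp(X,T)$.
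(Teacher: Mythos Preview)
Your proof is correct. Both your argument and the paper's build $\mu$ as a countable convex combination $\sum_n a_n\mu_n$ of invariant measures and then identify $\supp(\mu)$ with $\supp(X,T)$; the difference is in how the sequence $\{\mu_n\}$ is chosen. The paper uses second countability of $X$: it picks a countable set $\{x_n\}$ dense in $\supp(X,T)$ with each $x_n\in\supp(\mu_n)$ for some $\mu_n\in M(X,T)$, so that immediately $\{x_n\}\subset\supp(\mu)$ and hence $\supp(\mu)\supset\supp(X,T)$, with no further analysis needed. You instead use separability of $M(X,T)$ in the weak$^*$ topology and then invoke the Portmanteau lower-semicontinuity on open sets to push density of $\{\mu_n\}$ in $M(X,T)$ down to density of $\bigcup_n\supp(\mu_n)$ in $\supp(X,T)$. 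The paper's route is slightly more elementary (it avoids Portmanteau entirely), while yours has the mild advantage of making explicit why weak$^*$ density of measures suffices; but the two arguments are close cousins and equally valid.
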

\begin{proof}
By the compactness of $X$, we can choose an at most countable subset $\{x_n:\ n\in\Lambda\}$
of $\bigcup\{\supp(\mu):\ \mu\in M(X,T)\}$
which is dense in $\supp(X,T)$.
For every $n\in\Lambda$, there exists $\mu_n\in M(X,T)$ such that $x_n\in \supp(\mu_n)$.
Choose a sequence of positive real numbers $\{a_n:\ n\in\Lambda\}$ with $\sum_{n\in\Lambda}a_n=1$.
Let $\mu=\sum_{n\in\Lambda}a_n\mu_n$.
It is easy to verify that $\mu\in M(X,T)$ and $\{x_n:\ n\in\Lambda\}\subset\supp(\mu)$.
Then $\supp(\mu)=\supp(X,T)$.
\end{proof}

\begin{rem}
It should be noticed that the measure in Lemma~\ref{lem:supp-mu-XT} may not be ergodic.
For example, let $X=\{a,b\}$ and $T$ is the identity map on $X$.
A more interesting example constructed in~\cite{W71} shows that there exists
a transitive system $(X,T)$ with $\supp(X,T)=X$ but
for every ergodic measure $\mu\in M(X,T)$, $\supp(\mu)\neq\supp(X,T)$.
\end{rem}

\begin{lem} \label{lem:supp-ergodic}
Let $(X,T)$ be a dynamical system. Then
\[\supp(X,T)=\overline{\bigcup\{\supp(\mu):\ \mu\in M^e(X,T)\}}.\]
\end{lem}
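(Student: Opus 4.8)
The plan is to establish the two inclusions separately. One direction is immediate: every ergodic measure is in particular invariant, so $M^e(X,T)\subset M(X,T)$, and hence $\overline{\bigcup\{\supp(\mu):\ \mu\in M^e(X,T)\}}\subset\overline{\bigcup\{\supp(\mu):\ \mu\in M(X,T)\}}=\supp(X,T)$.

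For the reverse inclusion I would reduce to a statement about a single invariant measure. Writing $K=\overline{\bigcup\{\supp(m):\ m\in M^e(X,T)\}}$, this is a closed set, so once I know $\supp(\mu)\subset K$ for every $\mu\in M(X,T)$, taking the union over $\mu$ and then the closure gives $\supp(X,T)=\overline{\bigcup\{\supp(\mu):\ \mu\in M(X,T)\}}\subset K$. The tool for the single-measure statement is the ergodic decomposition recalled above: fix $\mu\in M(X,T)$ and write $\mu=\int_{M^e(X,T)}m\,d\tau(m)$ with $\tau(M^e(X,T))=1$.

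The heart of the argument is a neighborhood-testing computation. Fix $x\in\supp(\mu)$ and let $U$ be any open neighborhood of $x$; by the characterization of the support, $\mu(U)>0$. I would like to pass this through the decomposition in the form $\mu(U)=\int_{M^e(X,T)}m(U)\,d\tau(m)$. This identity is not quite immediate, since the barycenter formula a priori only pairs $\mu$ against continuous functions, so the one genuine technical point is to extend the decomposition from $C(X)$ to the indicator $\mathbf 1_U$ of the open set $U$. I would handle this by approximating $\mathbf 1_U$ from below by an increasing sequence of nonnegative continuous functions (for instance $f_n(y)=\min\{1,n\,d(y,X\setminus U)\}$) and applying the monotone convergence theorem on both sides of the barycenter identity, exchanging the limit with the integral against $\tau$.

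Once the identity $\mu(U)=\int m(U)\,d\tau(m)$ is in hand, the conclusion follows quickly. Since $\mu(U)>0$, the nonnegative integrand $m\mapsto m(U)$ cannot vanish for $\tau$-almost every $m$, so there is at least one $m\in M^e(X,T)$ with $m(U)>0$, that is, $\supp(m)\cap U\neq\emptyset$ and hence $U\cap K\neq\emptyset$. As $U$ ranges over all neighborhoods of $x$, this says precisely that $x\in K$, completing the reverse inclusion. The only real obstacle, and a mild one, is the measure-theoretic extension of the decomposition to indicators of open sets; everything else is formal.
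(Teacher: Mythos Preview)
Your proposal is correct and follows essentially the same route as the paper: both arguments pick $x\in\supp(\mu)$, take a neighborhood $U$ with $\mu(U)>0$, and invoke the ergodic decomposition to produce an ergodic measure giving $U$ positive mass. The paper first invokes Lemma~\ref{lem:supp-mu-XT} to reduce to a single $\mu$ with $\supp(\mu)=\supp(X,T)$, whereas you check every $\mu$; and the paper simply asserts the passage from $\mu(U)>0$ to some ergodic $\nu$ with $\nu(U)>0$, whereas you spell out the monotone-convergence justification---but these are cosmetic differences, not different approaches.
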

\begin{proof}
Choose a $\mu\in M(X,T)$ such that $\supp(\mu)=\supp(X,T)$.
Let $x\in\supp(\mu)$ and $U$ be a neighborhood of $x$.
Then $\mu(U)>0$. By the ergodic decomposition of $\mu$,
there exists some ergodic measure $\nu\in M(X,T)$ such that $\nu(U)>0$.
Then $U\cap\supp(\nu)\neq\emptyset$. Therefore $x\in \overline{\bigcup\{\supp(\mu):\ \mu\in M^e(X,T)\}}$.
\end{proof}

\begin{defn}[\cite{F81}] 
Let $(X,T)$ be a dynamical system, $x\in X$ and $\mu\in M(X,T)$.
We say that $x$ is a \emph{generic point} for $\mu$ if
\[\lim_{n\to\infty}\frac{1}{n}\sum_{i=0}^{n-1}f(T^ix)=\int f d\mu\]
for every continuous function $f\in C(X)$.
\end{defn}

\begin{prop}[\cite{F81}] \label{prop:ergodic-generic} 
Let $(X,T)$ be a dynamical system.
If $\mu\in M(X,T)$ is an ergodic measure, then
almost every point of $X$ (with respect to $\mu$) is generic for $\mu$.
\end{prop}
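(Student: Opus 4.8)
The plan is to derive this from the Birkhoff pointwise ergodic theorem together with the separability of $C(X)$. Recall that the Birkhoff ergodic theorem asserts that for any $f\in L^1(X,\mu)$ the averages $\frac1n\sum_{i=0}^{n-1}f(T^ix)$ converge for $\mu$-almost every $x$ to the conditional expectation of $f$ with respect to the $\sigma$-algebra of $T$-invariant sets; when $\mu$ is ergodic this $\sigma$-algebra is trivial, so the limit is the constant $\int f\,d\mu$. Thus for each \emph{individual} continuous function $f$ there is a set of full $\mu$-measure on which the required convergence holds. The only genuine issue is that the definition of generic point demands convergence for \emph{all} $f\in C(X)$ simultaneously, whereas the full-measure set furnished by Birkhoff a priori depends on $f$.

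To overcome this I would exploit that $X$ is a compact metric space, so $C(X)$ is separable in the supremum norm. Fix a countable dense subset $\{f_k:k\in\N\}$ of $C(X)$. For each $k$ let $X_k$ be the set of those $x$ for which $\frac1n\sum_{i=0}^{n-1}f_k(T^ix)\to\int f_k\,d\mu$; by the ergodic theorem $\mu(X_k)=1$. Setting $X_0=\bigcap_{k\in\N}X_k$, a countable intersection of full-measure sets, we still have $\mu(X_0)=1$, and on $X_0$ the Birkhoff averages converge to the integral simultaneously for every member of the dense family.

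It then remains to upgrade from the dense family to all of $C(X)$ by a uniform-approximation argument. Given an arbitrary $f\in C(X)$, a point $x\in X_0$, and $\ep>0$, choose $f_k$ with $\|f-f_k\|_\infty<\ep$. Since $|f(T^ix)-f_k(T^ix)|<\ep$ for every $i$, the Birkhoff averages of $f$ and of $f_k$ differ by less than $\ep$ for every $n$, and likewise $\bigl|\int f\,d\mu-\int f_k\,d\mu\bigr|\le\ep$. Combining these two estimates with the convergence of the $f_k$-averages on $X_0$ shows that the $\limsup$ and the $\liminf$ of $\frac1n\sum_{i=0}^{n-1}f(T^ix)$ both lie within $2\ep$ of $\int f\,d\mu$; letting $\ep\to 0$ yields convergence to $\int f\,d\mu$. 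Hence every $x\in X_0$ is generic for $\mu$, and $\mu(X_0)=1$, as claimed.

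The main obstacle is precisely the passage from ``one full-measure set per function'' to ``a single full-measure set for all functions''; separability of $C(X)$ is what makes this feasible, and the subsequent approximation step is routine. One could alternatively quote the ergodic theorem directly in this uniform form, but carrying out the separability argument explicitly makes the dependence on $f$ transparent.
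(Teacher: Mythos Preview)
Your argument is correct and is the classical proof: Birkhoff's pointwise ergodic theorem applied to a countable dense subset of $C(X)$, followed by a $\sup$-norm approximation to pass to all continuous functions. Note, however, that the paper does not actually supply a proof of this proposition; it is quoted from Furstenberg~\cite{F81} as a standard result, so there is no ``paper's own proof'' to compare against. What you have written is essentially the argument one finds in Furstenberg (and in Walters~\cite{W82}), so there is nothing to add.
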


\begin{lem}\label{lem:genric-rec}
Let $(X,T)$ be a dynamical system and $\mu\in M(X,T)$.
If a point $y\in \supp(\mu)$ is generic for $\mu$, then
$y$ is recurrent with positive lower density, that is
for every open neighborhood $U$ of $y$, $N(y,U)$ has positive lower density.
\end{lem}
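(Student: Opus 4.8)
The plan is to translate the statement about return times into a statement about Birkhoff averages of (an approximation to) the indicator function $\mathbf{1}_U$, and then invoke genericity. First I observe that directly from the definition of lower density,
\[
\underline{d}\big(N(y,U)\big)=\liminf_{n\to\infty}\frac{1}{n}\sum_{i=0}^{n-1}\mathbf{1}_U(T^iy).
\]
The difficulty is that $\mathbf{1}_U$ is not continuous, so the genericity of $y$ cannot be applied to it directly; I would get around this by squeezing $\mathbf{1}_U$ from below by a suitable continuous function.

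Since $X$ is a compact metric space it is normal, so by Urysohn's lemma I can pick an open set $V$ with $y\in V\subset\overline{V}\subset U$ together with a continuous function $f:X\to[0,1]$ satisfying $f(y)=1$ and $f\equiv 0$ on $X\setminus V$. Then $0\le f\le \mathbf{1}_U$ holds pointwise, whence
\[
\frac{1}{n}\sum_{i=0}^{n-1}\mathbf{1}_U(T^iy)\ge \frac{1}{n}\sum_{i=0}^{n-1}f(T^iy)\xrightarrow{\,n\to\infty\,}\int_X f\,d\mu,
\]
where the convergence on the right is exactly the genericity of $y$ for $\mu$ applied to $f\in C(X)$. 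Combining this with the previous display yields $\underline{d}(N(y,U))\ge \int_X f\,d\mu$.

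It remains to verify that $\int_X f\,d\mu>0$, and this is precisely where the hypothesis $y\in\supp(\mu)$ enters. Because $f$ is continuous with $f(y)=1$, the set $W=\{x\in X: f(x)>\tfrac12\}$ is an open neighborhood of $y$; since $y\in\supp(\mu)$ we have $\mu(W)>0$, and therefore $\int_X f\,d\mu\ge \tfrac12\,\mu(W)>0$. This gives $\underline{d}(N(y,U))\ge \tfrac12\,\mu(W)>0$, establishing positive lower density. Finally, positive lower density forces $N(y,U)$ to be infinite for every open neighborhood $U$ of $y$, so there are arbitrarily large $n$ with $T^n y\in U$; hence $y\in\omega(y,T)$ and $y$ is recurrent.

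The main obstacle is the non-continuity of $\mathbf{1}_U$: the whole argument hinges on choosing the auxiliary function $f$ so that it simultaneously lies below $\mathbf{1}_U$ (to keep the inequality in the direction needed for a lower bound) and has strictly positive integral. The support assumption is what guarantees the latter, and it is the only place it is used.
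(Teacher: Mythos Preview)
Your proof is correct and follows essentially the same route as the paper's: approximate $\mathbf{1}_U$ from below by a continuous bump function $f$ with $f(y)=1$, apply genericity to $f$, and use $y\in\supp(\mu)$ to conclude $\int f\,d\mu>0$. The only difference is cosmetic---you are more explicit (introducing the intermediate $V$ and the set $W=\{f>\tfrac12\}$), whereas the paper simply chooses $f$ vanishing on $U^c$ and asserts $\int f\,d\mu>0$ in one line.
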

\begin{proof}
Fix an open neighborhood $U$ of $y$.
Choose a continuous function $f\in C(X)$ such that $0\leq f(x)\leq 1$ for all $x\in X$, $f(y)=1$
and $f(x)=0$ for $x\in U^c$.
Then
\[\#(N(y,U)\cap[0,n-1]\})\geq \sum_{i=0}^{n-1}f(T^iy) \]
Since $y$ is a generic point, we have
\[\lim_{n\to\infty}\frac{1}{n}\sum_{i=0}^{n-1}f(T^iy)=\int f d\mu>0.\]
Then,
\[\liminf_{n\to\infty}\frac{\#(N(y,U)\cap[0,n-1])}{n}>0,\]
that is $N(y,U)$ has positive lower density.
\end{proof}

\begin{lem}\label{lem:pubd-measure}
Let $(X,T)$ be a dynamical system.
Then for every $x\in X$ and every subset $F$ of $\mathbb{Z}_+$ with positive upper Banach density,
there exists an invariant measure $\mu\in M(X,T)$ with $\mu\bigl(\overline{\{T^nx:\ n\in F\}}\bigr)>0$.
\end{lem}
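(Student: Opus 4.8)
The plan is to realise $\mu$ as a weak$^*$ limit of empirical measures built from the orbit of $x$, following the Krylov--Bogolioubov averaging scheme, but with the averaging intervals chosen using the positive upper Banach density of $F$. Write $\delta=BD^*(F)>0$. By the definition of upper Banach density I can choose intervals $[M_k,N_k]\subset\Z$ with $N_k-M_k\to\infty$ along which
\[
\frac{\#(F\cap[M_k,N_k])}{N_k-M_k+1}\longrightarrow\delta.
\]
For each $k$ I form the empirical measure
\[
\nu_k=\frac{1}{N_k-M_k+1}\sum_{n=M_k}^{N_k}\delta_{T^nx}\in M(X),
\]
where the average is taken over the \emph{whole} interval, not merely over $F$. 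Since $M(X)$ is weak$^*$ compact, after passing to a subsequence I may assume $\nu_k\to\mu$ weak$^*$ for some $\mu\in M(X)$.

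The next step is to verify that $\mu$ is invariant. For any $f\in C(X)$ a telescoping of the two averages gives
\[
\Big|\int f\,d(T_M\nu_k)-\int f\,d\nu_k\Big|
=\frac{|f(T^{N_k+1}x)-f(T^{M_k}x)|}{N_k-M_k+1}
\leq\frac{2\|f\|_\infty}{N_k-M_k+1}\longrightarrow 0 ,
\]
so, letting $k\to\infty$ and using the continuity of $T_M$, I obtain $\int f\,d(T_M\mu)=\int f\,d\mu$ for every $f\in C(X)$; hence $T_M\mu=\mu$. This is precisely why averaging over a full interval (rather than over $F$) is essential: it makes the boundary contributions negligible and forces invariance, whereas averaging only over $F$ need not produce an invariant limit.

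Finally I estimate $\mu(A)$ for the closed set $A=\overline{\{T^nx:\ n\in F\}}$. Whenever $n\in F\cap[M_k,N_k]$ the point $T^nx$ lies in $A$, so each such point contributes full mass to $\nu_k(A)$, giving
\[
\nu_k(A)\geq\frac{\#(F\cap[M_k,N_k])}{N_k-M_k+1},
\]
and therefore $\limsup_k\nu_k(A)\geq\delta$. Because $A$ is closed, the portmanteau theorem yields the upper-semicontinuity bound $\mu(A)\geq\limsup_k\nu_k(A)$, and hence $\mu(A)\geq\delta>0$, as required.

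I expect the only genuine subtlety to be the direction of the portmanteau inequality. For a weak$^*$ convergent sequence one has $\limsup_k\nu_k(C)\leq\mu(C)$ for closed $C$ but only $\liminf_k\nu_k(U)\geq\mu(U)$ for open $U$; here it is crucial that $A$ is closed (being a closure), so the inequality points the correct way and the lower bound $\delta$ on $\nu_k(A)$ transfers to $\mu(A)$. The remaining steps—extracting the intervals from the definition of $BD^*(F)$, and the boundary-term estimate for invariance—are routine.
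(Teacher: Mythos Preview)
Your proof is correct and follows essentially the same route as the paper: average the Dirac masses $\delta_{T^nx}$ over intervals witnessing the positive upper Banach density of $F$, pass to a weak$^*$ limit to obtain an invariant measure, and use the portmanteau inequality for the closed set $A=\overline{\{T^nx:n\in F\}}$ to conclude $\mu(A)>0$. The only cosmetic difference is that the paper phrases the density condition via $N(x,A)\supset F$ and leaves the invariance and portmanteau steps implicit, whereas you spell them out.
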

\begin{proof}
We follow the idea in the proof of Lemma~3.7 in~\cite{F81}.
Let $Y=\overline{\{T^nx:\ n\in F\}}$.
Since $F$ has positive upper Banach density, so does $N(x,Y)$.
Then there exists two sequences $\{a_k\}$ and $\{b_k\}$ of positive integers
with $\lim_{k\to\infty}(b_k-a_k)=\infty$ such that
\[\lim_{k\to\infty} \frac{\#(N(x,Y)\cap[a_k,b_k-1])}{b_k-a_k}>0.\]
Now, set
\[\mu_k=\frac{1}{b_k-a_k}\sum_{i=a_k}^{b_k-1}\delta_{T^ix}.\]
Let $\mu=\lim_{i\to\infty}\mu_{k_i}$ be a limit point of $\{\mu_k\}$ in the weak$^*$-topology.
Clearly, $\mu$ is an invariant measure of $(X,T)$ and
\[\mu(Y)\geq \lim_{i\to\infty}\mu_{k_i}(Y)=
\lim_{i\to\infty}\frac{1}{b_{k_i}-a_{k_i}}\sum_{i=a_{k_i}}^{b_{k_i}-1}\delta_{T^ix}(Y)\]
\[=\lim_{i\to\infty}\frac{\#(N(x,Y)\cap[a_{k_i},b_{k_i}-1])}{b_{k_i}-a_{k_i}}> 0.\]
This ends the proof.
\end{proof}

\begin{prop} \label{prop:supp-dense-rec-pld}
Let $(X,T)$ be a dynamical system.
Then the support of $(X,T)$ is the closure of the set of recurrent points with positive lower density.
\end{prop}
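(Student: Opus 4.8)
The plan is to prove the two inclusions separately. Write $R$ for the set of recurrent points with positive lower density, i.e.\ those $y\in X$ such that $N(y,U)$ has positive lower density for every open neighborhood $U$ of $y$; the goal is to establish $\supp(X,T)=\overline{R}$. The basic characterization I will use throughout is that, by the identity $\supp(X,T)=\overline{\bigcup\{\supp(\mu):\mu\in M(X,T)\}}$, a point $x$ lies in $\supp(X,T)$ if and only if every open neighborhood $U$ of $x$ satisfies $\mu(U)>0$ for some invariant measure $\mu$ (the neighborhood meets some support exactly when it carries positive mass).

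First I would show $\overline{R}\subseteq\supp(X,T)$. Since the support is closed, it suffices to prove $R\subseteq\supp(X,T)$. Fix $y\in R$ and an open neighborhood $U$ of $y$. Using that $X$ is a metric space, choose an open $V$ with $y\in V\subseteq\overline{V}\subseteq U$. By hypothesis $N(y,V)$ has positive lower density, hence positive upper Banach density, because $BD^*(F)\geq\overline{d}(F)\geq\underline{d}(F)$ for any $F\subseteq\Z$. Applying Lemma~\ref{lem:pubd-measure} with $F=N(y,V)$ produces an invariant measure $\mu$ with $\mu\bigl(\overline{\{T^ny:\ n\in N(y,V)\}}\bigr)>0$; since $\{T^ny:\ n\in N(y,V)\}\subseteq V$, the closed set on the left is contained in $\overline{V}\subseteq U$, so $\mu(U)>0$. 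As $U$ was arbitrary, $y\in\supp(X,T)$.

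For the reverse inclusion $\supp(X,T)\subseteq\overline{R}$, I would reduce to ergodic measures via Lemma~\ref{lem:supp-ergodic}, so that it is enough to show $\supp(\mu)\subseteq\overline{R}$ for every ergodic $\mu\in M^e(X,T)$. By Proposition~\ref{prop:ergodic-generic} the set $G$ of generic points for $\mu$ has full $\mu$-measure, and since $\mu(\supp(\mu))=1$ we may intersect to assume $G\subseteq\supp(\mu)$ while keeping $\mu(G)=1$. By Lemma~\ref{lem:genric-rec} every point of $G$ is recurrent with positive lower density, so $G\subseteq R$. Finally, a set of full $\mu$-measure is dense in $\supp(\mu)$: any nonempty relatively open subset of $\supp(\mu)$ has positive measure and hence meets $G$. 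Therefore $\supp(\mu)\subseteq\overline{G}\subseteq\overline{R}$, and taking the union over ergodic $\mu$ together with Lemma~\ref{lem:supp-ergodic} gives $\supp(X,T)\subseteq\overline{R}$.

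The two inclusions combine to the asserted equality. I do not expect a serious obstacle here, as the machinery is already in place; the only points requiring care are the bookkeeping between the density notions (passing from positive lower density to positive upper Banach density so that Lemma~\ref{lem:pubd-measure} applies) and the open-versus-closed neighborhood issue in the first inclusion, which is handled by shrinking $U$ to a $V$ with $\overline{V}\subseteq U$. In the second inclusion the one subtlety is remembering to intersect the generic points with $\supp(\mu)$ before invoking Lemma~\ref{lem:genric-rec}, and to note that a full-measure set is automatically dense in the support.
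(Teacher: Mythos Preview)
Your proof is correct and follows essentially the same approach as the paper: both directions rely on Lemma~\ref{lem:supp-ergodic}, Proposition~\ref{prop:ergodic-generic}, and Lemma~\ref{lem:genric-rec} to show $\supp(X,T)\subseteq\overline{R}$, and on Lemma~\ref{lem:pubd-measure} to show $R\subseteq\supp(X,T)$. The only cosmetic differences are that the paper proves the inclusions in the opposite order and phrases the second as a proof by contradiction, whereas you argue directly and are slightly more explicit about shrinking $U$ to $V$ with $\overline{V}\subseteq U$.
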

\begin{proof}
We first show that $\supp(X,T)$ has dense recurrent points with positive lower density.
By Lemma~\ref{lem:supp-ergodic}, it suffices to show that for every ergodic invariant measure $\mu$,
$\supp(\mu)$ has dense recurrent points with positive lower density.
Let $G_\mu$ denote the collection of generic points with respect to $\mu$.
By Proposition~\ref{prop:ergodic-generic}, $\mu(G_\mu)=1$ and then $\mu(G_\mu\cap\supp(\mu))=1$.
By Lemma~\ref{lem:genric-rec}, every point in $G_\mu\cap\supp(\mu)$ is recurrent with positive lower density.
Clearly, $G_\mu\cap\supp(\mu)$ is dense in $\supp(\mu)$,
so $\supp(\mu)$ has dense recurrent points with positive lower density.

Now suppose that there exists a recurrent point $y\in X$ with positive lower density which is not in $\supp(X,T)$.
Then there exists an neighborhood $U$ of $y$
and an neighborhood $V$ of $\supp(X,T)$ such that $\overline{U}\cap\overline{V}=\emptyset$.
Since $N(y,U)$ has positive lower density, by Lemma~\ref{lem:pubd-measure} there exists
an invariant measure $\mu\in M(X,T)$ such that $\mu(\overline{U})>0$.
Then $\supp(\mu)\cap \overline{U}\neq\emptyset$. This contradicts to the fact $\supp(\mu)\subset \supp(X,T)\subset V$.
\end{proof}

\begin{prop} \label{prop:support-min-BD1}
Let $(X,T)$ be a dynamical system. Then the support of $(X,T)$ is the small closed subset $K$ of $X$ such that
for every $x\in X$ and every open neighborhood $U$ of $K$, $N(x,U)$ has Banach density one.
\end{prop}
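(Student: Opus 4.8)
The plan is to establish the two properties that characterize a smallest set. Writing $S=\supp(X,T)$, I must first check that $S$ itself has the stated property, namely that $N(x,U)$ has Banach density one for every $x\in X$ and every open neighborhood $U$ of $S$; and then check minimality, i.e. that any closed set $K$ enjoying this property satisfies $S\subseteq K$.

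For the first property I would argue by contradiction using Lemma~\ref{lem:pubd-measure}. Suppose some $x\in X$ and some open $U\supseteq S$ are such that $N(x,U)$ does not have Banach density one. By the duality between lower and upper Banach density, its complement $F=\{n\in\Z:\ T^nx\in X\setminus U\}$ fails to have Banach density zero, hence has positive upper Banach density. Since $X\setminus U$ is closed and contains $T^nx$ for all $n\in F$, the set $Y=\overline{\{T^nx:\ n\in F\}}$ is contained in $X\setminus U$. By Lemma~\ref{lem:pubd-measure} there is an invariant measure $\mu\in M(X,T)$ with $\mu(Y)>0$, so $\mu(X\setminus U)>0$ and therefore $\supp(\mu)\not\subseteq U$. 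But $\supp(\mu)\subseteq S\subseteq U$, a contradiction.

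For minimality I would use Proposition~\ref{prop:supp-dense-rec-pld}, which identifies $S$ as the closure of the recurrent points of positive lower density. Let $K$ be any closed set with the stated property and suppose $S\not\subseteq K$. Then $S\setminus K$ is a nonempty relatively open subset of $S$, so by Proposition~\ref{prop:supp-dense-rec-pld} it contains a recurrent point $y$ of positive lower density. Since $K$ is closed and $y\notin K$, choose $r>0$ with $d(y,K)>r$ and set $V=\{z\in X:\ d(z,y)<r/2\}$ and $U=\{z\in X:\ d(z,K)<r/2\}$; then $U$ is an open neighborhood of $K$, $V$ is an open neighborhood of $y$, and $U\cap V=\emptyset$. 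Because $y$ is recurrent with positive lower density, $N(y,V)$ has positive lower density and hence positive upper Banach density. On the other hand, the defining property of $K$ gives that $N(y,U)$ has Banach density one, so $\Z\setminus N(y,U)$ has Banach density zero; and $N(y,V)\subseteq\Z\setminus N(y,U)$ because $U\cap V=\emptyset$. This forces $N(y,V)$ to have upper Banach density zero, contradicting the previous sentence. Hence $S\subseteq K$.

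Both halves are largely mechanical once the right earlier results are invoked, so the main point is simply to see which tool powers each direction: Lemma~\ref{lem:pubd-measure} converts a set of return times of positive upper Banach density into an invariant measure charging the relevant closed set, while Proposition~\ref{prop:supp-dense-rec-pld} supplies, in any relatively open piece of $S$, a recurrent point whose return times have positive lower (hence positive upper Banach) density. The one step needing a moment's care is the elementary inequality $\underline{d}(F)\le BD^*(F)$, which guarantees that positive lower density yields positive upper Banach density and thus lets the density-one hypothesis on $K$ be contradicted.
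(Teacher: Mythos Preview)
Your proof is correct and follows essentially the same approach as the paper. The first half is identical (contradiction via Lemma~\ref{lem:pubd-measure}); for minimality you invoke Proposition~\ref{prop:supp-dense-rec-pld} directly to obtain a recurrent point of positive lower density in $S\setminus K$, whereas the paper instead reconstructs such a point inline by passing to an ergodic measure, a generic point, and Lemma~\ref{lem:genric-rec}---but that is exactly the content of Proposition~\ref{prop:supp-dense-rec-pld}, so the two arguments differ only in packaging.
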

\begin{proof}
We first show that for every $x\in X$ and every open neighborhood $U$ of $\supp(X,T)$, $N(x,U)$ has Banach density one.
Let $U$ be an open neighborhood $U$ of $\supp(X,T)$. If there exists some point $x_0\in X$ such
that $N(x_0,U)$ does not have Banach density one, then $N(x_0, U^c)$ has positive upper Banach density.
By Lemma~\ref{lem:pubd-measure}, there exists an invariant measure $\mu\in M(X,T)$ such that $\mu(U^c)>0$.
Then $\supp(\mu)\cap U^c\neq\emptyset$. This is a contradiction.

Next we shall show that the support of $(X,T)$ is the smallest closed subset satisfying the above property.
Suppose that there exists a closed proper subset $Y$ of $\supp(X,T)$ satisfying the requirement.
Pick a point $y\in \supp(X,T)\setminus Y$. Since $Y$ is closed, there exists an neighborhood $U$ of $y$
and  an neighborhood $V$ of $Y$
such that $\overline{U}\cap \overline{V}=\emptyset$.
Then there exists an ergodic invariant measure  $\mu$ such that $\mu(U)>0$.
Choose a generic point $z\in U\cap \supp(\mu)$ for $\mu$.
By Lemma~\ref{lem:genric-rec}, $N(z,U)$ has positive lower density.
Then the upper density of $N(z,V)$ is less than one.
This is a contradiction.
\end{proof}

Similarly to the proofs of Propositions~\ref{prop:supp-dense-rec-pld} and~\ref{prop:support-min-BD1},
we have the following two results.
\begin{prop}\label{prop:supp-pubd-r}
Let $(X,T)$ be a dynamical system.
Then the support of $(X,T)$ is the closure of the set of recurrent points with positive upper Banach density.
\end{prop}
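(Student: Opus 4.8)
The plan is to prove the two inclusions $\supp(X,T)\subseteq\overline{R}$ and $\overline{R}\subseteq\supp(X,T)$ separately, where $R$ denotes the set of recurrent points with positive upper Banach density. The first inclusion will come essentially for free from Proposition~\ref{prop:supp-dense-rec-pld}, while the second will mirror the contradiction argument used in the second halves of Propositions~\ref{prop:supp-dense-rec-pld} and~\ref{prop:support-min-BD1}, the only new ingredient being Lemma~\ref{lem:pubd-measure}.

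For the inclusion $\supp(X,T)\subseteq\overline{R}$, first I would record the elementary chain $BD^*(F)\geq\overline{d}(F)\geq\underline{d}(F)$ valid for any $F\subseteq\Z$, so that every set of positive lower density automatically has positive upper Banach density. Consequently every recurrent point with positive lower density is also recurrent with positive upper Banach density; that is, writing $R'$ for the set of recurrent points with positive lower density, one has $R'\subseteq R$. Taking closures and invoking Proposition~\ref{prop:supp-dense-rec-pld}, which identifies $\supp(X,T)$ with $\overline{R'}$, yields $\supp(X,T)=\overline{R'}\subseteq\overline{R}$.

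For the reverse inclusion, since $\supp(X,T)$ is closed it suffices to show $R\subseteq\supp(X,T)$, whence $\overline{R}\subseteq\supp(X,T)$. I would argue by contradiction: suppose $y\in R$ but $y\notin\supp(X,T)$. As $\supp(X,T)$ is closed, I can separate $y$ from the support by choosing an open neighborhood $U$ of $y$ and an open neighborhood $V$ of $\supp(X,T)$ with $\overline{U}\cap\overline{V}=\emptyset$. Because $y$ is recurrent with positive upper Banach density, the set $F=N(y,U)$ has positive upper Banach density, and clearly $\overline{\{T^ny:\ n\in F\}}\subseteq\overline{U}$. Lemma~\ref{lem:pubd-measure} then furnishes an invariant measure $\mu\in M(X,T)$ with $\mu(\overline{U})\geq\mu\bigl(\overline{\{T^ny:\ n\in F\}}\bigr)>0$, so $\supp(\mu)\cap\overline{U}\neq\emptyset$. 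This contradicts $\supp(\mu)\subseteq\supp(X,T)\subseteq V$ together with $\overline{U}\cap\overline{V}=\emptyset$, completing the argument.

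I expect the only genuine subtlety to be the correct invocation of Lemma~\ref{lem:pubd-measure}: the lemma produces a measure charging the closure of $\{T^ny:\ n\in F\}$ rather than $\overline{U}$ directly, so one must observe that this closure lies inside $\overline{U}$ for the separating neighborhood $V$ to yield the contradiction. Everything else is a routine assembly of the density comparison with the previously established support results, so no machinery beyond Lemma~\ref{lem:pubd-measure} and Proposition~\ref{prop:supp-dense-rec-pld} should be needed.
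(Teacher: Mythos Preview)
Your proposal is correct and essentially matches the paper's approach: the paper does not spell out a proof but simply says the result follows ``similarly to the proofs of Propositions~\ref{prop:supp-dense-rec-pld} and~\ref{prop:support-min-BD1},'' and your argument is precisely such an adaptation. The only cosmetic difference is that for the inclusion $\supp(X,T)\subseteq\overline{R}$ you invoke Proposition~\ref{prop:supp-dense-rec-pld} together with the density comparison $BD^*\geq\underline{d}$, rather than rerunning the generic-point argument from scratch; this is if anything slightly cleaner than a literal repetition.
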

\begin{prop}
Let $(X,T)$ be a dynamical system. Then the support of $(X,T)$ is the small closed subset $K$ of $X$ such that
for every $x\in X$ and every open neighborhood $U$ of $K$, $N(x,U)$ has upper density one.
\end{prop}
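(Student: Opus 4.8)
The plan is to follow the two-part structure of the proof of Proposition~\ref{prop:support-min-BD1}, replacing ``Banach density one'' by ``upper density one'' throughout. Write $K=\supp(X,T)$. First I would verify that $K$ itself has the stated property, and then that no closed proper subset of $K$ does; together these two facts say exactly that $K$ is the smallest closed set of the required kind.

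For the first part I would simply invoke Proposition~\ref{prop:support-min-BD1}: for every $x\in X$ and every open neighborhood $U$ of $K$ the set $N(x,U)$ already has Banach density one, and a set of Banach density one has density, hence upper density, one. If one instead wants an argument parallel to the earlier proof, I would argue by contradiction: if $N(x_0,U)$ failed to have upper density one for some $x_0$, then $\overline{d}(N(x_0,U))<1$, so $N(x_0,U^c)$ has positive lower density and in particular positive upper Banach density; Lemma~\ref{lem:pubd-measure} then produces $\mu\in M(X,T)$ with $\mu(U^c)>0$, forcing $\supp(\mu)\cap U^c\neq\emptyset$, which contradicts $\supp(\mu)\subset K\subset U$.

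For the minimality, suppose toward a contradiction that some closed proper subset $Y$ of $K$ also has the property. Pick $y\in K\setminus Y$ and, using that $Y$ is closed, choose open neighborhoods $U$ of $y$ and $V$ of $Y$ with $\overline{U}\cap\overline{V}=\emptyset$. Since $y\in K$, Lemma~\ref{lem:supp-ergodic} gives an ergodic $\mu\in M^e(X,T)$ with $\supp(\mu)\cap U\neq\emptyset$, so $\mu(U)>0$; by Proposition~\ref{prop:ergodic-generic} I may choose a generic point $z\in U\cap\supp(\mu)$ for $\mu$. By Lemma~\ref{lem:genric-rec} the set $N(z,U)$ has positive lower density, say $\underline{d}(N(z,U))=c>0$. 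Now comes the key density computation: because $U\cap V=\emptyset$, the sets $N(z,U)$ and $N(z,V)$ are disjoint, so $N(z,V)\subset\Z\setminus N(z,U)$ and hence
\[
\overline{d}(N(z,V))\le \overline{d}\bigl(\Z\setminus N(z,U)\bigr)=1-\underline{d}(N(z,U))=1-c<1.
\]
Since $V$ is a neighborhood of $Y$, this contradicts the assumption that $N(z,V)$ has upper density one.

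I expect no genuine obstacle here: the whole content is the elementary identity $\overline{d}(\Z\setminus F)=1-\underline{d}(F)$ together with the disjointness of $N(z,U)$ and $N(z,V)$, which converts the positive lower density supplied by the generic point $z$ into an upper density strictly below one for the competing neighborhood $V$. The only point deserving a little care is the opening move of the minimality argument, namely extracting an ergodic measure charging $U$ and then a generic point inside $U\cap\supp(\mu)$; this is precisely where Lemma~\ref{lem:supp-ergodic}, Proposition~\ref{prop:ergodic-generic} and Lemma~\ref{lem:genric-rec} combine, and it runs identically to the corresponding step in Proposition~\ref{prop:support-min-BD1}.
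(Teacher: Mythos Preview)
Your proposal is correct and matches the paper's own approach: the paper does not write out a separate proof but simply states that the result follows ``similarly to the proofs of Propositions~\ref{prop:supp-dense-rec-pld} and~\ref{prop:support-min-BD1}'', and what you have written is precisely that adaptation, with the density identity $\overline{d}(\Z\setminus F)=1-\underline{d}(F)$ doing the work in the minimality step. The shortcut you note for the first part (Banach density one from Proposition~\ref{prop:support-min-BD1} already implies upper density one) is perfectly valid and slightly cleaner than rerunning the contradiction argument.
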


\begin{prop}
Let $(X,T)$ be a dynamical system and $\pi:(X,T)\to (Y,S)$ be a factor map. Then
\begin{enumerate}
  \item $\supp(X^2,T\times T)=\supp(X,T)\times\supp(X,T)$,
  \item $\supp(Y,S)=\pi(\supp(X,T))$.
\end{enumerate}
\end{prop}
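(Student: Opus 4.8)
The plan is to work entirely with the measure-theoretic description $\supp(X,T)=\overline{\bigcup\{\supp(\mu):\ \mu\in M(X,T)\}}$, together with two facts I will record along the way. First, for any continuous factor map $\pi$ and any $\mu\in M(X)$ one has $\supp(\pi_*\mu)=\overline{\pi(\supp(\mu))}=\pi(\supp(\mu))$, where the last equality holds because $\supp(\mu)$ is compact and $\pi$ is continuous, so its image is already closed. Second, Lemma~\ref{lem:supp-mu-XT} supplies a single invariant measure whose support is all of $\supp(X,T)$. Together these let me reduce both statements to manipulations with one cleverly chosen measure rather than an entire family.

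For part (1), for the inclusion ``$\subseteq$'' I would note that the two coordinate projections $p_1,p_2:(X^2,T\times T)\to(X,T)$ are factor maps, so for any $\lambda\in M(X^2,T\times T)$ the marginals $(p_1)_*\lambda,(p_2)_*\lambda$ lie in $M(X,T)$; since $\supp(\lambda)\subseteq\supp((p_1)_*\lambda)\times\supp((p_2)_*\lambda)\subseteq\supp(X,T)\times\supp(X,T)$, taking the closure of the union over all such $\lambda$ gives $\supp(X^2,T\times T)\subseteq\supp(X,T)\times\supp(X,T)$. For the reverse inclusion I would invoke Lemma~\ref{lem:supp-mu-XT} to pick $\mu\in M(X,T)$ with $\supp(\mu)=\supp(X,T)$; then $\mu\times\mu\in M(X^2,T\times T)$ and $\supp(\mu\times\mu)=\supp(\mu)\times\supp(\mu)=\supp(X,T)\times\supp(X,T)$, whence $\supp(X,T)\times\supp(X,T)\subseteq\supp(X^2,T\times T)$.

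For part (2), the inclusion $\pi(\supp(X,T))\subseteq\supp(Y,S)$ is immediate from the pushforward formula: choosing $\mu$ with $\supp(\mu)=\supp(X,T)$ gives $\pi(\supp(X,T))=\pi(\supp(\mu))=\supp(\pi_*\mu)\subseteq\supp(Y,S)$, since $\pi_*\mu\in M(Y,S)$. For the reverse inclusion $\supp(Y,S)\subseteq\pi(\supp(X,T))$ I would apply Lemma~\ref{lem:supp-mu-XT} to $(Y,S)$ to obtain $\nu\in M(Y,S)$ with $\supp(\nu)=\supp(Y,S)$, then lift $\nu$ to some $\mu\in M(X,T)$ with $\pi_*\mu=\nu$; the pushforward formula then yields $\supp(Y,S)=\supp(\nu)=\pi(\supp(\mu))\subseteq\pi(\supp(X,T))$.

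The one step needing genuine justification is the surjectivity of $\pi_*:M(X,T)\to M(Y,S)$ used in the last inclusion. I would prove it by the standard averaging argument: since $\pi$ is a continuous surjection, pick any $\lambda\in M(X)$ with $\pi_*\lambda=\nu$, and let $\mu$ be a weak$^*$ limit point of the Ces\`aro averages $\frac1n\sum_{i=0}^{n-1}(T_M)^i\lambda$. Then $\mu$ is $T$-invariant, and because $\pi\circ T=S\circ\pi$ and $\nu$ is $S$-invariant we have $\pi_*(T_M)^i\lambda=(S_M)^i\nu=\nu$ for every $i$, so $\pi_*\mu=\nu$. The pushforward-support formula itself is routine but worth recording, and it is precisely the compactness of $\supp(\mu)$ that lets me drop the closure and present $\pi(\supp(X,T))$ as a genuine image set rather than merely its closure, which is what the statement of the proposition demands.
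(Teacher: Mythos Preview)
Your proof is correct and entirely self-contained, but it takes a genuinely different route from the paper. For the inclusion $\supp(X^2,T\times T)\subseteq\supp(X,T)\times\supp(X,T)$ the paper invokes Proposition~\ref{prop:supp-pubd-r}: any point of $\supp(X^2,T\times T)$ is approximated by points $(x,y)$ that are recurrent with positive upper Banach density in the product, hence each coordinate is recurrent with positive upper Banach density in $(X,T)$ and so lies in $\supp(X,T)$. You instead bound the support of an arbitrary $\lambda\in M(X^2,T\times T)$ by the product of the supports of its marginals, which is a purely measure-theoretic argument requiring none of the recurrence machinery of Section~3. For part~(2) the contrast is sharper: the paper again uses Proposition~\ref{prop:supp-pubd-r} together with an external lifting result (Proposition~4.5 of~\cite{Li12}) asserting that a recurrent point with positive upper Banach density in the factor lifts to one in the extension, whereas you lift at the level of invariant measures via the standard Krylov--Bogolioubov averaging trick and the pushforward-support identity $\supp(\pi_*\mu)=\pi(\supp\mu)$. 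Your approach is more elementary and avoids the dependence on~\cite{Li12}; the paper's approach, on the other hand, illustrates that the recurrence characterizations developed in Section~3 are strong enough to recover these structural facts, which fits the narrative of the section.
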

\begin{proof}
(1) Let $\mu\in M(X,T)$ with $\supp(\mu)=\supp(X,T)$.
Clearly, $\mu\times\mu\in M(X^2,T\times T)$.
Then $\supp(X^2,T\times T)\supset \supp(\mu\times\mu)=\supp(X,T)\times\supp(X,T)$.
By Proposition~\ref{prop:supp-pubd-r}, the set of recurrent points of $(X^2,T\times T)$
with positive upper Banach density is dense in $\supp(X^2,T\times T)$.
Note if $(x,y)\in X^2$ is recurrent with positive upper Banach density in $(X^2,T\times T)$,
then $x$ and $y$ are recurrent with positive upper Banach density in $(X,T)$.
By Proposition~\ref{prop:supp-pubd-r} again, $(x,y)\in\supp(X,T)\times\supp(X,T)$.
Thus $\supp(X^2,T\times T)\subset\supp(X,T)\times\supp(X,T)$.

(2) This follows from Proposition~\ref{prop:supp-pubd-r} and the fact that
if $y\in Y$ is recurrent with positive upper Banach density then there exists $x\in X$ with $\pi(x)=y$
such that $x$ is also recurrent with positive upper Banach density (see Proposition 4.5 in~\cite{Li12}).
\end{proof}

Recall that a dynamical system $(X,T)$ is called an \emph{E-system} if it is transitive and $\supp(X,T)=X$.

\begin{prop}[\cite{HPY07}]
Let $(X,T)$ be a transitive system and $x$ be a transitive point.
Then $(X,T)$ is an E-system if and only if for every non-empty open subset $U$ of $X$,
$N(x,U)$ has positive upper Banach density.
\end{prop}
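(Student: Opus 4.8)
The statement is a biconditional whose two implications have rather different characters, so the plan is to treat them separately. Recall that an E-system is by definition transitive together with $\supp(X,T)=X$; since transitivity of $(X,T)$ is assumed throughout, the real content is to match the condition $\supp(X,T)=X$ with the density condition on $N(x,U)$. The two earlier results I expect to lean on are Lemma~\ref{lem:pubd-measure} (positive upper Banach density of a visiting set produces an invariant measure charging the corresponding closed set) and Proposition~\ref{prop:supp-pubd-r} (the support is the closure of the recurrent points with positive upper Banach density).

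The implication from the density condition to being an E-system is the softer one. Assuming $N(x,U)$ has positive upper Banach density for every non-empty open $U$, I would fix an arbitrary non-empty open $U$, shrink it to a non-empty open $V$ with $\overline{V}\subset U$, and apply Lemma~\ref{lem:pubd-measure} with $F=N(x,V)$. Since $T^nx\in V$ for every $n\in F$, the closed set $\overline{\{T^nx:\ n\in F\}}$ is contained in $\overline{V}\subset U$, so the resulting invariant measure $\mu$ satisfies $\mu(U)\ge\mu(\overline{V})>0$, whence $\supp(\mu)\cap U\neq\emptyset$ and therefore $\supp(X,T)\cap U\neq\emptyset$. As $U$ was arbitrary, $\supp(X,T)$ is dense and closed, hence equals $X$; combined with the standing transitivity this makes $(X,T)$ an E-system.

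The reverse implication is where the transitive point is essential. Assuming $\supp(X,T)=X$, fix a non-empty open $U$. By Proposition~\ref{prop:supp-pubd-r} the recurrent points with positive upper Banach density are dense in $X=\supp(X,T)$, so I can pick such a point $y\in U$; then $U$ is a neighbourhood of $y$, so $N(y,U)$ itself has positive upper Banach density, realised along intervals $[a_k,b_k)$ with $b_k-a_k\to\infty$ on each of which the orbit of $y$ spends at least a fixed fraction $c>0$ of its time in $U$. The plan is to transfer this behaviour from $y$ to the transitive point $x$. For each $k$, continuity of the finitely many maps $T^i$ with $a_k\le i<b_k$, together with openness of $U$, yields a neighbourhood $W_k$ of $y$ such that every $w\in W_k$ still satisfies $T^iw\in U$ for each of the $\ge c(b_k-a_k)$ indices $i\in[a_k,b_k)$ with $T^iy\in U$. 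Since $x$ is a transitive point its orbit is dense, so there is $m_k$ with $T^{m_k}x\in W_k$; then $N(x,U)$ meets the interval $[a_k+m_k,b_k+m_k)$ in at least $c(b_k-a_k)$ points, and as these lengths tend to infinity this forces $BD^{*}(N(x,U))\ge c>0$.

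The main obstacle, and the only genuinely non-formal point, is exactly this transfer step: positive upper Banach density lives on arbitrarily long but finite windows, and one cannot in general compare the full orbits of two nearby points. The resolution is that on each fixed finite window only finitely many iterates are involved, so uniform continuity there lets a single nearby visit of the transitive orbit reproduce the entire pattern of returns to $U$, while density of the transitive orbit supplies such a nearby visit for every window. The one piece of bookkeeping I would verify carefully is that the shifted intervals $[a_k+m_k,b_k+m_k)$ keep length $b_k-a_k$ and the same return count; this is immediate, but it is where an off-by-one slip could hide.
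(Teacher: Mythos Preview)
Your proof is correct. Note that the paper does not actually prove this proposition---it is cited from \cite{HPY07}---but immediately afterwards the paper proves the more general Proposition~\ref{prop:orbit-ubd}, and the argument there is essentially the one you give: Lemma~\ref{lem:pubd-measure} for the $(\Leftarrow)$ direction, and for $(\Rightarrow)$ a transfer of finite return-windows from a well-behaved point to the transitive point via continuity and density of the orbit. The only cosmetic difference is that the paper locates the auxiliary point $y$ as a generic point for an ergodic measure (so $N(y,U)$ has positive \emph{lower} density via Lemma~\ref{lem:genric-rec}), whereas you invoke Proposition~\ref{prop:supp-pubd-r} to get a point recurrent with positive upper Banach density; either choice feeds into the identical transfer step.
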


If fact, we have the following characterization of the support of the closure of a orbit.
\begin{prop}\label{prop:orbit-ubd}
Let $(X,T)$ be a dynamical system and $x\in X$.
Then a point $y\in X$ is in the support of $(\overline{Orb(x,T)},T)$
if and only if
for every open neighborhood $U$ of $y$, $N(x,U)$ has positive upper Banach density.
\end{prop}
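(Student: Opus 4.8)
The plan is to carry out the whole argument inside the subsystem $(Z,T)$, where $Z=\overline{Orb(x,T)}$ is the closed invariant set containing $x$ (note $x=T^0x\in Orb(x,T)$), and to apply the support machinery of this section to $(Z,T)$. The useful reformulation is that, since $\supp(Z,T)=\overline{\bigcup\{\supp(\mu):\mu\in M(Z,T)\}}$, a point $y$ lies in $\supp(Z,T)$ if and only if every open neighborhood of $y$ receives positive mass from some invariant measure carried by $Z$. I would prove the two implications separately; the backward one is a direct application of Lemma~\ref{lem:pubd-measure}, while the forward one is the substantial part.

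For the backward direction, assume $N(x,U)$ has positive upper Banach density for every open neighborhood $U$ of $y$. Given such a $U$, set $F=N(x,U)$ and apply Lemma~\ref{lem:pubd-measure} to the system $(Z,T)$ and the point $x\in Z$: this produces $\mu\in M(Z,T)$ with $\mu\bigl(\overline{\{T^nx:n\in F\}}\bigr)>0$. Since $\{T^nx:n\in F\}\subset U$, we get $\mu(\overline U)>0$, hence $\overline U\cap\supp(\mu)\neq\emptyset$ and thus $\overline U\cap\supp(Z,T)\neq\emptyset$. Applying this to the shrinking neighborhoods $U=B(y,1/m)$ yields points of $\supp(Z,T)$ converging to $y$, so $y\in\supp(Z,T)$ because the support is closed.

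The forward direction is where the real work lies. Assume $y\in\supp(Z,T)$ and fix an open neighborhood $U$ of $y$; choose an open $V\ni y$ and $\eta>0$ with $B(\overline V,\eta)\subset U$. By Lemma~\ref{lem:supp-ergodic} applied to $(Z,T)$, the neighborhood $V$ meets $\supp(m)$ for some ergodic $m\in M^e(Z,T)$, whence $c:=m(V)>0$. By Proposition~\ref{prop:ergodic-generic} I can pick a point $z\in Z$ generic for $m$; using inner regularity to find a continuous $g$ with $0\le g\le\mathbf 1_V$ and $\int g\,dm>c/2$, genericity gives $\frac1n\sum_{i=0}^{n-1}\mathbf 1_V(T^iz)\ge\frac1n\sum_{i=0}^{n-1}g(T^iz)\to\int g\,dm>c/2$, so for all large $n$ the orbit of $z$ visits $V$ at least $\tfrac c3\,n$ times in $[0,n)$. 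The crux is to transfer this visiting frequency from $z$ to $x$. Since $z\in\overline{Orb(x,T)}$, there are times $n_k$ with $T^{n_k}x\to z$; fixing a window length $n$, uniform continuity of $T^0,\dots,T^{n-1}$ furnishes $\delta>0$ so that $d(T^{n_k}x,z)<\delta$ forces $d(T^{i+n_k}x,T^iz)<\eta$ for $0\le i<n$. Then every visit $T^iz\in V$ forces $T^{i+n_k}x\in B(\overline V,\eta)\subset U$, so $\#\bigl(N(x,U)\cap[n_k,n_k+n)\bigr)\ge\tfrac c3\,n$. Letting $n\to\infty$ produces intervals of unbounded length on which $N(x,U)$ has density at least $c/3$, i.e. $N(x,U)$ has positive upper Banach density.

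The main obstacle I expect is precisely this last transfer step: an invariant measure on the orbit closure need not be a limit of Cesàro averages along the orbit of $x$ itself, so one cannot simply read off $x$'s visiting behavior from $\mu$. The shadowing argument above circumvents this by using only that a generic point $z$ of an ergodic component lies in $\overline{Orb(x,T)}$ and that finite orbit segments of $x$ approximate those of $z$ arbitrarily well over arbitrarily long windows, which is exactly what positive \emph{upper Banach} density (as opposed to positive density along $[0,n)$) is able to detect.
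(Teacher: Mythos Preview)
Your proof is correct and follows essentially the same route as the paper. Both directions match: the backward direction is Lemma~\ref{lem:pubd-measure} plus shrinking neighborhoods, and the forward direction proceeds by finding an ergodic measure giving mass to a neighborhood of $y$, picking a generic point $z\in\overline{Orb(x,T)}$ whose orbit hits that neighborhood with positive lower density, and then shadowing finite orbit segments of $z$ by those of $x$ via continuity of finitely many iterates. The only cosmetic difference is that the paper selects the generic point $z$ inside $U\cap\supp(\mu)$ and invokes Lemma~\ref{lem:genric-rec} directly, whereas you keep $z$ arbitrary and use a buffer neighborhood $V$ with $B(\overline V,\eta)\subset U$; both devices serve the same purpose in the transfer step.
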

\begin{proof}
Without loss of generality, we can require $\overline{Orb(x,T)}=X$. First assume that
 $y\in\supp(\overline{Orb(x,T)},T)$.
Let $U$ be a neighborhood of $y$.
Then there exists an ergodic invariant measure $\mu\in M(X,T)$ such that $\mu(U)>0$.
Choose a generic point $z\in U\cap \supp(\mu)$ for $\mu$.
Then by Lemma~\ref{lem:genric-rec}, $N(z,U)$ has positive lower density.
For a finite subset $W$ of $N(z,U)$,
by the continuity of $T$ there exists an open neighborhood $z$ of $V$ such that
$T^n(V)\subset U$ for every $n\in W$.
Since $z\in \overline{Orb(x,T)}$, there is $k\in\Z$ such that $T^kx\in V$.
This implies that $k+W\subset N(x,U)$, and then $N(x,U)$ has positive upper Banach density.

Now assume that for every open neighborhood $U$ of $y$, $N(x,U)$ has positive upper Banach density.
By Lemma~\ref{lem:pubd-measure}, there exists an invariant measure $\mu\in M(X,T)$ such that $\mu(\overline{U})>0$.
Then $\overline{U}\cap \supp(\mu)\neq\emptyset$, and thus $y\in \supp(X,T)$.
\end{proof}

\section{The structure of Banach proximal relation}
In this section, we study the structure of the Banach proximal relation.
First using the support of the orbit closure,
we have the following equivalent conditions of Banach proximal pairs.
\begin{prop}\label{prop:Banach-proximal-equi}
Let $(X,T)$ be a dynamical system, $(x,y)\in X^2$ and let $\Delta_X$ denote the diagonal of $X^2$.
Then the following conditions are equivalent:
\begin{enumerate}
  \item $(x,y)$ is Banach proximal;
  \item $\supp\bigl(\overline{Orb((x,y),T\times T)},T\times T\bigr)\subset\Delta_X$;
  \item $\overline{Orb((x,y),T\times T)}\subset BP(X,T)$.
\end{enumerate}
\end{prop}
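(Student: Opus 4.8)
The plan is to prove the cycle $(1)\Rightarrow(2)\Rightarrow(3)\Rightarrow(1)$, using Proposition~\ref{prop:orbit-ubd} as the bridge between the dynamical notion of Banach proximality and the support of the orbit closure. Throughout, write $Z=\overline{Orb((x,y),T\times T)}$ for the orbit closure of the pair in the product system, and let $\pi_1,\pi_2$ denote the two coordinate projections. The key observation I would exploit is that Banach proximality is, by definition, a statement about the Banach density of a set of times, and Proposition~\ref{prop:orbit-ubd} translates "positive upper Banach density of visiting times" into "membership in the support."

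For $(1)\Rightarrow(2)$, I would argue by contraposition. Suppose $\supp(Z,T\times T)\not\subset\Delta_X$, so there is a point $(u,v)\in\supp(Z,T\times T)$ with $u\neq v$. Pick $\ep>0$ with $d(u,v)>2\ep$ and take a product neighborhood $U$ of $(u,v)$ in $Z$ on which the two coordinates stay more than $\ep$ apart. By Proposition~\ref{prop:orbit-ubd} applied to the point $(x,y)$ in the system $(Z,T\times T)$, the set $N((x,y),U)$ has positive upper Banach density. But every $n$ in this set satisfies $d(T^nx,T^ny)>\ep$, so $\{n:d(T^nx,T^ny)<\ep\}$ omits a set of positive upper Banach density and hence does not have Banach density one, contradicting $(1)$.

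For $(3)\Rightarrow(1)$, the point is simply that $(x,y)\in Orb((x,y),T\times T)\subset\overline{Orb((x,y),T\times T)}\subset BP(X,T)$, so $(x,y)$ is itself a Banach proximal pair. The remaining implication $(2)\Rightarrow(3)$ is where I expect the real work, and I would handle it as follows. Take any $(u,v)\in Z$; I must show $(u,v)\in BP(X,T)$. Since $BP$ is an invariant equivalence relation (stated earlier) and is given by a countable intersection of closed-type conditions, it would suffice to know $BP(X,T)$ is closed on $Z$ — but $BP$ need not be closed in general, so I would instead argue directly via supports. The clean route is to note that $\overline{Orb((u,v),T\times T)}\subset Z$, hence $\supp(\overline{Orb((u,v),T\times T)},T\times T)\subset\supp(Z,T\times T)\subset\Delta_X$ by monotonicity of the support under passing to subsystems; then applying the already-established implication $(2)\Rightarrow(1)$ to the pair $(u,v)$ yields $(u,v)\in BP(X,T)$.

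\textbf{The main obstacle} is verifying that the support is monotone in the sense $\supp(\overline{Orb((u,v))},T\times T)\subset\supp(Z,T\times T)$ for $(u,v)\in Z$; this is not automatic from the definition, since a subsystem may carry invariant measures whose supports are constrained to the subsystem, yet those are still invariant measures of the ambient $(Z,T\times T)$, so their supports lie in $\supp(Z,T\times T)$. I would pin this down by characterizing $\supp(Z,T\times T)$ via Proposition~\ref{prop:supp-pubd-r} as the closure of recurrent points with positive upper Banach density: any such recurrent point of the smaller orbit closure is a recurrent point of $Z$ with the same property. Once this monotonicity is secured, the cycle closes and the equivalence follows.
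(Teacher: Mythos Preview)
Your argument has a genuine circularity. You announce a cycle $(1)\Rightarrow(2)\Rightarrow(3)\Rightarrow(1)$, but inside the step $(2)\Rightarrow(3)$ you write ``applying the already-established implication $(2)\Rightarrow(1)$ to the pair $(u,v)$.'' At that stage of the cycle only $(1)\Rightarrow(2)$ has been proved; $(2)\Rightarrow(1)$ would follow only \emph{after} the full cycle $(2)\Rightarrow(3)\Rightarrow(1)$ is complete, so you cannot invoke it here. The fix is immediate and is exactly what the paper does: once you have $\supp\bigl(\overline{Orb((u,v),T\times T)},T\times T\bigr)\subset\Delta_X$, apply Proposition~\ref{prop:support-min-BD1} directly in the system $\overline{Orb((u,v),T\times T)}$ to conclude that for every open neighborhood $U$ of $\Delta_X$ the set $\{n:(T\times T)^n(u,v)\in U\}$ has Banach density one, i.e.\ $(u,v)\in BP(X,T)$. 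Alternatively, you could restructure and first prove $(1)\Leftrightarrow(2)$ as a stand-alone equivalence before treating $(3)$.

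Two smaller remarks. Your $(1)\Rightarrow(2)$ by contraposition via Proposition~\ref{prop:orbit-ubd} is correct and is essentially the same as the paper's direct argument, just read in the opposite direction. And the ``main obstacle'' you flag---monotonicity of the support under passing to subsystems---is not an obstacle at all: your own parenthetical justification (every invariant measure on the subsystem is an invariant measure on the ambient system, so its support lies in the ambient support) is already a complete proof, and the paper uses this inclusion without comment. You spent your worry on the wrong place; the actual gap is the circular appeal to $(2)\Rightarrow(1)$.
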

\begin{proof}
(1) $\Rightarrow$ (2) By the definition of Banach proximality,
for every open neighborhood $U$ of $\Delta_X$, $\{n\in\Z:\ (T\times T)^n(x,y)\in U\}$ has Banach density one.
Then by Proposition~\ref{prop:orbit-ubd},
$\supp\bigl(\overline{Orb((x,y),T\times T)},T\times T\bigr) \subset \Delta_X$.

(2) $\Rightarrow$ (3) Let $(u,v)\in \overline{Orb((x,y),T\times T)}$. Then
 $\supp\bigl(\overline{Orb((u,v),T\times T)},T\times T\bigr)\subset
\supp\bigl(\overline{Orb((x,y),T\times T)},T\times T\bigr)\subset\Delta_X$.
By Proposition~\ref{prop:support-min-BD1} for every open neighborhood $U$ of $\Delta_X$,
we have $\{n\in\Z:\ (T\times T)^n(u,v)\in U\}$ has Banach density one.
Then $(u,v)$ is Banach proximal.

(3) $\Rightarrow$ (1) is obvious.
\end{proof}

\begin{prop} \label{prop:BP-supp}
Let $(X,T)$ be a dynamical system.
If $(x,y)\in X^2$ is Banach proximal,
then \[\supp\bigl(\overline{Orb(x,T)},T\bigr)=\supp\bigl(\overline{Orb(y,T)},T\bigr)\]and
\[\supp\bigl(\overline{Orb((x,y),T\times T)},T\times T\bigr)
=\bigl\{(z,z):\ z\in \supp\bigl(\overline{Orb(x,T)},T\bigr)\bigr\}.\]
\end{prop}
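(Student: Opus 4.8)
The plan is to realize the two coordinate projections as factor maps, transport the support of the product orbit closure along them, and then use the Banach proximality hypothesis to collapse everything onto the diagonal.

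First I would set $Z=\overline{Orb((x,y),T\times T)}$ and let $\pi_1,\pi_2\colon Z\to X$ be the restrictions to $Z$ of the two coordinate projections. The routine but essential preliminary is that $\pi_1(Z)=\overline{Orb(x,T)}$ and $\pi_2(Z)=\overline{Orb(y,T)}$, which follows from compactness of $Z$ and continuity of the $\pi_i$; consequently $\pi_1\colon(Z,T\times T)\to(\overline{Orb(x,T)},T)$ and $\pi_2\colon(Z,T\times T)\to(\overline{Orb(y,T)},T)$ are genuine factor maps. Applying the earlier proposition stating that $\supp(Y,S)=\pi(\supp(X,T))$ for any factor map $\pi$, with the support computed inside each target subsystem, gives
\[\supp\bigl(\overline{Orb(x,T)},T\bigr)=\pi_1\bigl(\supp(Z,T\times T)\bigr),\qquad \supp\bigl(\overline{Orb(y,T)},T\bigr)=\pi_2\bigl(\supp(Z,T\times T)\bigr).\]

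Next I would feed in the hypothesis. Since $(x,y)$ is Banach proximal, Proposition~\ref{prop:Banach-proximal-equi} gives $\supp(Z,T\times T)\subset\Delta_X$; abbreviate $K=\supp(Z,T\times T)$. Every point of $K$ then has the form $(z,z)$, so $\pi_1(K)=\pi_2(K)=\{z\in X:(z,z)\in K\}$. The equality $\pi_1(K)=\pi_2(K)$ is precisely statement~(1). For statement~(2), since $K$ lies on the diagonal it is recovered from its common projection, $K=\{(z,z):z\in\pi_1(K)\}=\{(z,z):z\in\supp(\overline{Orb(x,T)},T)\}$, which is the asserted description.

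I do not expect a serious obstacle, as the substance is already packaged in Proposition~\ref{prop:Banach-proximal-equi} and in the factor-map proposition; the one point needing care is checking that the coordinate projections restrict to \emph{surjective} factor maps onto the individual orbit closures, with supports taken inside those subsystems, so that the factor-map identity applies verbatim. A more hands-on alternative would avoid the factor-map proposition and argue directly from Proposition~\ref{prop:orbit-ubd}: a point $z$ lies in $\supp(\overline{Orb(x,T)},T)$ iff $N(x,U)$ has positive upper Banach density for every neighborhood $U$ of $z$, and one matches neighborhoods of $z$ with product neighborhoods of $(z,z)$ to transfer this positivity between the one- and two-coordinate systems; but the factor-map route is cleaner and I would present that.
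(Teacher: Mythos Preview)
Your argument is correct, but it differs from the paper's in a meaningful way. The paper proves statement~(1) directly from Proposition~\ref{prop:orbit-ubd}: for $z\in\supp(\overline{Orb(x,T)},T)$ and a neighborhood $U$ of $z$, one picks $\ep>0$ with $B(z,2\ep)\subset U$, notes that $N(x,B(z,\ep))$ has positive upper Banach density, intersects with the Banach-density-one set $\{n:d(T^nx,T^ny)<\ep\}$ to obtain that $N(y,U)$ has positive upper Banach density, and concludes $z\in\supp(\overline{Orb(y,T)},T)$. Statement~(2) is then handled by a similar hitting-time computation for the pair $(x,y)$ together with the inclusion from Proposition~\ref{prop:Banach-proximal-equi}. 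Your approach instead packages everything through the factor-map identity $\supp(Y,S)=\pi(\supp(X,T))$: once you check the coordinate projections are surjective onto the individual orbit closures, both statements fall out immediately from the fact that the support of $Z$ sits on the diagonal. Your route is shorter and more conceptual; the paper's route is more self-contained and, amusingly, is exactly the ``hands-on alternative'' you sketch at the end and set aside.
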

\begin{proof}
Let $z\in \supp\big(\overline{Orb(x,T)},T\big)$ and $U$ be an open neighborhood of $z$.
Then there exists $\ep>0$ such that $B(z,2\ep)\subset U$.
By~Proposition~\ref{prop:orbit-ubd}, $N(x,B(z,\ep))$ has positive upper Banach density.
Since $(x,y)$ is Banach proximal, the set $F=\{n\in\Z: d(T^nx,T^ny)<\ep\}$ has Banach density one.
Then $N(y,U)$ has positive upper Banach density since $N(x,B(z,\ep))\cap F\subset N(y,U)$.
Therefore $z\in \supp\big(\overline{Orb(y,T)},T\big)$ and
$\supp\bigl(\overline{Orb(x,T)},T\bigr)\subset \supp\bigl(\overline{Orb(y,T)},T\bigr)$.
By the symmetry of $x$ and $y$, we have $\supp\bigl(\overline{Orb(x,T)},T\bigr)=\supp\bigl(\overline{Orb(y,T)},T\bigr)$.

Fix $a\in \supp\bigl(\overline{Orb(x,T)},T\bigr)$.
For every open neighborhood $U$ of $a$, by the proof above we have that
$N((x,y),U\times U)=N(x,U)\cap N(y,U)$ has positive upper Banach density.
Then $(a,a)\in \supp\bigl(\overline{Orb((x,y),T\times T)},T\times T\bigr)$.
Now fix $(b,c)\in \supp\bigl(\overline{Orb((x,y),T\times T)},T\times T\bigr)$.
Clearly, $b\in \supp\bigl(\overline{Orb(x,T)},T\bigr)$ and $c\in \supp\bigl(\overline{Orb(y,T)},T\bigr)$.
By Proposition~\ref{prop:Banach-proximal-equi},
$(b,c)\in \Delta_X$, that is $b=c$. This ends the proof.
\end{proof}

The collection of fixed points in $(X,T)$ is denoted by $Fix(X,T)$.

\begin{cor}\label{cor:xTnx-BP}
Let $(X,T)$ be a dynamical system, $x\in X$ and $n\in\N$.
If $(x,T^nx)$ is Banach proximal, then $\supp(\overline{Orb(x,T)},T)=Fix(\overline{Orb(x,T)},T^n)$.
\end{cor}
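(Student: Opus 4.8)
The plan is to apply Proposition~\ref{prop:BP-supp} with $y = T^n x$, and then identify the support of the orbit closure of the pair $(x, T^n x)$ under $T \times T$ with the fixed-point set of $T^n$ restricted to $\overline{Orb(x,T)}$. First I would set $K = \supp(\overline{Orb(x,T)},T)$ and observe that since $(x, T^n x)$ is Banach proximal, Proposition~\ref{prop:BP-supp} gives immediately that $\supp(\overline{Orb(T^n x,T)},T) = K$ as well (this is automatic because $\overline{Orb(T^n x, T)} \subset \overline{Orb(x,T)}$, but the proposition delivers the reverse containment of supports), and more usefully that
\[
\supp\bigl(\overline{Orb((x,T^nx),T\times T)},T\times T\bigr) = \{(z,z):\ z\in K\}.
\]

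Next I would use the second assertion of the preceding proposition on the behavior of the support under the factor map $(X^2, T\times T)\to(X,T)$, or more directly argue that a point $z$ lies in $K$ if and only if, for every open neighborhood $U$ of $z$, the set $N(x,U)$ has positive upper Banach density (Proposition~\ref{prop:orbit-ubd}). The key bookkeeping step is to translate the diagonal condition on the pair into a statement relating the $T$-orbit and the $T^n$-orbit. For $z \in K$, the Banach proximality of $(x, T^n x)$ forces $d(T^k x, T^{k+n} x)$ to be small on a set of Banach density one; combined with the fact that along a set of positive upper Banach density the orbit of $x$ approaches $z$, this should yield that $T^n$ fixes $z$, so $z \in Fix(\overline{Orb(x,T)},T^n)$.

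For the reverse inclusion I would start from a point $z \in \overline{Orb(x,T)}$ with $T^n z = z$ and show $z \in K$. The natural route is to note that a fixed point of $T^n$ inside the orbit closure generates an invariant measure for $T^n$ (the point mass $\delta_z$, or its average $\frac{1}{n}\sum_{i=0}^{n-1}\delta_{T^i z}$ which is $T$-invariant), and any point in the support of such a measure lies in $\supp(\overline{Orb(x,T)},T)$; one must check that $z$ itself is picked up, which follows because $\delta_z$ is supported exactly at $z$ and this measure is $T$-invariant, placing $z$ in the support of the system. Alternatively, I would invoke Proposition~\ref{prop:orbit-ubd}: if $T^n z = z$ and $z$ is a limit of the orbit, then near $z$ the orbit returns with positive upper Banach density, giving $z \in K$ directly.

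The main obstacle I anticipate is the forward direction, namely extracting $T^n z = z$ from Banach proximality of $(x, T^n x)$ together with $z \in K$. The delicate point is that $N(x, B(z,\ep))$ only has positive upper Banach density, not density one, so I cannot simply intersect it with the full-density set $\{k : d(T^k x, T^{k+n}x) < \delta\}$ and conclude the intersection is nonempty with the right approximation properties at a single uniform scale; I will need Lemma~\ref{lem:BD1}(4) to guarantee that the intersection retains positive upper Banach density, then pass to a limit point along that intersection to produce a point $w \in \overline{Orb(x,T)}$ with $w = z$ and $T^n w = w$ simultaneously, by choosing $\delta$ and $\ep$ small and letting them shrink. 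Making this limiting argument clean — so that the same subsequence of times witnesses both the convergence $T^k x \to z$ and the closeness $d(T^k x, T^{k+n} x) \to 0$ — is where the care is required, but it is exactly the kind of argument already used in the proof of Proposition~\ref{prop:BP-supp}, so I expect it to go through by reusing that technique.
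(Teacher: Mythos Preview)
Your proposal is correct, but the paper's proof of the forward inclusion is considerably shorter. Rather than rerunning the density--intersection--limit argument from Proposition~\ref{prop:BP-supp}, the paper observes a single structural fact: the map $\phi\colon w\mapsto (w,T^nw)$ is a conjugacy from $(\overline{Orb(x,T)},T)$ onto $(\overline{Orb((x,T^nx),T\times T)},T\times T)$, so supports correspond under $\phi$. Hence if $u\in K=\supp(\overline{Orb(x,T)},T)$ then $(u,T^nu)\in\supp(\overline{Orb((x,T^nx),T\times T)},T\times T)$, and Proposition~\ref{prop:BP-supp} (which you already invoked) says this set is contained in $\Delta_X$, giving $T^nu=u$ in one line. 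Your hands-on approach reaches the same conclusion via Lemma~\ref{lem:BD1}(4) and a compactness limit, which works fine but duplicates effort already packaged in Proposition~\ref{prop:BP-supp}.

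For the reverse inclusion your argument via the $T$-invariant measure $\tfrac{1}{n}\sum_{i=0}^{n-1}\delta_{T^iz}$ is correct and is the natural one; the paper omits this direction entirely, presumably regarding it as immediate since any periodic point in the orbit closure lies in the support of the system.
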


\begin{proof}
If $u\in \supp(\overline{Orb(x,T)},T)$,
then $(u,T^n u)\in \supp(\overline{Orb((x,T^nx),T\times T)})$.
By Proposition~\ref{prop:BP-supp},
we have $T^nu=u$.
\end{proof}

\begin{cor}\label{cor:xTnx-BP-trans}
Let $(X,T)$ be a transitive system, $x\in Trans(X,T)$ and $n\in\N$.
If there exists $n\in\N$ such that $(x,T^nx)$ is Banach proximal, then $\supp(X,T)=Fix(X,T^n)$.
\end{cor}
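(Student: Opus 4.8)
The plan is to derive this immediately from Corollary~\ref{cor:xTnx-BP}, whose conclusion is stated for the orbit-closure system $(\overline{Orb(x,T)},T)$ rather than for $(X,T)$ itself. The only extra ingredient needed is that a transitive point has dense orbit, so that the orbit closure coincides with the whole space; once this identification is made, the two sides of the equation in Corollary~\ref{cor:xTnx-BP} become exactly the two sides we want.

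First I would invoke the definition of a transitive point: since $x\in Trans(X,T)$, its orbit is dense in $X$, that is, $\overline{Orb(x,T)}=X$. Consequently the subsystem $(\overline{Orb(x,T)},T)$ is just $(X,T)$, and in particular
\[
\supp\bigl(\overline{Orb(x,T)},T\bigr)=\supp(X,T),
\qquad
Fix\bigl(\overline{Orb(x,T)},T^n\bigr)=Fix(X,T^n).
\]

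Next, using the hypothesis that $(x,T^nx)$ is Banach proximal for some $n\in\N$, I would apply Corollary~\ref{cor:xTnx-BP} to obtain
\[
\supp\bigl(\overline{Orb(x,T)},T\bigr)=Fix\bigl(\overline{Orb(x,T)},T^n\bigr).
\]
Substituting the two identifications from the previous step yields $\supp(X,T)=Fix(X,T^n)$, which is the desired conclusion.

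There is essentially no hard step here, since the statement is a direct specialization of Corollary~\ref{cor:xTnx-BP} to the case $\overline{Orb(x,T)}=X$; the only point requiring care is the bookkeeping that both the support and the fixed-point set are computed relative to the orbit-closure subsystem in the cited corollary, so one must make explicit that transitivity of $x$ collapses that subsystem onto all of $(X,T)$. I would therefore simply record that observation and cite Corollary~\ref{cor:xTnx-BP}.
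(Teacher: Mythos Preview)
Your proposal is correct and matches the paper's own proof essentially verbatim: the paper simply says ``This follows from Corollary~\ref{cor:xTnx-BP} since $\overline{Orb(x,T)}=X$.'' Your version just spells out the two identifications that this one-line justification leaves implicit.
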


\begin{proof}
This follows from Corollary~\ref{cor:xTnx-BP} since $\overline{Orb(x,T)}=X$.
\end{proof}

\begin{prop}
Let $(X,T)$ be a transitive system.
If $\supp(X,T)\neq Fix(X,T^n)$ for every $n\in\N$.
Then the interior of $BP(x)$ is empty  for every $x\in X$.
\end{prop}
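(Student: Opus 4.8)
The plan is to argue by contradiction and to reduce everything to Corollary~\ref{cor:xTnx-BP-trans}. Suppose there is a point $x_0\in X$ for which $BP(x_0)$ has non-empty interior, and let $U$ be a non-empty open subset of $BP(x_0)$. Since $BP(X,T)$ is an equivalence relation, every $u\in U$ satisfies $BP(u)=BP(x_0)\supseteq U$, so any two points of $U$ are Banach proximal. Because $(X,T)$ is transitive, $Trans(X,T)$ is a dense $G_\delta$ subset of $X$, and $U$ is non-empty open, so I can choose a transitive point $z\in U$.

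The heart of the argument is to produce an integer $m\ge 1$ with $T^m z\in U$. Granting this, both $z$ and $T^m z$ lie in $U\subseteq BP(x_0)=BP(z)$, whence $(z,T^m z)$ is Banach proximal; since $z$ is a transitive point, Corollary~\ref{cor:xTnx-BP-trans} then forces $\supp(X,T)=Fix(X,T^m)$, contradicting the hypothesis. So the whole statement comes down to showing that the orbit of $z$ reenters $U$ at some positive time.

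To obtain such a return I would suppose the contrary, namely $N(z,U)\subseteq\{0\}$, so that $T^n z\notin U$ for all $n\ge 1$. Then $\{T^n z:\ n\ge 1\}$ is disjoint from the open set $U$, hence so is its closure, and writing $X=\overline{Orb(z,T)}=\{z\}\cup\overline{\{T^n z:\ n\ge 1\}}$ gives $U=U\cap X=\{z\}$. Thus $z$ is an isolated point of $X$. This is exactly the degenerate situation the hypothesis is designed to exclude: since $\{z\}$ is then open, transitivity makes $N(\{z\},\{z\})=\{n\ge 0:\ T^n z=z\}$ infinite, so $z$ is periodic of some period $r\ge 1$; consequently $Orb(z,T)$ is finite and $X=\overline{Orb(z,T)}$ is a single periodic orbit. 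But then the uniformly distributed measure on that orbit is invariant with full support, giving $\supp(X,T)=X=Fix(X,T^r)$, again contradicting the hypothesis. In every case we reach a contradiction, so $BP(x)$ has empty interior for each $x\in X$.

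I expect the main obstacle to be precisely this last step: the superficially obvious passage from density of the orbit to a positive return time can fail, and it fails exactly when $z$ is isolated. Handling that case is where the hypothesis $\supp(X,T)\neq Fix(X,T^n)$ must genuinely be used, via the reduction to a single periodic orbit. The remaining ingredients---reducing to a transitive point using that $BP(X,T)$ is an equivalence relation together with the density of $Trans(X,T)$, and then quoting Corollary~\ref{cor:xTnx-BP-trans}---are routine.
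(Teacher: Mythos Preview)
Your proof is correct and follows exactly the same route as the paper's: assume $BP(x_0)$ contains a non-empty open set $U$, pick a transitive point $z\in U$, find $m\ge 1$ with $T^m z\in U$, and invoke Corollary~\ref{cor:xTnx-BP-trans}. The only difference is cosmetic: the paper simply asserts the existence of such a return time, whereas you carefully justify it by analyzing the degenerate case $U=\{z\}$ (which forces $X$ to be a single periodic orbit and again contradicts the hypothesis); this extra care is welcome but does not change the strategy.
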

\begin{proof}
Suppose that there exists $x\in X$ such that the interior of $BP(x)$ is not empty.
Let $U\subset BP(x)$ be a non-empty open subset.
Then there is a transitive point $y\in U$ and $n\in \N$ such that $T^ny\in U$.
Therefore, $(y,T^ny)$ is Banach proximal since $BP(X,T)$ is an equivalence relation.
By Corollary~\ref{cor:xTnx-BP-trans}, $\supp(X,T)=Fix(X,T^n)$. This is a contradiction.
\end{proof}

\begin{cor}
Let $(X,T)$ be an infinite E-system.
Then the interior of $BP(x)$ is empty  for every $x\in X$.
\end{cor}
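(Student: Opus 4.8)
The plan is to obtain this as an immediate consequence of the preceding Proposition. Since $(X,T)$ is an E-system, it is by definition transitive with $\supp(X,T)=X$. Hence it suffices to check that the hypothesis of that Proposition holds here, namely that $\supp(X,T)=X\neq Fix(X,T^n)$ for every $n\in\N$.

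To verify this I would argue by contradiction. Suppose $X=Fix(X,T^n)$ for some $n\in\N$. Then $T^nz=z$ for all $z\in X$, so $T^n=\mathrm{id}_X$ and every orbit $Orb(z,T)=\{z,Tz,\dotsc,T^{n-1}z\}$ has at most $n$ points. Picking a transitive point $x$ (which exists by transitivity), we would get $X=\overline{Orb(x,T)}=Orb(x,T)$, where the second equality uses that a finite subset of a metric space is closed. This forces $X$ to be finite, contradicting the standing assumption that the system is infinite.

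Consequently $\supp(X,T)=X\neq Fix(X,T^n)$ for every $n\in\N$, and applying the preceding Proposition yields that the interior of $BP(x)$ is empty for every $x\in X$.

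I do not expect any genuine obstacle here: the only real content beyond quoting the Proposition is the elementary observation that an infinite transitive system cannot satisfy $T^n=\mathrm{id}_X$, and this is precisely the finite-orbit-versus-density argument sketched above.
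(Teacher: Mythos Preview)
Your proposal is correct and matches the paper's intent: the paper states this corollary without proof, treating it as an immediate consequence of the preceding Proposition, and the only gap to fill is precisely the elementary observation you supply---that an infinite transitive system cannot have $X=Fix(X,T^n)$, since $T^n=\mathrm{id}_X$ would force a transitive point to have a finite (hence closed) dense orbit equal to $X$.
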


Let $(X,T)$ be a dynamical system. We say that $T$ is \emph{semi-open} if for every non-empty
subset $U$ of $X$, the interior of $TU$ is not empty.
\begin{prop}\label{prop:BP-first-category}
Let $(X,T)$ be a transitive system with $\supp(X,T)\setminus Fix(X,T^n)\neq\emptyset$ for every $n\in\N$.
If $T$ is semi-open, then for every $x\in X$, $BP(x)$ is of first category in $X$.
\end{prop}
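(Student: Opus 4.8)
The plan is to reduce everything to the transitive points and then run a Baire-category argument powered by the fact that $BP(X,T)$ is an equivalence relation. Since $(X,T)$ is transitive, $Trans(X,T)$ is a dense $G_\delta$ subset of $X$, so $X\setminus Trans(X,T)$ is of first category; hence it suffices to prove that $\Omega:=BP(x)\cap Trans(X,T)$ is of first category. Because $BP(x)$ is $F_{\sigma\delta}$ (in particular Borel) and $Trans(X,T)$ is $G_\delta$, the set $\Omega$ has the Baire property, so if it were not of first category it would be comeager in some non-empty open set $V$. I would assume this and derive a contradiction.

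The combinatorial heart of the argument is the observation that $\Omega\cap T^{-\ell}\Omega=\emptyset$ for every $\ell\ge 1$. Indeed, if $y\in\Omega$ and $T^\ell y\in\Omega$, then $y$ and $T^\ell y$ both lie in $BP(x)$, so $(y,T^\ell y)\in BP(X,T)$ since $BP(X,T)$ is an equivalence relation; as $y\in Trans(X,T)$, Corollary~\ref{cor:xTnx-BP-trans} then forces $\supp(X,T)=Fix(X,T^\ell)$, contradicting the hypothesis $\supp(X,T)\setminus Fix(X,T^\ell)\neq\emptyset$. Applying $T^{-k}$ to this vacuous intersection gives $T^{-k}\Omega\cap T^{-\ell}\Omega=\emptyset$ whenever $k\neq\ell$; that is, the sets $\{T^{-\ell}\Omega:\ \ell\ge 0\}$ are pairwise disjoint.

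The place where semi-openness enters is in globalising the purely local information that $\Omega$ is comeager in $V$. First I would note that every power $T^\ell$ is again semi-open, and that for such a map the preimage of a comeager subset of $V$ is comeager in the open set $T^{-\ell}(V)$: choosing a dense $G_\delta$ set $G\subset V$ with $G\subset\Omega$, the preimage $T^{-\ell}(G)$ is $G_\delta$, and for any non-empty open $U\subset T^{-\ell}(V)$ the interior of $T^\ell(U)$ is a non-empty open subset of $V$, hence meets $G$; pulling a point of that intersection back through $T^\ell$ shows $T^{-\ell}(G)\cap U\neq\emptyset$. Thus each $T^{-\ell}\Omega$ is comeager in $T^{-\ell}(V)$. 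To finish, take any transitive point $z$; it visits $V$ infinitely often, so there are $n_1<n_2$ with $T^{n_1}z,T^{n_2}z\in V$, i.e. $z\in W:=T^{-n_1}(V)\cap T^{-n_2}(V)$. Then $W$ is a non-empty open set on which both $T^{-n_1}\Omega$ and $T^{-n_2}\Omega$ are comeager, so their intersection is comeager in $W$ and in particular non-empty, contradicting the disjointness above. Hence $\Omega$, and therefore $BP(x)\subset\Omega\cup(X\setminus Trans(X,T))$, is of first category.

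I expect the main obstacle to be precisely this globalisation step. The equivalence-relation argument together with Corollary~\ref{cor:xTnx-BP-trans} reproves only that $\Omega$ (like $BP(x)$) has empty interior, and upgrading \emph{empty interior} to \emph{first category} for an $F_{\sigma\delta}$ set genuinely requires transporting the comeagerness of $\Omega$ from a single open set $V$ to the overlapping preimages $T^{-n_1}(V)$ and $T^{-n_2}(V)$. Controlling preimages of comeager sets is exactly what the semi-openness hypothesis buys, which explains why it was needed here but not in the preceding proposition.
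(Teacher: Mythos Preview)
Your argument is correct and follows the same strategy as the paper: show that if $BP(x)\cap Trans(X,T)$ were of second category then one could find a transitive point $z$ with $z,T^nz\in BP(x)$ for some $n\ge1$, and then invoke Corollary~\ref{cor:xTnx-BP-trans} to contradict the hypothesis. The only difference is that the paper obtains such a $z$ by quoting Malick\'y's category version of the Poincar\'e recurrence theorem~\cite{M07} as a black box, whereas you reprove that recurrence step inline from semi-openness and the Baire property.
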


\begin{proof}
If $T$ is semi-open, then using the category version of the Poincar\'{e} recurrence theorem
introduced in \cite{M07}, we have that for every Borel set $A$ with second category,
there exists $n\geq 1$ such that $A\cap T^n A\neq\emptyset$.
If there exists a point $y\in X$ such that $BP(y)$ is of the second category.
Then $BP(y)\cap Trans(X,T)$ is also of the second category.
There exists a transitive point $z$  and $n\geq 1$ such that $z, T^nz\in BP(y)$.
Then $(z,T^nz)$ is Banach proximal.
By Corollary~\ref{cor:xTnx-BP-trans}, $\supp(X,T)=Fix(X,T^n)$, which is a contradiction.
\end{proof}

\begin{cor}
Let $(X,T)$ be a non-periodic minimal system. Then for every $x\in X$, $BP(x)$ is of first category in $X$.
\end{cor}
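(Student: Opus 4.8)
The plan is to derive this corollary directly from Proposition~\ref{prop:BP-first-category} by checking that a non-periodic minimal system meets all three of its hypotheses: transitivity, the condition $\supp(X,T)\setminus Fix(X,T^n)\neq\emptyset$ for every $n\in\N$, and semi-openness of $T$.

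First I would record the easy structural facts. A minimal system is transitive, and in fact every point is a transitive point, so $Trans(X,T)=X$. For the support, I would note that any $\mu\in M(X,T)$ has $\supp(\mu)$ a non-empty closed invariant set, which by minimality must equal $X$; hence $\supp(X,T)=X$.

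Next I would handle the fixed-point condition, which is where non-periodicity enters. For each $n\in\N$ the set $Fix(X,T^n)$ is closed and $T$-invariant (if $T^nx=x$ then $T^n(Tx)=Tx$), so by minimality it is either empty or all of $X$. It cannot be all of $X$: if $T^n=\mathrm{id}$ then every point is periodic with period dividing $n$, and since the orbit closure of any point is $X$, the space would reduce to a single finite periodic orbit, contradicting the assumption that the system is non-periodic. Thus $Fix(X,T^n)=\emptyset$ for every $n$, and in particular $\supp(X,T)\setminus Fix(X,T^n)=X\neq\emptyset$.

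The remaining and genuinely substantive hypothesis is that $T$ be semi-open. Here I would invoke the known theorem that every minimal map of a compact metric space is semi-open; this is the one ingredient not furnished by the earlier results in the paper, and it is the step I expect to be the crux, since minimal maps may be non-invertible and semi-openness can fail for general transitive maps, so the Baire-category arguments one tries first tend to be circular. Granting this fact, all hypotheses of Proposition~\ref{prop:BP-first-category} are satisfied, and the conclusion that $BP(x)$ is of first category for every $x\in X$ follows at once.
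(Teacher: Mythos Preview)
Your proposal is correct and follows exactly the same route as the paper's proof: both apply Proposition~\ref{prop:BP-first-category} after observing that $T$ is semi-open for minimal systems (the paper cites \cite{A} for this). You have simply made explicit the verification that the remaining hypotheses of the proposition hold---transitivity, $\supp(X,T)=X$, and $Fix(X,T^n)=\emptyset$ for all $n$---which the paper leaves to the reader.
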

\begin{proof}
It follows from Proposition~\ref{prop:BP-first-category} and the fact that $T$ is semi-open (see~\cite{A}).
\end{proof}

Recall that a measure $\mu\in M(X)$ is called \emph{non-atomic} if $\mu(\{x\})=0$ for every $x\in X$.

\begin{prop}
Let $(X,T)$ be a dynamical system and $x\in X$.
Then for every non-atomic invariant measure $\mu\in M(X,T)$, $\mu(BP(x))=0$.
\end{prop}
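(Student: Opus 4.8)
The plan is to lift the problem to the product system $(X^2,T\times T)$ and to exploit that $BP(X,T)$ is an equivalence relation together with the support characterisation in Proposition~\ref{prop:Banach-proximal-equi}. Since $(x,x)$ is trivially Banach proximal we have $x\in BP(x)$, and because $BP(X,T)$ is an (invariant) equivalence relation, any two points $y,z\in BP(x)$ satisfy $(y,z)\in BP(X,T)$; hence $BP(x)\times BP(x)\subseteq BP(X,T)$. Now suppose, for contradiction, that $\mu$ is non-atomic and $\mu(BP(x))>0$, and set $\nu=\mu\times\mu\in M(X^2,T\times T)$. Non-atomicity of $\mu$ gives $\nu(\Delta_X)=\int_X\mu(\{w\})\,d\mu(w)=0$, while $\nu(BP(X,T))\ge\nu(BP(x)\times BP(x))=\mu(BP(x))^2>0$. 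Consequently $\nu\bigl(BP(X,T)\setminus\Delta_X\bigr)>0$. (Both $BP(X,T)$ and $\Delta_X$ are Borel, the former being $F_{\sigma\delta}$, so every set above is measurable.) This is the only place the non-atomicity hypothesis is used.

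Next I would pass to an ergodic component. Applying the ergodic decomposition to $\nu$ as a $T\times T$-invariant measure, $\nu=\int\rho\,d\tau(\rho)$ over $M^e(X^2,T\times T)$, so $\nu(E)=\int\rho(E)\,d\tau(\rho)$ for the Borel set $E=BP(X,T)\setminus\Delta_X$. Since $\nu(E)>0$, there is an ergodic measure $\rho\in M^e(X^2,T\times T)$ with $\rho\bigl(BP(X,T)\setminus\Delta_X\bigr)>0$; in particular $\rho(BP(X,T))>0$.

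Finally I would derive a contradiction from the support machinery. Because $\rho$ is ergodic, Proposition~\ref{prop:ergodic-generic} says $\rho$-almost every point of $X^2$ is generic for $\rho$; intersecting the full-measure set of generic points with $BP(X,T)$ (which has positive $\rho$-measure) yields a generic point $(y,z)\in BP(X,T)$. Genericity forces the empirical measures of $(y,z)$ to converge weak$^*$ to $\rho$, and since these are supported on the closed invariant set $\overline{Orb((y,z),T\times T)}$, the limit $\rho$ is an invariant measure of that subsystem, so $\supp(\rho)\subseteq\supp\bigl(\overline{Orb((y,z),T\times T)},T\times T\bigr)$. On the other hand $(y,z)$ is Banach proximal, so by Proposition~\ref{prop:Banach-proximal-equi} its orbit-closure support is contained in $\Delta_X$. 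Combining the two inclusions gives $\supp(\rho)\subseteq\Delta_X$, whence $\rho(\Delta_X)=1$ and therefore $\rho(BP(X,T)\setminus\Delta_X)\le\rho(\Delta_X^c)=0$, contradicting $\rho(BP(X,T)\setminus\Delta_X)>0$.

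The one genuinely delicate point, where I would spend the most care, is the link in the last paragraph: checking that a generic point's empirical measures, being supported on its orbit closure, really force $\supp(\rho)$ into the orbit-closure support, so that Proposition~\ref{prop:Banach-proximal-equi} can be applied in the product system. Everything else—the equivalence-relation inclusion, the identity $\nu(\Delta_X)=0$ from non-atomicity, and the extraction of the ergodic component—is routine bookkeeping.
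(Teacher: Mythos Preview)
Your proof is correct and follows essentially the same route as the paper: lift to $\mu\times\mu$, use non-atomicity to get $(\mu\times\mu)(\Delta_X)=0$, pass to an ergodic component via the ergodic decomposition, and derive a contradiction from a Banach proximal pair that is typical for that component. The only cosmetic difference is in the last step: the paper invokes the pointwise ergodic theorem directly to conclude $m(BP(x)\times BP(x))=0$ whenever $m$ is ergodic with $m(\Delta_X)=0$, whereas you reach the same conclusion by picking a generic Banach proximal pair and applying Proposition~\ref{prop:Banach-proximal-equi} to force $\supp(\rho)\subset\Delta_X$.
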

\begin{proof}
It is clear that $\mu\times\mu$ is an invariant measure for $(X^2,T\times T)$.
Since $\mu$ is non-atomic, $\mu\times \mu(\Delta_X)=0$.
By the ergodic decomposition of $\mu\times \mu$,
there is a unique measure $\tau$ on the Borel subsets of $M(X^2,T\times T)$
such that $\tau(M^e(X^2,T\times T))=1$ and $\mu\times \mu=\int_{M^e(X^2,T\times T)}md\tau(m)$.
Since $\mu\times \mu(\Delta_X)=\int_{M^e(X^2,T\times T)}m(\Delta_X)d\tau(m)$.
Then $m(\Delta_X)=0$ for almost all $m\in M^e(X^2,T\times T)$.
By the pointwise ergodic theorem, $m(BP(x)\times BP(x))=0$
for every measure $\mu\in M^e(X^2,T\times T)$ with $m(\Delta_X)=0$,
which implies that $\mu\times \mu(BP(x)\times BP(x))=0$ and hence $\mu(BP(x))=0$.
\end{proof}

\section{Proximality on the induced spaces}
Recall that a dynamical system $(X,T)$ is called \emph{proximal} if any two points $x,y\in X$ are proximal.
We have the following characterization of proximal systems.
\begin{thm}[\cite{AK03,M11}]\label{thm:proximal-SP}
Let $(X,T)$ be a dynamical system.
Then the following conditions are equivalent:
\begin{enumerate}
  \item $(X,T)$ is proximal;
  \item it has a fixed point which is the unique minimal point of $(X,T)$;
  \item $SP(X,T)=X^2$.
\end{enumerate}
\end{thm}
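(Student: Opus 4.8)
The plan is to prove the three equivalences in a cycle $(1)\Rightarrow(2)\Rightarrow(3)\Rightarrow(1)$, exploiting the characterization of $SP(X,T)$ as an invariant equivalence relation and the fact that a unique minimal set that is a fixed point forces universal proximality.

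\smallskip

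For $(1)\Rightarrow(2)$, I would first recall that every dynamical system on a compact space contains at least one minimal subsystem. If $(X,T)$ is proximal, I claim any two minimal points $x,y$ must coincide: since $(x,y)$ is proximal there is a sequence $n_k$ with $d(T^{n_k}x,T^{n_k}y)\to 0$; passing to a subsequence, $T^{n_k}x\to z$ and $T^{n_k}y\to z$ for a common limit $z$. Because $x,y$ lie in minimal sets, standard arguments (using that a proximal pair with both coordinates minimal is asymptotic to a single point under some net, combined with minimality forcing $\overline{Orb(x)}=\overline{Orb(y)}$) yield that the two minimal sets agree and in fact reduce to a single point. The key step is to upgrade ``unique minimal set'' to ``unique minimal set consisting of a single fixed point'': if $M$ is the unique minimal set and $p,q\in M$, then $(p,q)$ proximal together with both being minimal points forces $p=q$, so $M=\{p\}$; invariance of $M$ then gives $Tp=p$.

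\smallskip

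For $(2)\Rightarrow(3)$, assume $(X,T)$ has a fixed point $p$ which is its unique minimal point. I want to show $SP(X,T)=X^2$, i.e.\ every pair is syndetically proximal. The natural route is to show first that every point $x$ satisfies $p\in\omega(x,T)$ in a strong (syndetic) sense: for every neighborhood $U$ of $p$, the return-time set $N(x,U)$ is syndetic. This is where I expect the main obstacle to lie. The idea is that since $p$ is the only minimal point, every point $x$ has $p$ in the closure of its orbit, and more strongly the minimal points in $\overline{Orb(x,T)}$ all equal $p$; one then invokes that a point whose orbit closure contains a unique minimal point $p$ which is fixed must enter every neighborhood of $p$ along a syndetic set (otherwise the complement would be thick and would support another minimal subset disjoint from $\{p\}$, contradicting uniqueness). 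Granting this, for any two points $x,y$ and any $\ep>0$, choosing $U=B(p,\ep/2)$ makes $N(x,U)$ and $N(y,U)$ both syndetic; since $SP(X,T)$ is an equivalence relation (by the cited result of~\cite{C63}) and both $x,y$ are syndetically proximal to $p$, transitivity gives $(x,y)\in SP(X,T)$.

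\smallskip

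For $(3)\Rightarrow(1)$, this is immediate: syndetic proximality implies proximality, since any syndetic set is infinite, so $SP(X,T)\subset P(X,T)$, and $SP(X,T)=X^2$ forces $P(X,T)=X^2$, i.e.\ $(X,T)$ is proximal. The hard part of the whole argument is the syndeticity claim in $(2)\Rightarrow(3)$; everything else is either standard compactness/minimality bookkeeping or a direct consequence of the equivalence-relation property of $SP$. Since this theorem is attributed to~\cite{AK03,M11}, I would at this point simply cite those sources for the delicate syndeticity estimate rather than reproduce it in full.
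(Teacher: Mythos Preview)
The paper does not prove this theorem at all: it is quoted from \cite{AK03,M11} and used as a black box, so there is no ``paper's own proof'' to compare against. Your sketch is therefore not redundant but rather supplies what the paper omits.

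Your argument is essentially sound, with one genuine soft spot in $(1)\Rightarrow(2)$. You write that ``$(p,q)$ proximal together with both being minimal points forces $p=q$,'' but having $p$ and $q$ individually minimal in $X$ does \emph{not} make $(p,q)$ a minimal point of $(X^2,T\times T)$, which is what the standard ``proximal $+$ minimal $\Rightarrow$ diagonal'' argument actually needs. The clean fix is to apply proximality to the pair $(p,Tp)$ for $p$ in the unique minimal set $M$: choose $n_k$ with $d(T^{n_k}p,T^{n_k+1}p)\to 0$, pass to a subsequence so $T^{n_k}p\to z$, and conclude $Tz=z$; since $z\in M$ and $M$ is minimal, $M=\{z\}$.

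Your $(2)\Rightarrow(3)$ is correct and the ``delicate syndeticity estimate'' you defer is in fact the short argument you outline: if $N(x,U)$ were not syndetic then $N(x,U^c)$ would be thick, and any limit of $T^{a_n}x$ along the left endpoints of long blocks in $U^c$ would have its entire forward orbit in the closed set $U^c$, producing a minimal point distinct from $p$. Invoking the equivalence-relation property of $SP$ from \cite{C63} to pass from $(x,p),(y,p)\in SP$ to $(x,y)\in SP$ is exactly right and neatly sidesteps the fact that intersections of syndetic sets need not be syndetic. The implication $(3)\Rightarrow(1)$ is, as you say, immediate.
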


Recall that a dynamical system $(X,T)$ is called \emph{strongly proximal}
if the induced system on the measure space $(M(X),T_M)$ is proximal~\cite{G76}.
If $(X,T)$ is strongly proximal, then it is proximal,
since $(X,T)$ can be regarded as a subsystem of the proximal system $(M(X),T_M)$.
Similarity to Theorem~\ref{thm:proximal-SP}, we have the following characterization of strongly proximal systems.

\begin{thm}\label{thm:strongly-proximal}
Let $(X,T)$ be a dynamical system.
Then the following conditions are equivalent:
\begin{enumerate}
  \item\label{enum-sp} $(X,T)$ is strongly proximal;
  \item\label{enum-p-ue} $(X,T)$ is proximal and unique ergodic;
  \item\label{enum-supp-s} $\supp(X,T)$ is a singleton;
  \item\label{enum-BP} $BP(X,T)=X^2$;
  \item\label{enum-ud1} for every pair $(x,y)\in X^2$ and every $\ep>0$,
  the set $\{n\in\Z:\ d(T^nx,T^ny)<\ep\}$ has upper density one.
\end{enumerate}
\end{thm}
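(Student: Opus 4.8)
The plan is to prove the five conditions equivalent by running two overlapping cycles that meet at condition~$(\ref{enum-supp-s})$: namely
$(\ref{enum-sp})\Rightarrow(\ref{enum-p-ue})\Rightarrow(\ref{enum-supp-s})\Rightarrow(\ref{enum-sp})$
to tie together the first three, and
$(\ref{enum-supp-s})\Rightarrow(\ref{enum-BP})\Rightarrow(\ref{enum-ud1})\Rightarrow(\ref{enum-supp-s})$
for the last three. Two of the links are essentially free: $(\ref{enum-BP})\Rightarrow(\ref{enum-ud1})$ holds because a set of Banach density one has ordinary, hence upper, density one, and the implication ``strongly proximal $\Rightarrow$ proximal'' is already recorded before the statement. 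The real content is spread over the remaining four implications, which separate naturally into a measure-theoretic group phrased on $M(X)$ and a combinatorial group driven by $\supp(X,T)$.

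For the measure-theoretic descent $(\ref{enum-sp})\Rightarrow(\ref{enum-p-ue})\Rightarrow(\ref{enum-supp-s})$ I would exploit that the fixed points of $T_M$ are exactly the members of $M(X,T)$, each of which is by itself a minimal subset of $(M(X),T_M)$. Since strong proximality means $(M(X),T_M)$ is proximal, Theorem~\ref{thm:proximal-SP} gives it a unique minimal point, so $M(X,T)$ must be a singleton and $(X,T)$ is uniquely ergodic; proximality of $(X,T)$ is inherited from the ambient system, giving $(\ref{enum-p-ue})$. For $(\ref{enum-p-ue})\Rightarrow(\ref{enum-supp-s})$, proximality provides (again by Theorem~\ref{thm:proximal-SP}) a fixed point $p$ that is the unique minimal point; the minimal set $\{p\}$ carries $\delta_p\in M(X,T)$, and unique ergodicity forces the unique invariant measure to be $\delta_p$, so $\supp(X,T)=\supp(\delta_p)=\{p\}$ is a singleton.

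The combinatorial core consists of $(\ref{enum-supp-s})\Rightarrow(\ref{enum-BP})$ and $(\ref{enum-ud1})\Rightarrow(\ref{enum-supp-s})$. If $\supp(X,T)=\{p\}$, then by the product formula $\supp(X^2,T\times T)=\supp(X,T)\times\supp(X,T)$ proved above we get $\supp(X^2,T\times T)=\{(p,p)\}\subset\Delta_X$; as the orbit closure of any $(x,y)$ is a subsystem, its support lies in $\{(p,p)\}\subset\Delta_X$, and Proposition~\ref{prop:Banach-proximal-equi} shows $(x,y)$ is Banach proximal, so $BP(X,T)=X^2$. For the return implication, condition~$(\ref{enum-ud1})$ forces $\liminf_n d(T^nx,T^ny)=0$ for all pairs, so $(X,T)$ is proximal and Theorem~\ref{thm:proximal-SP} again yields a fixed point $p$ that is the unique minimal point, with $p\in\supp(X,T)$. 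If $\supp(X,T)$ contained a point $q\neq p$, then Proposition~\ref{prop:supp-dense-rec-pld} would supply a recurrent point $r$ of positive lower density with $d(r,q)<d(p,q)/2$, so that $d(r,p)=:2\rho>0$ and $N(r,B(r,\rho))$ has positive lower density; applying $(\ref{enum-ud1})$ to $(r,p)$ and using $T^np=p$ shows $\{n:\ d(T^nr,p)<\rho\}$ has upper density one, so its complement, which contains $N(r,B(r,\rho))$, has lower density zero, contradicting that the latter has positive lower density. Hence $\supp(X,T)=\{p\}$.

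It remains to close the measure-theoretic loop with $(\ref{enum-supp-s})\Rightarrow(\ref{enum-sp})$, which I expect to be the main obstacle, since it is the only step that must control the induced action on all of $M(X)$ rather than on $X$ alone. Writing $\supp(X,T)=\{p\}$ with $p$ fixed, Proposition~\ref{prop:support-min-BD1} gives that for every $x$ and every $\ep>0$ the set $N(x,B(p,\ep))$ has Banach density one, hence density one; a routine averaging argument then shows that $\frac1N\sum_{n=0}^{N-1}d_M(T_M^n\nu,\delta_p)\to 0$ for every $\nu\in M(X)$, whence $\liminf_n d_M(T_M^n\nu,\delta_p)=0$ and every $\nu$ is proximal to the fixed point $\delta_p$ in $(M(X),T_M)$. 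Consequently any minimal subset of $(M(X),T_M)$ must contain $\delta_p$ and therefore equals $\{\delta_p\}$, so $\delta_p$ is a fixed point that is the unique minimal point, and Theorem~\ref{thm:proximal-SP} certifies that $(M(X),T_M)$ is proximal, i.e.\ $(X,T)$ is strongly proximal. The delicate point here is the passage from density-one recurrence near $p$ inside $X$ to genuine proximality in the measure space: the mass of $T_M^n\nu$ may fail to converge to $\delta_p$ along the full orbit, so one must genuinely use the Ces\`aro statement (equivalently, extract a subsequence of good return times) rather than hope for convergence of $T_M^n\nu$ itself.
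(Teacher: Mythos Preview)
Your proof is correct and shares the paper's overall architecture of two cycles through condition~$(\ref{enum-supp-s})$, but several individual implications are handled differently. Most notably, for $(\ref{enum-supp-s})\Rightarrow(\ref{enum-sp})$ the paper observes that any weak$^*$ limit of $\frac{1}{N}\sum_{n=0}^{N-1}T_M^n\nu$ lies in $M(X,T)=\{\delta_p\}$, so $\delta_p$ belongs to the closed convex hull of $\overline{Orb(\nu,T_M)}$; since $\delta_p$ is an extreme point of $M(X)$, Lemma~\ref{lem:exQ} then places $\delta_p$ directly in $\overline{Orb(\nu,T_M)}$. Your route is more elementary: you integrate the pointwise density-one return to $B(p,\ep)$ against $\nu$ (via dominated convergence) to get $\frac{1}{N}\sum_{n<N}(T_M^n\nu)(B(p,\ep))\to 1$, which forces $T_M^n\nu$ to be weak$^*$-close to $\delta_p$ along a density-one set of times and hence $\liminf_n d_M(T_M^n\nu,\delta_p)=0$. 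This avoids the convex-analytic Lemma~\ref{lem:exQ} entirely, at the cost of the averaging step you flag as ``routine''; it is routine, but it is worth stating the integration explicitly since that is precisely where uniformity in $x$ is circumvented. Your treatments of $(\ref{enum-supp-s})\Rightarrow(\ref{enum-BP})$ (via Proposition~\ref{prop:Banach-proximal-equi} rather than a contradiction through Lemma~\ref{lem:pubd-measure}) and of $(\ref{enum-ud1})\Rightarrow(\ref{enum-supp-s})$ (pairing a single recurrent point with the fixed point $p$, rather than invoking a recurrent pair in $\supp(X^2,T\times T)\setminus\Delta_X$) are also legitimate alternatives that sidestep the product formula for supports at those steps.
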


Before proving Theorem~\ref{thm:strongly-proximal},
we need to recall some notions concerning convex sets.
Let $Q$ be a subset of a locally convex space $E$.
We write $ex(Q)$ for the set of \emph{extreme points} of $Q$.
The \emph{closed convex hull} of $X$ is the smallest closed convex set containing $X$,
we write $\overline{co}(X)$ for this set. We will use the following result.

\begin{lem}[\cite{A}] \label{lem:exQ} 
Suppose $Q$ is a compact convex subset of the locally convex space $E$ and $Z$ is a subset of $Q$ such that
$\overline{co}(Z)=Q$. Then $ex(Q)\subset \overline{Z}$.
\end{lem}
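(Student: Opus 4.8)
The statement to be proved is Milman's theorem, the partial converse to the Krein--Milman theorem. The plan is to argue by contradiction using a finite covering of $\overline{Z}$ together with the defining property of an extreme point. Write $K=\overline{Z}$; since $Z\subset Q$ and $Q$ is compact (hence closed), $K$ is a closed subset of $Q$ and therefore compact, and $\overline{co}(K)=\overline{co}(Z)=Q$. Let $p\in ex(Q)$ and suppose, toward a contradiction, that $p\notin K$. Because $E$ is a Hausdorff locally convex space and $K$ is compact with $p\notin K$, I can first choose a convex open neighborhood of $0$ separating $p$ from $K$, and then shrink it to a convex balanced open neighborhood $V$ of $0$ whose closure still satisfies $(p+\overline{V})\cap K=\emptyset$.

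Next I would localize $K$. The translates $\{z+V:\ z\in K\}$ form an open cover of the compact set $K$, so there exist $z_1,\dots,z_n\in K$ with $K\subset\bigcup_{i=1}^n(z_i+V)$. Put $K_i=K\cap\overline{(z_i+V)}$ and $Q_i=\overline{co}(K_i)$. Each $K_i$ is compact and contained in the closed convex set $z_i+\overline{V}$, so $Q_i\subset z_i+\overline{V}$. Moreover $Q_i$ is a closed subset of the compact convex set $Q$, hence itself compact and convex, and $K=\bigcup_{i=1}^n K_i$ by the covering.

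The decisive step is to observe that the (unclosed) convex hull $co(Q_1\cup\dots\cup Q_n)$ is already compact, and consequently equals $Q$: it is the image of the compact set $S\times Q_1\times\dots\times Q_n$, where $S=\{(t_1,\dots,t_n):\ t_i\ge 0,\ \sum_i t_i=1\}$, under the continuous map $(t_1,\dots,t_n,q_1,\dots,q_n)\mapsto\sum_{i=1}^n t_iq_i$, hence compact, in particular closed and convex; it contains every $K_i$ and thus $K$, so it contains $\overline{co}(K)=Q$, while clearly lying inside $Q$. Therefore $Q=co(Q_1\cup\dots\cup Q_n)$. Now $p\in Q$ gives a representation $p=\sum_{i=1}^n t_iq_i$ with $q_i\in Q_i$, $t_i\ge 0$, $\sum_i t_i=1$. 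Since $p$ is an extreme point of $Q$, any point of which it is a convex combination must equal $p$ for every index of positive weight, so $p=q_j\in Q_j$ for some $j$ with $t_j>0$. Hence $p\in Q_j\subset z_j+\overline{V}$, which yields $z_j\in p+\overline{V}$ (using that $V$, and so $\overline{V}$, is balanced); as $z_j\in K$, this contradicts $(p+\overline{V})\cap K=\emptyset$. Thus $p\in K=\overline{Z}$, proving $ex(Q)\subset\overline{Z}$.

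I expect the two genuinely delicate points to be, first, arranging $(p+\overline{V})\cap K=\emptyset$ with $V$ \emph{convex and balanced} --- this is precisely where the Hausdorff and locally convex hypotheses on $E$ are used --- and second, the compactness of the finite convex hull $co(Q_1\cup\dots\cup Q_n)$, which is what allows me to write $p$ as an honest convex combination of points lying in the small sets $Q_i$ without passing to a further closure. Everything else reduces to the elementary fact that an extreme point cannot be a nontrivial convex combination of distinct points of $Q$.
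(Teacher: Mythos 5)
Your proof is correct: it is the classical proof of Milman's theorem (the partial converse to the Krein--Milman theorem, as in Rudin's \emph{Functional Analysis}, Theorem 3.25) --- separate the extreme point $p$ from $K=\overline{Z}$ by a closed convex balanced neighborhood, cover $K$ by finitely many translates $z_i+V$, and use the compactness of $co(Q_1\cup\dots\cup Q_n)$ (as the continuous image of $S\times Q_1\times\dots\times Q_n$) to exhibit $p$ as an honest convex combination, which extremality collapses to $p\in Q_j\subset z_j+\overline{V}$, a contradiction. The paper itself gives no proof of this lemma, quoting it from \cite{A}, so there is nothing to diverge from; your argument correctly supplies the standard one, and the two delicate points you flag (the convex balanced $\overline{V}$ with $(p+\overline{V})\cap K=\emptyset$, and the closedness of the finite convex hull via compactness, which is where the Hausdorff hypothesis enters) are both handled properly.
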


\begin{proof}[Proof of Theorem~\ref{thm:strongly-proximal}]
\eqref{enum-sp} $\Rightarrow$ \eqref{enum-p-ue}
Since $(X,T)$ is strongly proximal, $(X,T)$ is also proximal and then there exists a unique minimal point $x_0\in X$.
Since $(M(X),T_M)$ is proximal, $\delta_{x_0}$ is the unique minimal point in $(M(X),T_M)$, and then it is a fixed point.
Note that $\mu\in M(X)$ is an invariant measure of $(X,T)$
if and only if $\mu$ is a fixed point in $(M(X),T_M)$.
Then $M(X,T)=\{\delta_{x_0}\}$ and $(X,T)$ is unique ergodic.

\eqref{enum-p-ue}$\Leftrightarrow$\eqref{enum-supp-s} is obvious.

\eqref{enum-supp-s} $\Rightarrow$ \eqref{enum-sp}
Assume that $\supp(X,T)=\{z\}$. Then $z$ is a fixed point in $(X,T)$ and $M(X,T)=\{\delta_z\}$.
Let $\mu \in M(X)$.
Note that any weak $*$ limit of $\{\frac{1}{n}\sum_{i=0}^{n-1}T^i_M\mu\}$ is
an invariant measure of $(X,T)$, then $\delta_z\in \overline{co}\bigl(\overline{Orb(\mu, T_M)}\bigr)$.
Since $\delta_z$ is an extreme point, by Lemma~\ref{lem:exQ} $\delta_z\in \overline{Orb(\mu, T_M)}$.
Then $\delta_z$ is the unique minimal point in $(M(X),T_M)$.

\eqref{enum-supp-s}  $\Rightarrow$ \eqref{enum-BP} Suppose that $BP(X,T)\neq X^2$, i.e.,
there exist a pair $(x,y)\in X^2\setminus \Delta_X$ such that $(x,y)\notin BP(X,T)$.
Then there is some  $\epsilon>0$ such that the set $F=\{n\in \Z_+:d(T^nx,T^ny)> \epsilon\}$
has positive upper Banach density.
By Lemma \ref{lem:pubd-measure} there exists an invariant measure $\mu_1\in M(X,T)$
with $\mu_1(\overline{\{T^nx:n\in F\}})>0$, and an invariant measure
$\mu\in M(X\times X,T\times T)$ with $\mu(\overline{\{(T^nx,T^ny):n\in F\}})>0$. Since
$\supp(X,T)$ is a singleton and $\supp(X\times X,T\times T)=\supp(X,T)\times \supp(X,T)$,
then we know that $\supp(X\times X,T\times T)\subset \Delta_X$. But $\overline{\{(T^nx,T^ny):n\in F\}}
\cap \Delta_X=\emptyset$ since $d(T^nx,T^ny)> \epsilon$ for $n\in F$, which is a contradiction.
\eqref{enum-BP}$\Rightarrow$\eqref{enum-ud1} is obvious.

\eqref{enum-ud1} $\Rightarrow$ \eqref{enum-supp-s}
Suppose that $\supp(X,T)$ is not a singleton.
Then $\supp(X^2,T\times T)\setminus\Delta_X\neq\emptyset$, since $\supp(X^2,T\times T)=\supp(X,T)\times \supp(X,T)$.
By Lemma~\ref{prop:supp-dense-rec-pld}  there is a pair $(u,v)\in \supp(X^2,T\times T)\setminus\Delta_X$
which are recurrent with positive lower density.
Choose $\ep>0$ such that $B(u,2\ep)\cap B(v,2\ep)=\emptyset$ and
$N(u,B(u,\ep))\cap N(v,B(v,\ep))$ has positive lower density,
Then the upper density of  $\{n\in\Z:\ d(T^nu,T^nv)<\ep\}$ is less than one.
\end{proof}

\begin{rem}\label{rem:hs-sp}
There are many examples of dynamical systems which are strongly proximal.
It is shown in~\cite{K13} that
a hereditary subshift has zero topological entropy if and only if it is proximal and unique ergodic,
and then if and only if it is strongly proximal.
\end{rem}

\begin{rem}
It is shown in~\cite{HZ02} that there exists a strongly mixing subshift which is strongly proximal~\cite{HZ02}.
Inspired by \cite[Theorem~6.5]{MP13b}, we can give a more simple example of this type.

Let $X\subset\{0,1\}^{\Z}$ be the collection of $x$ satisfying the following condition:
if $w$ is a word of length $2^n$ (where $n\in\N$) appearing in $x$ and if $w$ starts with $1$,
then $\#\{i:w_i\neq 0\}\leq n$.
It is clear that $(X,\sigma)$ is a subshift.
Let $L(X)$ denote the language of $X$, that is, the set $L(X):=\{x[0,k]:x\in X,k\geq 0\}$.
If $u,v\in L(X)$, then $u0^kw0^\infty\in X$ for all sufficiently large $k$, which shows that $(X,\sigma)$ is strongly mixing.
It is easy to see that every point $x\in X$ goes to $0^\infty$ with Banach density one.
Then the support of $(X,\sigma)$ is $\{0^\infty\}$,
which implies that $(X,\sigma)$ is strongly proximal.
\end{rem}

\begin{rem}
By the variational principle of topological entropy,
a strongly proximal system must have zero topological entropy.
But there are proximal systems with positive topological entropy~\cite{O10}.
There also exists a proximal system with uniformly positive entropy of all orders,
that is each of its finite covers by non-dense open subsets has positive topological entropy~\cite{HLY12}.
\end{rem}

\begin{prop}\label{prop:wm-BP}
Let $(X,T)$ be a weakly mixing system.
Then either $BP(X,T)=X^2$ (and $\supp(X,T)$ is a singleton) or $BP(X,T)$ is
a first category subset of $X\times X$ disjoint with $Trans(X^2,T\times T)$.
\end{prop}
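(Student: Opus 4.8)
The plan is to establish a dichotomy for weakly mixing systems based on whether $\supp(X,T)$ is a singleton. First I would dispose of the easy half: if $\supp(X,T)$ is a singleton, then by the equivalence $\eqref{enum-supp-s}\Leftrightarrow\eqref{enum-BP}$ in Theorem~\ref{thm:strongly-proximal} we immediately get $BP(X,T)=X^2$. So the substance of the proposition is the other alternative: assuming $\supp(X,T)$ is \emph{not} a singleton, I must show that $BP(X,T)$ is both disjoint from $Trans(X^2,T\times T)$ and of first category in $X\times X$.

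For the disjointness claim, the key observation is that since $(X,T)$ is weakly mixing, $(X^2,T\times T)$ is transitive. If $(x,y)\in Trans(X^2,T\times T)$, then $\overline{Orb((x,y),T\times T)}=X^2$, so $\supp\bigl(\overline{Orb((x,y),T\times T)},T\times T\bigr)=\supp(X^2,T\times T)=\supp(X,T)\times\supp(X,T)$. Because $\supp(X,T)$ is not a singleton, this support is not contained in $\Delta_X$. By the equivalence $(1)\Leftrightarrow(2)$ of Proposition~\ref{prop:Banach-proximal-equi}, the pair $(x,y)$ cannot be Banach proximal. Hence $BP(X,T)\cap Trans(X^2,T\times T)=\emptyset$, proving the disjointness.

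For the first-category claim, I would argue that $BP(X,T)$ omits the dense $G_\delta$ set $Trans(X^2,T\times T)$ and then upgrade this to first category using the structure of $BP(X,T)$. The plan is to invoke the earlier result that $BP(X,T)$ is an $F_{\sigma\delta}$ subset of $X^2$, write it as $\bigcap_m\bigcup_N A_{m,N}$ where each $A_{m,N}$ is closed, and show each $A_{m,N}$ (or an appropriate closed piece containing $BP(X,T)$) is nowhere dense. The mechanism is that any closed set containing $BP(X,T)$ but disjoint from the dense set $Trans(X^2,T\times T)$ must have empty interior, since a non-empty open set in a transitive system meets every dense $G_\delta$; combined with $T\times T$-invariance of $BP(X,T)$, one can pin down that the relevant closed layers are nowhere dense. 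Alternatively, since $BP(X,T)$ is a proper invariant subset avoiding all transitive points, I would use Proposition~\ref{prop:BP-first-category} after verifying its hypotheses: weak mixing and the failure of $\supp(X,T)$ being a singleton together force $\supp(X^2,T\times T)\setminus Fix(X^2,(T\times T)^n)\neq\emptyset$ for all $n$, and the semi-openness needed there follows because $T\times T$ is transitive, hence semi-open on a suitable subsystem.

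The hard part will be the first-category conclusion, specifically controlling the $F_{\sigma\delta}$ layers well enough to guarantee nowhere-density rather than merely meagerness of the complement of transitive points. The delicate point is that being $F_{\sigma\delta}$ and avoiding a dense $G_\delta$ does not by itself force first category; I need the extra leverage that $BP(X,T)$ is a $(T\times T)$-invariant equivalence relation avoiding \emph{all} transitive points, so that each closed approximating set, being invariant-ish and missing the orbit-dense points, is confined to a nowhere dense set. I expect to route this through Corollary~\ref{cor:xTnx-BP-trans} in the spirit of Proposition~\ref{prop:BP-first-category}: if any closed layer had interior, it would contain a transitive point $z$ together with some $T^nz$ in $BP(X,T)$, forcing $\supp(X,T)=Fix(X,T^n)$ and hence (by weak mixing) a singleton, contradicting our standing assumption.
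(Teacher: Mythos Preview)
Your disjointness argument is essentially the same as the paper's: once $\supp(X,T)$ is not a singleton, a transitive point $(x,y)$ of $(X^2,T\times T)$ has orbit closure $X^2$, whose support $\supp(X,T)\times\supp(X,T)$ meets $X^2\setminus\Delta_X$, so Proposition~\ref{prop:Banach-proximal-equi} (or, in the paper's phrasing, Proposition~\ref{prop:orbit-ubd}) rules out Banach proximality.

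Where you go astray is in the first-category step. You write that ``being $F_{\sigma\delta}$ and avoiding a dense $G_\delta$ does not by itself force first category,'' and then propose elaborate detours through the $F_{\sigma\delta}$ layers or Proposition~\ref{prop:BP-first-category}. This is a misconception: \emph{any} set disjoint from a dense $G_\delta$ is automatically of first category. Indeed, if $G=\bigcap_n U_n$ is dense with each $U_n$ open, then each $U_n\supset G$ is open and dense, so $X^2\setminus U_n$ is closed with empty interior; hence $X^2\setminus G=\bigcup_n(X^2\setminus U_n)$ is a countable union of nowhere dense sets, and any subset of it is first category. Since $(X,T)$ is weakly mixing, $Trans(X^2,T\times T)$ is a dense $G_\delta$ in the compact metric space $X^2$, and your disjointness conclusion immediately gives $BP(X,T)\subset X^2\setminus Trans(X^2,T\times T)$, which is first category. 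No appeal to the $F_{\sigma\delta}$ structure, invariance, or Proposition~\ref{prop:BP-first-category} is needed; the paper's proof accordingly stops right after establishing disjointness.
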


\begin{proof}
If $\supp(X,T)$ is a singleton, then by Theorem~\ref{thm:strongly-proximal}, we have that $BP(X,T)=X^2$.
Now we assume that $\supp(X,T)$ is not a singleton.
Let $(x,y)$ be a transitive point of $(X^2,T\times T)$. 
Since $\supp(X^2,T\times T)=\supp(X,T)\times \supp(X,T)$,
we choose a point $(u,v)\in\supp(X^2,T\times T)\setminus \Delta_X$.
Let $U\times V$ be an open neighborhood of $(u,v)$ with $U\cap V=\emptyset$.
By Lemma~\ref{prop:orbit-ubd}, $N((x,y),U\times V)$ has positive upper Banach density.
Thus $(x,y)$ can not be Banach proximal.
\end{proof}

A dynamical system $(X,T)$ also induces a system on the hyperspace naturally~\cite{BS75}.
Let $K(X)$ be the hyperspace on $X$, i.e., the space of non-empty closed subsets of $X$ equipped with
the Hausdorff metric $d_H$ defined by
\[d_H(A,B)=\max\Bigl\{\max_{x\in A}\min_{y\in B} d(x,y),\ \max_{y\in B}\min_{x\in A}
d(x,y)\Bigr\}\ \text{for}\ A,B\in K(X).\]
The transformation $T$ induces natural a continuous self-map $T_K$ on the hyperspace $K(X)$ defined by
\[T_K(C)=TC,\ \text{for}\ C\in K(X).\]
Then $(K(X),T_K)$ is also a dynamical system.
It is natural to ask when $(K(X),T_K)$ is proximal.
Note that if a closed subset $Y$ of $X$ is strongly invariant (that is $TY=Y$),
then $Y$ is a fixed point in $(K(X),T_K)$.
If $(K(X),T_K)$ is proximal and $T$ is surjective, then $(X,T)$ must be trivial.
The following proposition shows that if $(K(X),T_K)$ is proximal then $(X,T)$ is ``almost'' trivial.
Recall that for $K\subset X$, if for every $\ep > 0$
there is $n\in\N$ with $\diam(T^n K)<\ep$, then we call $K$ a \emph{uniformly proximal set} ~\cite{AGHSY10}.
\begin{prop}
Let $(X,T)$ be a dynamical system. Then the following conditions are equivalent:
\begin{enumerate}
  \item $(K(X),T_K)$ is proximal;
  \item $\bigcap_{n=0}^\infty T^nX$ is a singleton;
  \item $X$ is a uniformly proximal set.
\end{enumerate}
\end{prop}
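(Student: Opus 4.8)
The plan is to prove the cycle of implications $(1)\Rightarrow(2)\Rightarrow(3)\Rightarrow(1)$, working with the decreasing sequence of compact sets $X_n:=T^nX$. First observe that since $T$ is continuous and $X$ is compact, each $X_n$ is a non-empty closed subset of $X$ with $X_{n+1}=T X_n\subset X_n$, so $\{X_n\}$ is a nested sequence in $K(X)$ and its intersection $X_\infty:=\bigcap_{n=0}^\infty T^nX$ is non-empty and closed. The key elementary fact, which I would establish at the outset, is that $X_n\to X_\infty$ in the Hausdorff metric $d_H$ as $n\to\infty$; this is the standard statement that a decreasing sequence of compacta converges to its intersection, and it is precisely what connects the dynamics on $K(X)$ to the geometry of the $X_n$.

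For $(3)\Rightarrow(1)$, suppose $X$ is a uniformly proximal set, so for every $\ep>0$ there is $n$ with $\diam(T^nX)<\ep$. Given any two closed sets $A,B\in K(X)$, note $T^nA\subset T^nX$ and $T^nB\subset T^nX$, so $d_H(T^n_K A, T^n_K B)=d_H(T^nA,T^nB)\le \diam(T^nX)$; hence $\liminf_{n}d_H(T^n_K A,T^n_K B)=0$ and the pair $(A,B)$ is proximal in $(K(X),T_K)$. Since $A,B$ were arbitrary, $(K(X),T_K)$ is proximal.

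For $(1)\Rightarrow(2)$ I would argue by contraposition. If $X_\infty$ contains two distinct points $p,q$, then since $T X_\infty=X_\infty$ (the intersection is strongly invariant, as $T$ maps each $X_{n+1}$ onto $X_{n+2}$ and a short compactness argument gives $TX_\infty=X_\infty$) the set $X_\infty$ is a fixed point of $T_K$. I would exhibit two closed sets whose $T_K$-orbits stay uniformly separated; the clean choice is $A=\{p\}$ together with a set adapted to $q$, but the robust approach is to use that $X_\infty$ itself, being $T_K$-fixed with $\diam(X_\infty)>0$, prevents any pair from contracting: for any $C\in K(X)$ one has $T^n_K C\subset X_n\to X_\infty$, so the orbit closure of any point in $K(X)$ is "pushed toward" $X_\infty$, and one produces two points of $K(X)$ whose images remain at Hausdorff distance bounded below by $d(p,q)/2$. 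Equivalently, and more cleanly for $(2)\Rightarrow(3)$: if $X_\infty=\{x_0\}$ is a singleton, then $d_H(X_n,\{x_0\})\to0$ forces $\diam(X_n)=\diam(T^nX)\to 0$, which is exactly uniform proximality of $X$, giving $(2)\Rightarrow(3)$ immediately.

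I expect the main obstacle to be the rigorous verification that $T_K$-proximality forces $X_\infty$ to be a singleton, i.e. $(1)\Rightarrow(2)$. The subtlety is that proximality of $(K(X),T_K)$ only guarantees contraction along \emph{some} subsequence of times for each pair, whereas $\diam(T^nX)$ is monotone nonincreasing; one must convert the existence of a single well-chosen proximal-failing pair into a genuine obstruction. The resolution is to test proximality on the pair $(X_\infty,\{x\})$ for a suitable $x\in X_\infty$ with $d(x,\cdot)$ realizing the diameter: because $X_\infty$ is $T_K$-fixed one has $T^n_K X_\infty=X_\infty$ for all $n$, so $d_H(T^n_K X_\infty, T^n_K\{x\})\ge \sup_{y\in X_\infty}d(y, T^n x)$, and if $\diam(X_\infty)>0$ this distance cannot tend to zero along any subsequence, contradicting proximality. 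This pins down the argument and completes the cycle.
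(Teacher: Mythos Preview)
Your argument is correct, and the cycle closes as you describe. The final paragraph contains the real content of $(1)\Rightarrow(2)$: once $X_\infty$ is known to be $T_K$-fixed, the pair $(X_\infty,\{x\})$ with $x\in X_\infty$ has $d_H(T_K^n X_\infty,T_K^n\{x\})=\sup_{y\in X_\infty}d(y,T^nx)\ge \tfrac12\diam(X_\infty)$ for every $n$ (by the triangle inequality applied to a diameter-realizing pair), so proximality fails if $\diam(X_\infty)>0$. Note that the specific choice of $x$ ``realizing the diameter'' is irrelevant here, since the lower bound holds uniformly for every point of $X_\infty$; what matters is that $T^n x$ stays in $X_\infty$.

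Your route differs from the paper's. The paper never computes a Hausdorff distance: for $(1)\Rightarrow(2)$ it invokes the characterization ``proximal $\Leftrightarrow$ there is a fixed point which is the unique minimal point'' (Theorem~\ref{thm:proximal-SP}) twice---first to produce a $T$-fixed point $x_0\in X$ via the embedding $x\mapsto\{x\}$, then to conclude that the $T_K$-fixed point $X_\infty$ must equal $\{x_0\}$---and for $(3)\Rightarrow(1)$ it shows only that $(X,\{x\})$ is proximal for the limit point $x$ of $T^{k_n}X$, deducing global proximality from the same characterization. Your approach is more elementary and self-contained, avoiding any appeal to the structure theory of proximal systems; the paper's is shorter once that theorem is available and makes the role of $X_\infty$ as a second fixed point of $T_K$ the conceptual punchline. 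Both $(2)\Rightarrow(3)$ arguments are essentially identical.
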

\begin{proof}
(1)$\Rightarrow$(2)
First note that we can regard $(X,T)$ as a subsystem of $(K(X),T_K)$ by identifying $x\in X$ with $\{x\}\in K(X)$.
By Theorem~\ref{thm:proximal-SP}, there exists a fixed point $x\in X$.
Let $Y=\bigcap_{n=0}^\infty T^nX$.
It is clear that $Y$ is closed and  strongly invariant.
Then $Y$ is a fixed point in $(K(X),T_K)$.
By Theorem~\ref{thm:proximal-SP} again, we have $Y=\{x\}$.

(2)$\Rightarrow$(3)
The sequence $\{T^n X\}$ is decreasing under the inclusion relation since $TX\subset X$.
By the compactness of  $X$, we have $\lim_{n\to\infty} \diam (T^n X)=0$.

(3)$\Rightarrow$(1)
There exists a sequence $\{k_n\}$ in $\N$ such that $\lim_{n\to\infty} \diam (T^{k_n} X)=0$.
By the compactness of $X$, there exists a point $x\in X$ such that
$\lim_{n\to\infty}d_H(T^{k_n} X,\{x\})=0$, that is $(X,\{x\})$ is proximal in $(K(X),T_K)$.
Therefore $(K(X),T_K)$ is proximal.
\end{proof}

\section{Banach scrambled sets}
Let $(X,T)$ be a dynamical system.
A pair of points $(x,y)\in X$ is called \emph{asymptotic} if $\lim_{n\to\infty}d(T^nx,T^ny)=0$.
A subset $S\subset X$ containing at least two points is called \emph{scrambled}
(\emph{syndetically scrambled}, \emph{Banach scrambled}, respectively)
if for any two distinct points $x,y\in S$, $(x,y)$ is proximal (syndetically proximal, Banach proximal, respectively)
but not asymptotic.
Note that a subset $S\subset X$ is scrambled (syndetically scrambled, Banach scrambled, respectively) for $(X,T)$
if and only if so is for $(X,T^k)$ for every $k\in\N$.

There are examples of  dynamical systems with the whole space being a scrambled set (see~\cite{HY01} and~\cite{HY02}).
For these examples, by Theorem~\ref{thm:proximal-SP} the whole space is also a syndetically scrambled set.
We show that the whole space also can be a Banach scrambled set.
Before the construction we need some  concepts . Recall that a dynamical system $(X,T)$ is \emph{scattering}
if for every minimal system $(Y,S)$ the product system $(X\times Y,T\times S)$ is transitive.
A point $x\in X$ is \emph{equicontinuous} if for every $\ep>0$, there is $\delta>0$
with the property that $d(x,y)<\delta$ implies $d(T^nx,T^ny)<\ep$
for every $y\in X$ and every $n\in\N$.
A transitive system $(X,T)$ is called \emph{almost equicontinuous} if there exists some equicontinuous point in $X$.
It is known that every almost equicontinuous transitive system $(X,T)$ is uniformly rigid, that is
for every $\ep>0$ there is $n\in\N$ such that $d(x,T^nx)<\ep$ for every $x\in X$.

\begin{thm}
There exists a dynamical system with the whole space being a Banach scrambled set.
\end{thm}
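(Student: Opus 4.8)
The plan is to read the statement through Theorem~\ref{thm:strongly-proximal}. The whole space is Banach scrambled precisely when every pair of distinct points is Banach proximal but not asymptotic, i.e. when $BP(X,T)=X^2$ and $X^2$ contains no off-diagonal asymptotic pair. By Theorem~\ref{thm:strongly-proximal} the first condition is equivalent to $\supp(X,T)$ being a single point $\{z\}$; then $z$ is a fixed point, $\delta_z$ is the only invariant measure, and by Proposition~\ref{prop:support-min-BD1} every orbit meets each neighbourhood of $z$ in a set of Banach density one. The danger is that an orbit actually converges to $z$: such an orbit would be asymptotic to $z$. So the problem becomes to build a system whose orbits accumulate at $z$ with Banach density one but never converge to $z$, and which has no asymptotic pair at all.

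To kill asymptotic pairs I would make $(X,T)$ uniformly rigid. If $(x,y)$ were asymptotic with $3\delta=d(x,y)>0$, pick $N$ with $d(T^nx,T^ny)<\delta$ for $n\ge N$ and a rigidity time $m\ge N$ (these are cofinal) with $\sup_w d(T^m w,w)<\delta$; then $d(x,y)\le d(x,T^mx)+d(T^mx,T^my)+d(T^my,y)<3\delta$, a contradiction. Thus uniform rigidity gives the ``not asymptotic'' half for free, and by the quoted fact that every almost equicontinuous transitive system is uniformly rigid it suffices to produce an almost equicontinuous transitive system with one-point support.

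Such a system cannot be a subshift: rigidity of a generating point propagates under the shift to genuine periodicity, so a transitive infinite subshift is never uniformly rigid; and one-point support forces the $1$'s to have Banach density zero, whose orbit closure contains an isolated finite block, asymptotic to the fixed sequence. Instead I would build $X$ on a perfect space as the orbit closure of a distinguished point $x_0$, using an odometer-type scale of periods $q_1\mid q_2\mid\cdots\to\infty$ only as rigidity times: arrange that $T^{q_k}$ displaces every point of $X$ by less than $2^{-k}$ and that $x_0$ is an equicontinuity point, so that $(X,T)$ is almost equicontinuous and transitive. Simultaneously arrange that along each block $[0,q_k)$ the orbit of $x_0$ lies within $2^{-k}$ of a fixed point $z$ except on a set of times of density tending to $0$, so that every weak-$*$ limit of $\frac1N\sum_{i<N}\delta_{T^ix_0}$ equals $\delta_z$ and $\supp(X,T)=\{z\}$.

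The main obstacle is exactly this reconciliation. Uniform rigidity forces every point, $z$ included, to return to within $2^{-k}$ of itself at the common time $q_k$, so points far from $z$ keep coming back near themselves, while one-point support forces all of them to spend a density-one set of times near $z$. These coexist only because the return times $q_k$ form a set of density zero, and the odometer furnishing them must not become a factor, since a nontrivial equicontinuous factor would yield a non-atomic invariant measure and contradict one-point support. The delicate work is to choose the period and gap data so that the excursion times genuinely have density zero in every window (forcing $\delta_z$ to be the unique invariant measure, hence $BP(X,T)=X^2$ by Theorem~\ref{thm:strongly-proximal}) while the recurrence at the times $q_k$ stays uniform enough to make $x_0$ an equicontinuity point. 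Once both hold, uniform rigidity supplies the absence of asymptotic pairs and Theorem~\ref{thm:strongly-proximal} supplies Banach proximality of every pair, so $X$ itself is a Banach scrambled set.
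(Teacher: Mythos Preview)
Your reduction is exactly right and matches the paper's core insight: the whole space is Banach scrambled iff $BP(X,T)=X^2$ and there are no off-diagonal asymptotic pairs; by Theorem~\ref{thm:strongly-proximal} the first condition is equivalent to $\supp(X,T)$ being a singleton, and uniform rigidity disposes of asymptotic pairs by the triangle-inequality argument you give. So the target ``uniformly rigid system with one-point support'' is precisely what the paper builds too. (A small remark: you only need uniform rigidity plus one-point support; you need not insist that the final system itself be almost equicontinuous, which is fortunate since factor maps can destroy almost equicontinuity.)

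Where you and the paper diverge is in how to produce such a system. The paper does \emph{not} construct one from scratch. It quotes \cite[Corollary~4.14]{AG01} for the existence of an almost equicontinuous scattering system $(X,T)$ that is not minimal; since an almost equicontinuous E-system is minimal \cite{GW93}, the support $Y=\supp(X,T)$ is a proper closed invariant set. Collapsing $Y$ to a point via $R=Y\times Y\cup\Delta_X$ gives a factor $(\tilde X,\tilde T)$ whose support is a singleton (hence strongly proximal) and which inherits uniform rigidity from $(X,T)$. That is the whole proof.

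The genuine gap in your proposal is that the direct construction is only described, not executed. You correctly list the constraints (rigidity times $q_1\mid q_2\mid\cdots$, Banach-density-one visits to $z$, equicontinuity of the base point, and the need to prevent a nontrivial odometer factor from surviving), and you correctly flag the tension between rigidity and one-point support. But the sentence ``the delicate work is to choose the period and gap data so that\ldots'' is where the actual proof would have to begin, and you stop there. As it stands the proposal is a correct outline of what must be verified, not a verification; the paper's quotient trick lets one avoid this construction altogether.
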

\begin{proof}
By~\cite[Corollary 4.14]{AG01}, there exists an almost equicontinuous scattering system $(X,T)$ which is not minimal.
Let $Y=\supp(X,T)$. Then $Y$ is a proper subset of $X$, since every almost equicontinuous E-system is minimal~\cite{GW93}.
Let $R=Y\times Y\cup \Delta_X$. Then $R$ is an invariant closed equivalence relation in $X$.
We collapse $Y$ to a single point, i.e., we take the quotient space $\tilde X=X/R$
and denote by $\tilde{T}$ the map induced by $T$ on $X$.
Then $(\tilde{X},\tilde{T})$ is a factor of $(X,T)$.
Then $(\tilde{X},\tilde{T})$ is strongly proximal, since the support of $(\tilde{X},\tilde{T})$ is a singleton.
Note that $(X,T)$ is uniformly rigid, so is $(\tilde{X},\tilde{T})$.
Every non-diagonal pair in $\tilde{X}^2$ is recurrence in $(\tilde{X}^2,\tilde{T}\times \tilde{T})$,
and then not asymptotic.
Hence, the whole space $\tilde{X}$ is a Banach scrambled set.
\end{proof}

We say that a subset $K$ of $X$ is a \emph{Mycielski set} if it can be presented as a countable union of Cantor sets.
For convenience, we restate here a version of Mycielski¡¯s theorem (\cite{M64}, Theorem 1) which we shall use later.

\begin{thm}[Mycielski Theorem]
Let $X$ be a complete second countable metric space without isolated points.
If $R$ is a dense $G_\delta$ subset of $X^2$,
then there exists a dense Mycielski subset $K$ of  $X$ such that $K^2\subset R\bigcup\Delta_X$.
\end{thm}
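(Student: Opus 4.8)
The plan is to realize $K$ as a countable union of Cantor sets produced by a simultaneous Cantor scheme (a fusion argument), with the relation $R$ built into the scheme. First I would make two harmless reductions. Writing $R=\bigcap_{m\ge 1}U_m$ with each $U_m$ open and dense in $X^2$, I replace $U_m$ by $\bigcap_{j\le m}U_j$, so that the sequence is decreasing; then membership in $U_m$ for all sufficiently large $m$ already forces membership in $R$. Second, fixing a countable base $\{W_j\}_{j\ge 1}$ of nonempty open sets (using second countability), it suffices to produce for each $j$ a Cantor set $C_j\subset W_j$ such that whenever $x\ne y$ lie in $\bigcup_j C_j$ one has $(x,y)\in R$; then $K:=\bigcup_j C_j$ is a Mycielski set, and it is dense because it meets every $W_j$.

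The combinatorial core is a nested scheme of open cells. I would construct, by induction on the level $n$, finitely many nonempty open cells $V^{(j)}_s$ indexed by $j\le n$ and $s\in\{0,1\}^n$, the $j$-th scheme being switched on at stage $j$ with $V^{(j)}_{\emptyset}\subset W_j$, subject to: (i) $\overline{V^{(j)}_{s0}}$ and $\overline{V^{(j)}_{s1}}$ are disjoint and contained in $V^{(j)}_s$; (ii) $\diam V^{(j)}_s<2^{-|s|}$; and (iii) for every pair of level-$n$ cells with disjoint closures, the product of those two closures is contained in $U_n$. Splitting each cell into two disjoint open subcells is possible precisely because $X$ has no isolated points, and the shrinking diameters together with completeness guarantee that along each branch $\xi\in 2^\omega$ the nested closed sets $\bigcap_n\overline{V^{(j)}_{\xi|n}}$ meet in a single point; this defines a homeomorphism $2^\omega\to C_j$ onto a Cantor set inside $W_j$.

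To arrange condition (iii) at a given stage, I would first split every cell to obtain candidate level-$(n+1)$ cells, then process the finitely many pairs of candidates with disjoint closures one at a time: for such a pair $(A,B)$ the box $A\times B$ is a nonempty open subset of $X^2$, so by density of the open set $U_{n+1}$ I can pick nonempty open $A'\subset A$ and $B'\subset B$ with $\overline{A'}\times\overline{B'}\subset U_{n+1}$ and shrink $A,B$ accordingly. Since later shrinkings only decrease cells, the conditions secured for earlier pairs persist, so finitely many steps suffice. Finally, given distinct $x,y\in K$, say $x\in C_i$ and $y\in C_j$, the vanishing diameters force the level-$n$ cells containing $x$ and $y$ to have disjoint closures for all large $n$; condition (iii) then yields $(x,y)\in U_n$ for all such $n$, whence $(x,y)\in\bigcap_m U_m=R$ by the monotonicity established in the first reduction.

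I expect the main obstacle to be the bookkeeping of the simultaneous construction: keeping the countably many schemes coherent while, at each finite stage, forcing the products of all disjoint cell-pairs into the current $U_n$ without destroying the nesting, disjointness, and diameter constraints. The delicate point is that cells from different schemes may overlap, so condition (iii) must be imposed only on disjoint pairs, and one must verify that any two genuinely distinct limit points are nonetheless separated at some finite level, which follows from the vanishing diameters, so that the pair was indeed caught by (iii).
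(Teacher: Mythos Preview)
Your sketch is a correct and standard proof of Mycielski's theorem via a simultaneous Cantor scheme. The paper, however, does not prove this theorem at all: it merely restates a version of Mycielski's result and cites \cite{M64} (Theorem~1), so there is no proof in the paper to compare against.

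One small bookkeeping point worth making explicit in your write-up: since the relation $R$ is not assumed symmetric, when you process a pair of disjoint cells $A,B$ at stage $n+1$ you must shrink so that both $\overline{A'}\times\overline{B'}$ and $\overline{B'}\times\overline{A'}$ lie in $U_{n+1}$ (equivalently, process ordered pairs). This is easily absorbed into your finite shrinking loop, but as stated your condition (iii) only guarantees one of the two inclusions, while $K^2\subset R\cup\Delta_X$ requires both $(x,y)$ and $(y,x)$ to land in $R$.
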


A dynamical system $(X,T)$ is said to be \emph{Li-Yorke chaotic} if it has an uncountable scrambled set.
Using the Mycielski Theorem, it is shown that many kinds of dynamical systems are Li-Yorke chaotic,
including weakly mixing systems~\cite{I89}, systems with positive topological entropy~\cite{BGKM02},
and infinite transitive systems with at least one periodic point~\cite{HY02}.
But in general, the Banach proximal relation is $F_{\sigma\delta}$ and may not $G_\delta$,
so we can not apply the Mycielski Theorem to show the existence of Banach scrambled sets directly.
For example, by Proposition~\ref{prop:wm-BP}
the Banach proximal relation of the full shift $(\Sigma,\sigma)$ is a first category subset of $\Sigma\times\Sigma$.
However, inspired by Proposition~4.14 in~\cite{MP13}, we will show in Corollary~\ref{cor:full-shift-BP}
that there is a dense Mycielski,
$\sigma$-invariant Banach scrambled subset
$S$ of $\Sigma$ for $\sigma$.

\begin{thm}\label{thm:SP-transitive-BS}
Let $(X,T)$ be transitive system with $X$ being infinite.
If $(X,T)$ is strongly proximal, then
there is a dense Mycielski, $T$-invariant Banach scrambled subset $S$ of $X$.
\end{thm}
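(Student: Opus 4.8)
The plan is to construct the dense Mycielski set by building a suitable dense $G_\delta$ subset $R$ of $X^2$ to which the Mycielski Theorem can be applied, and then correcting for the fact that Banach proximality itself is only $F_{\sigma\delta}$. The key observation is that since $(X,T)$ is strongly proximal, Theorem~\ref{thm:strongly-proximal} gives $BP(X,T)=X^2$, so \emph{every} pair is automatically Banach proximal. This removes the usual obstacle: we do not need $R$ to encode proximality at all, because that comes for free. What we actually need from the scrambled condition is that distinct points be \emph{not asymptotic}, and this is the only genuine constraint to arrange.

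First I would set
\[
R=\bigl\{(x,y)\in X^2:\ \limsup_{n\to\infty} d(T^nx,T^ny)>0\bigr\},
\]
the set of non-asymptotic pairs, and show that $R$ is a dense $G_\delta$ subset of $X^2$. For the $G_\delta$ part, observe that for a fixed $\ep>0$ the set $\{(x,y):\ d(T^nx,T^ny)>\ep \text{ for infinitely many }n\}$ is $G_\delta$, and intersecting over a sequence $\ep\to 0$ keeps it $G_\delta$. For density, I would use transitivity: since $X$ is infinite and $(X,T)$ is transitive, $(X^2,T\times T)$ has a residual set of recurrent pairs off the diagonal — more directly, strong proximality forces $X$ to be non-trivial with no isolated points (as $X$ is infinite and transitive), and the orbit of a transitive point of $X^2$, together with the presence of non-diagonal recurrent pairs, lets me approximate any target pair by a non-asymptotic one. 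The cleanest route is to note that asymptotic pairs form a first-category set in a transitive infinite system, so its complement $R$ is residual, hence a dense $G_\delta$.

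Next I would verify that $X$ satisfies the hypotheses of the Mycielski Theorem: $X$ is a compact (hence complete) metric space, second countable, and has no isolated points. The no-isolated-points condition follows because an infinite transitive system cannot have isolated points (a transitive point has dense orbit, forcing $X$ to be perfect). Applying the Mycielski Theorem to $R$ yields a dense Mycielski set $K\subset X$ with $K^2\subset R\cup\Delta_X$, so any two distinct points of $K$ are non-asymptotic, and by strong proximality they are also Banach proximal. Thus $K$ is a dense Mycielski Banach scrambled set.

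The remaining issue — and the main point requiring care — is to upgrade $K$ to a \emph{$T$-invariant} Banach scrambled set $S$. The natural choice is $S=\bigcup_{n\ge 0}T^n K$, which is $T$-invariant and still dense. Here the subtlety is twofold: first, $S$ should remain Mycielski, which holds since each $T^nK$ is a countable union of continuous (hence Cantor or finite) images of Cantor sets and a countable union of Mycielski sets is Mycielski; second, and more delicately, I must check that $S$ is still scrambled, i.e. that distinct points $T^m x$ and $T^n y$ (with $x,y\in K$) form a Banach proximal but non-asymptotic pair. Banach proximality is immediate from $BP(X,T)=X^2$. The hard part will be the non-asymptotic condition across different iterates: I would handle this by choosing $R$ more carefully at the outset so that it is $(T\times T)$-invariant and closed under the ``shifts'' $(x,y)\mapsto(T^m x,T^n y)$, for instance by replacing $R$ with the smaller residual set of pairs $(x,y)$ for which $(T^m x,T^n y)$ is non-asymptotic for all $m,n\ge 0$; since each such condition defines a dense $G_\delta$ and there are only countably many pairs $(m,n)$, the intersection is still a dense $G_\delta$, and applying Mycielski to this refined $R$ makes the resulting orbit-union automatically scrambled.
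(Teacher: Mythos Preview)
Your overall strategy matches the paper's: strong proximality makes every pair Banach proximal, so the only constraint to arrange via Mycielski is non-asymptoticity, and one then takes the orbit-union for invariance. There are, however, two genuine gaps.

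First, the set of non-asymptotic pairs is \emph{not} a $G_\delta$ set in general, and your argument for it contains a sign error: $\limsup_n d(T^nx,T^ny)>0$ means $d(T^nx,T^ny)>\ep$ infinitely often for \emph{some} $\ep>0$, so one must take a \emph{union} over $\ep\to 0$, not an intersection, and a countable union of $G_\delta$ sets need not be $G_\delta$. Your fallback (``asymptotic pairs are first category, hence the complement is residual, hence a dense $G_\delta$'') conflates residual with $G_\delta$. The paper sidesteps this by working instead with the pairs $(x,y)$ that are \emph{recurrent} in $(X^2,T\times T)$: recurrence is a genuine $G_\delta$ condition, any non-diagonal recurrent pair is automatically non-asymptotic, and density follows because for a transitive point $z$ of $(X,T)$ the set $\{(T^iz,T^jz):i,j\ge 0\}$ is dense in $X^2$ and each such pair is recurrent (by continuity, since $z$ itself is recurrent). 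Note also that you cannot appeal to ``a transitive point of $X^2$'': $(X^2,T\times T)$ is transitive only when $(X,T)$ is weakly mixing, which is not assumed.

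Second, your refined relation $\bigcap_{m,n}\{(x,y):(T^mx,T^ny)\text{ non-asymptotic}\}$ does not handle the case where two distinct points of $S=\bigcup_n T^nK$ arise from the \emph{same} $x\in K$: if $a=T^mx$ and $b=T^nx$ with $m\ne n$, you need $(T^mx,T^nx)$ non-asymptotic, but since $(x,x)\in\Delta_X$ your relation says nothing here. The paper closes this gap by also intersecting with $Trans(X,T)\times Trans(X,T)$; then every $x\in K$ is transitive, hence recurrent, so $(T^mx,T^nx)$ is a recurrent point of $(X^2,T\times T)$ and thus not asymptotic (and $T^mx\ne T^nx$ since a transitive point in an infinite system has infinite orbit). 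Once you replace ``non-asymptotic'' by ``recurrent in $X^2$'' and add the transitivity condition, your plan becomes exactly the paper's proof.
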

\begin{proof}
For $m,n\in\Z$, put
\[R_{m,n}=\{(x,y)\in X^2: (T^mx,T^ny)\text{ is a recurrent point in }(X^2,T\times T)\}\]
Note that
\[R_{m,n}=\bigcap_{j=1}^\infty \bigcap_{N=1}^\infty\bigcup_{i=N}^\infty
\bigl\{(x,y)\in X^2: d(T^{i+m} x , T^m x)<\tfrac 1 j, d(T^{i+n} y ,T^n y) <\tfrac 1j\bigr\},\]
which implies that $R_{m,n}$ is a $G_\delta$ subset of $X^2$.
Choose a transitive point $z\in X$.
Then $R_{m,n}$ is dense in $X^2$ since $\{(T^iz,T^jz): i,j\in\Z\}\subset R_{m,n}$ for every $m,n\in\Z$.
Let 
\[R=Trans(X,T)\times Trans(X,T) \cap \bigcap_{m,n\in\Z}R_{m,n}.\]
Then $R$ is a dense $G_\delta$ subset of $X^2$.
By the Mycielski Theorem, there exists a dense Mycielski subset $S$ of $X$ such that $S\times S\subset R\bigcup \Delta_X$.
Let $K=\bigcup_{i=0}^\infty T^iS$.
Since $S$ is scrambled, $T|_S$ is injective,
and then $K$ is also a dense Mycielski subset $S$ of $X$.
For two distinct point $a,b\in K$, there exist $x,y\in S$ and $m,n\in\Z$ such that
$T^mx=a$ and $T^ny=n$.
If $x\neq y$, then $(a,b)$ is a recurrent point in $(X^2,T\times T)$ since $(x,y)\in R_{m,n}$.
If $x=y$, then $(a,b)$ is a recurrent point in $(X^2,T\times T)$ since $x\in Trans(X,T)$.
By the strong proximality of $(X,T)$, $(a,b)$ is Banach proximal, and then it is Banach scrambled.
\end{proof}

\begin{thm}\label{thm:sp-dmbcs}
Let $(X,T)$ be a dynamical system with $X$ being infinite.
Assume that there exists a sequence of strongly proximal transitive subsystems $(X_n,T)$ of $(X,T)$
such that $\overline{\bigcup_{n=1}^\infty X_n}=X$ and $X_i\cap X_j\neq\emptyset$ for $i,j\in\N$.
Then there exists a dense Mycielski, $T$-invariant Banach scrambled subset $S$ of $X$.
\end{thm}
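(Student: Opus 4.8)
The plan is to reduce the statement to Theorem~\ref{thm:SP-transitive-BS}, applied separately to each subsystem $(X_n,T)$, and then to glue the resulting scrambled sets together by exploiting a \emph{common} fixed point shared by all the $X_n$. As the discussion preceding the theorem emphasizes, the Banach proximal relation is only $F_{\sigma\delta}$ and may be meager, so one cannot feed a dense $G_\delta$ relation on $X$ into the Mycielski Theorem directly; the point of the hypotheses is that the strongly proximal pieces $X_n$ \emph{do} have $BP(X_n,T)=X_n^2$, and that the condition $X_i\cap X_j\neq\emptyset$ forces these pieces to fit together coherently.

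First I would locate the common fixed point. Since each $(X_n,T)$ is strongly proximal, it is proximal, so by Theorem~\ref{thm:proximal-SP} it has a unique minimal point, which is a fixed point $z_n$ with $\supp(X_n,T)=\{z_n\}$. Given indices $i,j$, choose $w\in X_i\cap X_j$; then $\overline{Orb(w,T)}$ is a subsystem of both $X_i$ and $X_j$, and the minimal set it contains must be $\{z_i\}$ and $\{z_j\}$ simultaneously. Hence $z_i=z_j$, and all the $z_n$ coincide with a single fixed point $z\in\bigcap_n X_n$. This step, driven entirely by the pairwise intersection hypothesis, is the crux of the whole argument.

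Next I would discard the finite $X_n$: a finite transitive system is a single periodic orbit, and being strongly proximal it equals $\{z\}$. Since $X$ is infinite, at least one $X_n$ is infinite, and as the finite ones contribute only the point $z$, we still have $\overline{\bigcup\{X_n:\ X_n\text{ infinite}\}}=X$. For each infinite $X_n$, Theorem~\ref{thm:SP-transitive-BS} yields a dense Mycielski, $T$-invariant Banach scrambled subset $S_n\subseteq X_n$, and inspection of that construction shows every point of $S_n$ is a transitive point of $(X_n,T)$. Setting $S=\bigcup_n S_n$, the set $S$ is $T$-invariant, it is Mycielski (a countable union of Mycielski sets is again a countable union of Cantor sets), and it is dense in $X$ because $\bigcup_n S_n$ is dense in $\bigcup_n X_n$.

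It then remains to check that $S$ is Banach scrambled. Take distinct $a,b\in S$, say $a\in S_i$ and $b\in S_j$. If $i=j$, the pair is Banach scrambled by the construction of $S_i$. If $i\neq j$, then since each $X_n$ is strongly proximal we have $BP(X_n,T)=X_n^2$ by Theorem~\ref{thm:strongly-proximal}, so $(a,z)\in BP(X_i,T)$ and $(b,z)\in BP(X_j,T)$; as $z$ is fixed, for every $\ep>0$ both $N(a,B(z,\ep/2))$ and $N(b,B(z,\ep/2))$ have Banach density one, and Lemma~\ref{lem:BD1} gives that $\{k:\ d(T^ka,T^kb)<\ep\}$ has Banach density one. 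Thus $(a,b)$ is Banach proximal. To see it is not asymptotic, observe that $a,b$ are transitive points of the infinite systems $X_i,X_j$, so $\overline{Orb((a,b),T\times T)}$ is an infinite transitive subsystem of $X^2$ having $(a,b)$ as a transitive point; an infinite transitive system has no isolated points, and in such a system a transitive point is a limit of its own orbit, hence recurrent. Since $a\neq b$, recurrence of $(a,b)$ forbids $d(T^ka,T^kb)\to0$, so the pair is not asymptotic. The main obstacle is conceptual rather than computational: recognizing that the global Mycielski approach is blocked and that one must instead descend to the strongly proximal subsystems and reassemble, the decisive technical point being the common-fixed-point reduction that makes cross-subsystem pairs Banach proximal.
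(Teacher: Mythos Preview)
Your overall strategy coincides with the paper's: extract a common fixed point $z\in\bigcap_n X_n$, discard the trivial pieces, apply Theorem~\ref{thm:SP-transitive-BS} inside each remaining $X_n$, take the union, and verify that cross-index pairs are Banach proximal by routing through $z$. The Banach-proximality half of the cross case is correct.

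The gap is in your non-asymptotic argument for $a\in S_i$, $b\in S_j$ with $i\neq j$. You assert that $\overline{Orb((a,b),T\times T)}$ is an ``infinite transitive system'' and hence perfect, so that its dense-orbit point $(a,b)$ is recurrent. But possessing a point with dense orbit is strictly weaker than the paper's notion of transitivity (that $N(U,V)$ is infinite for all nonempty open $U,V$), and only the latter forces perfectness in the infinite case. An orbit closure can be infinite yet have its generating point isolated and non-recurrent (e.g.\ $\{0\}\cup\{1/n:n\geq 1\}$ with $1/n\mapsto 1/(n+1)$). Nothing you have said rules out that the return times of $a$ to a neighborhood of $a$ in $X_i$ and those of $b$ to a neighborhood of $b$ in $X_j$ are disjoint, so joint recurrence of $(a,b)$ is unproven.

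The paper closes this gap by a different route. It first passes to a subfamily $\mathbb{I}$ in which the $X_i$ are perfect and pairwise \emph{distinct}. Then for $i\neq j$ in $\mathbb{I}$ it observes that $a$ is a transitive point of the perfect system $X_i$, hence recurrent, so $\omega(a,T)=X_i$; likewise $\omega(b,T)=X_j$. Since an asymptotic pair must share the same $\omega$-limit set, $X_i\neq X_j$ immediately forbids asymptoticity. Note that your version omits the de-duplication step, so even after replacing your recurrence claim by this $\omega$-limit argument you would still owe an explanation of the case $X_i=X_j$ with $i\neq j$, where $S_i$ and $S_j$ come from two separate invocations of Theorem~\ref{thm:SP-transitive-BS} and nothing prevents an asymptotic pair straddling them.
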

\begin{proof}
Since $(X_n,T)$ is strongly proximal,
there exists a fixed point in $X_n$ which is the unique minimal point in $(X_n,T)$.
For $i\neq j$, $(X_i\cap X_j,T)$ is a subsystem of $(X,T)$ and then it contains some minimal point.
Then there is a fixed point $p\in X$ such that $p\in \bigcap_{n=1}^\infty X_n$.
Since a strongly proximal transitive system is either perfect or singleton.
We can choose a non-empty subset $\mathbb{I}$ of $\N$
such that  $\overline{\bigcup_{i\in\mathbb{I}} X_i}=X$,
$X_n$ is perfect and $X_i\neq X_j$ for every $n,i,j\in \mathbb{I}$ with $i\neq j$.
By Theorem~\ref{thm:SP-transitive-BS},
for every $i\in\mathbb{I}$ there is a dense Mycielski, $T$-invariant Banach scrambled subset $S_i$ of $X_i$.
Let $S=\bigcup_{i\in\mathbb{I}}S_i$.
Then $S$ is a dense Mycielski, $T$-invariant subset of $X$.
Fix any two distinct point $x,y\in S$.
There are $i,j\in\mathbb{I}$ such that $x\in S_i$ and $y\in S_j$.
If $i=j$, then $(x,y)$ is Banach proximal but not asymptotic.
Now assume that $i\neq j$.
Since every $(X_n,T)$ is strongly proximal, both $(x,p)$ and $(y,p)$ is Banach proximal,
and then $(x,y)$ is also Banach proximal.
By the proof of Theorem~\ref{thm:SP-transitive-BS},
we also have $x\in Trans(X_i,T)$ and $y\in Trans(X_j,T)$,
which implies that $(x,y)$ can not asymptotic since $X_i\neq X_j$.
Hence $S$ is Banach scrambled.
\end{proof}

\begin{thm}
Let $(X_P,\sigma)$ be a weakly mixing spacing shift.
Then there exists a dense Mycielski, $\sigma$-invariant Banach scrambled subset $S$ of $X_P$.
\end{thm}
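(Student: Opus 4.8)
The plan is to reduce everything to Theorem~\ref{thm:sp-dmbcs}: it suffices to exhibit a countable family of infinite, strongly proximal, transitive subsystems of $(X_P,\sigma)$ whose union is dense in $X_P$ and which pairwise intersect. The common point will be the fixed point $0^\infty$, which lies in $X_P$ because $X_P$ is hereditary; note also that $X_P$ is infinite, since a weakly mixing system with more than one point is infinite. The only feature of weak mixing I will use is that, by the theorem of Lau and Zame recalled above, $P$ is thick, i.e. $P$ contains arbitrarily long intervals.

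For each admissible word $u\in L(X_P)$ containing at least one $1$, I would build a recurrent point $x_u\in X_P$ having $u$ as a prefix by a doubling-with-gaps construction. Set $C_0=u$, then $C_{k+1}=C_k\,0^{g_k}\,C_k$, and let $x_u=\lim_k C_k$. The gap $g_k$ is chosen so that the set of differences between the $1$'s of the two copies of $C_k$, which is contained in the interval $[\,g_k+1,\ g_k+2|C_k|\,]$, lies inside a single interval of $P$; thickness of $P$ guarantees such a $g_k$, and then an induction shows each $C_k$, and hence $x_u$, is admissible, so $x_u\in X_P$. Since every prefix $C_k$ reappears infinitely often in $x_u$, the point $x_u$ is recurrent, so $Y_u:=\overline{Orb(x_u,\sigma)}$ is a transitive subsystem with $x_u$ as a transitive point; as every finite word occurring in $x_u$ occurs infinitely often, one checks that $N(U,V)$ is infinite for all non-empty open $U,V\subset Y_u$ and that $Y_u$ is infinite. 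Finally, taking the $g_k$ to grow sufficiently fast, the set of positions of $1$'s in $x_u$, and each of its fattenings by a fixed radius, has zero upper Banach density: a window of length between $|C_k|$ and $|C_{k+1}|$ meets at most two consecutive blocks of level $k$, so the proportion of $1$'s it contains is $O(2^{-k})$, tending to $0$ as the window length grows. By Proposition~\ref{prop:orbit-ubd} this forces $\supp(Y_u,\sigma)=\{0^\infty\}$, so by Theorem~\ref{thm:strongly-proximal} the system $Y_u$ is strongly proximal.

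With this in hand, consider the countable family $\{Y_u\}$ indexed by the admissible words $u$ containing a $1$. Each $Y_u$ is an infinite, strongly proximal, transitive subsystem of $X_P$ containing $0^\infty$, so any two of them meet at $0^\infty$. Since $x_u\in Y_u$ has prefix $u$, and every $z\in X_P$ distinct from $0^\infty$ has a prefix $z_{[0,m)}$ containing a $1$ (while $0^\infty$ belongs to every $Y_u$), the points $x_u$ together with $0^\infty$ are dense, whence $\overline{\bigcup_u Y_u}=X_P$. Theorem~\ref{thm:sp-dmbcs} then produces a dense Mycielski, $\sigma$-invariant Banach scrambled subset $S$ of $X_P$, as required.

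The main obstacle is the tension, within the construction of $x_u$, between two competing demands: admissibility in $X_P$ requires all pairwise differences of $1$'s to lie in $P$, which pushes the repeated copies of $C_k$ to be glued at offsets whose neighbourhoods sit inside intervals of $P$ (the role of thickness), whereas strong proximality of $Y_u$ requires the $1$'s to be so sparse that they disappear in the Banach-density sense, which forces the gaps $g_k$ to be enormous. Reconciling these, and then verifying the zero-upper-Banach-density estimate uniformly over all long windows rather than merely along the blocks $C_k$, is the delicate part; the rest is bookkeeping and an appeal to Theorem~\ref{thm:sp-dmbcs}.
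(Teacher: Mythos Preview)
Your proof is correct and takes a genuinely different route from the paper's. Both arguments reduce to Theorem~\ref{thm:sp-dmbcs}, but the family of strongly proximal transitive subsystems is built very differently.

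The paper stays inside the class of spacing shifts: it picks a thick $Q\subset P$ with $\N\setminus Q$ also thick, sets $Q_n=Q\cup([0,n]\cap P)$, and uses the subshifts $X_{Q_n}\subset X_P$. Each $X_{Q_n}$ is weakly mixing (since $Q_n$ is thick) and has zero topological entropy (since $\N\setminus Q_n$ is thick, via~\cite{BNOST13}); then Kwietniak's result on hereditary shifts~\cite{K13} yields strong proximality. Density of the union follows from $\bigcup_n Q_n=P$.

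You instead manufacture the subsystems by hand as orbit closures $Y_u=\overline{Orb(x_u,\sigma)}$ of explicitly constructed recurrent points, and verify $\supp(Y_u,\sigma)=\{0^\infty\}$ directly from the zero-upper-Banach-density estimate on the positions of $1$'s in $x_u$. Your window-counting claim (``at most two level-$k$ blocks'') is valid once $g_k>|C_k|$ and the $g_k$ are increasing, both of which thickness of $P$ easily accommodates; then the density bound $2\cdot 2^k m_0/|C_k|\to 0$ gives what you need. This buys you a self-contained argument: you never invoke the entropy computation or the hereditary-shift machinery, only Proposition~\ref{prop:orbit-ubd} (or~\ref{prop:support-min-BD1}) together with Theorem~\ref{thm:strongly-proximal}. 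The paper's approach is shorter and produces structured subsystems that are themselves spacing shifts, but at the cost of importing two external results; your construction is more elementary and makes the underlying mechanism---sparse $1$'s forcing a singleton support---fully explicit.
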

\begin{proof}
Since $P$ is thick, there exists a thick set $Q\subset P$ such that
$\N\setminus Q$ is also thick.
For $n\in\N$, let $Q_n=Q\cup([0,n]\cap P)$.
Clearly, $\bigcup_{n=1}Q_n=P$. Then $\overline{\bigcap_{n=1}^\infty X_{Q_n}}=P$.
By \cite[Proposition 2.1]{LZ72}  and \cite[Theorem 3.6]{BNOST13}
$(X_{Q_n},\sigma)$ is weakly mixing and has zero topological entropy.
By Remark~\ref{rem:hs-sp}, $(X_{Q_n},\sigma)$ is also strongly proximal.
Now assumptions of Theorem~\ref{thm:sp-dmbcs} are satisfied, which ends the
proof.
\end{proof}

\begin{cor}\label{cor:full-shift-BP}
For the full shift $(\Sigma,\sigma)$,
there is a dense Mycielski, $\sigma$-invariant Banach scrambled subset $S$ of $\Sigma$ for $\sigma$.
\end{cor}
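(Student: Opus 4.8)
The plan is to recognize the full shift as a particular weakly mixing spacing shift, so that the preceding theorem applies verbatim. The key observation is that the full shift arises as the spacing shift generated by $P=\N$.

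First I would verify the identification $X_\N=\Sigma$. By definition, $X_P$ consists of those $x\in\Sigma$ such that whenever $x_i=x_j=1$ we have $|i-j|\in P\cup\{0\}$. When $P=\N$ this condition is vacuous, since $|i-j|$ is always a non-negative integer; hence every sequence in $\{0,1\}^\Z$ belongs to $X_\N$, and $X_\N=\Sigma$.

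Next I would check that $P=\N$ is thick. Recall that $P$ is thick if for every $n\in\N$ it contains an interval of length $n+1$. Since $\N$ contains \emph{every} interval, it is trivially thick, and so by the criterion of Lau and Zame~\cite{LZ72} the spacing shift $(X_\N,\sigma)=(\Sigma,\sigma)$ is weakly mixing.

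Having established that $(\Sigma,\sigma)$ is a weakly mixing spacing shift, the conclusion follows at once from the preceding theorem on weakly mixing spacing shifts, which produces a dense Mycielski, $\sigma$-invariant Banach scrambled subset. I do not anticipate a genuine obstacle here: the entire content of the corollary is the recognition that the full shift is the spacing shift over the thick set $\N$, and both the identification $X_\N=\Sigma$ and the thickness of $\N$ are immediate. The only point requiring the slightest care is the vacuity of the spacing condition when $P=\N$, which is elementary.
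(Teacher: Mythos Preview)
Your proposal is correct and matches the paper's intended argument exactly: the corollary is stated immediately after the theorem on weakly mixing spacing shifts with no separate proof, so the paper is implicitly invoking precisely the identification $X_\N=\Sigma$ and the trivial thickness of $\N$ that you spell out.
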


Now we consider the interval maps. By an interval map we mean a continuous map $f:[0, 1] \to [0, 1]$.
It is shown in~\cite{D05} that for an interval map, it has positive topological entropy if and only if
it has uncountable scrambled sets invariant under some power of $f$,
and such scrambled sets can be chosen to be syndetically scrambled~\cite{M11}.

\begin{thm}\label{thm:invertal-pe}
If an interval map $f$ has positive topological entropy, then there is a Cantor set $S\subset[0,1]$ such that
$S$ is Banach scrambled and  $f^{2^n}(S)\subset S$ for some $n\in\mathbb{N}$.
\end{thm}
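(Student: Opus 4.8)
The plan is to reduce the positive-entropy hypothesis to the existence of a suitable symbolic structure sitting inside the interval map, and then transport the Banach scrambled set built in the symbolic setting back to the interval. The starting point is the classical theory of interval maps with positive topological entropy: by a theorem of Misiurewicz (via horseshoes) there is some power $f^{k}$ that admits a \emph{horseshoe}, i.e. two disjoint compact subintervals $J_{0},J_{1}\subset[0,1]$ with $f^{k}(J_{i})\supset J_{0}\cup J_{1}$ for $i=0,1$. Equivalently, after replacing $f$ by $g=f^{k}$ one obtains a closed invariant set on which $g$ is semi-conjugate to the full shift $(\Sigma,\sigma)$ on two symbols. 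I would fix such a $g=f^{2^{n}}$ (choosing $k=2^{n}$, which is harmless since a horseshoe for some power yields one for any larger power, and the statement only asks for \emph{some} power of the form $2^{n}$), together with the factor map $\pi\colon(\Lambda,g)\to(\Sigma,\sigma)$, where $\Lambda\subset[0,1]$ is the invariant Cantor set carrying the horseshoe dynamics.

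The second step is to exploit what has already been proved for the full shift. By Corollary~\ref{cor:full-shift-BP}, $(\Sigma,\sigma)$ carries a dense Mycielski, $\sigma$-invariant Banach scrambled set; more usefully, the proof of Theorem~\ref{thm:SP-transitive-BS} shows that one can find a \emph{Cantor} Banach scrambled set inside a strongly proximal transitive subsystem, and the key mechanism is that every off-diagonal pair in the set is a recurrent point of the product system while the whole subsystem is strongly proximal (so the pair is automatically Banach proximal). I would therefore locate inside $\Sigma$ a Cantor Banach scrambled set $C$ whose pairs avoid asymptoticity, and pull it back through $\pi$. The point is that the horseshoe semi-conjugacy is \emph{finite-to-one} in a controlled way and respects the relevant density notions: if $(a,b)$ is Banach proximal in $\Sigma$, then any lift $(\tilde a,\tilde b)\in\Lambda^{2}$ with $\pi(\tilde a)=a,\ \pi(\tilde b)=b$ is Banach proximal in $\Lambda$, because on the horseshoe the coding intervals have uniformly bounded diameter and the sets $\{n:\ d(g^{n}\tilde a,g^{n}\tilde b)<\ep\}$ contain $\{n:\ a_n=b_n\}$-type sets of Banach density one; and non-asymptoticity is likewise preserved since distinct symbol pairs that recur force the interval points to stay bounded apart infinitely often.

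The third step is to produce an actual Cantor set of lifts. Since $\pi$ maps $\Lambda$ onto $\Sigma$ continuously and $\Lambda$ is a Cantor set, I would choose a Cantor subset $\tilde C\subset\Lambda$ mapping homeomorphically onto the Cantor Banach scrambled set $C\subset\Sigma$ (a section exists because the coding map is a homeomorphism off the countable set of points with non-unique codes, and one may shrink $C$ to avoid that countable set while keeping it Cantor). Then $\tilde C$ is a Cantor Banach scrambled set for $g=f^{2^{n}}$, and after forming $S=\bigcup_{i\ge 0}g^{i}\tilde C$ as in Theorem~\ref{thm:SP-transitive-BS} one obtains the invariance $g(S)\subset S$, i.e. $f^{2^{n}}(S)\subset S$; intersecting back down or taking a Cantor subset of $S$ preserves the Banach scrambled property since any subset of a Banach scrambled set is Banach scrambled.

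The main obstacle, and the step needing the most care, is the \textbf{transfer of the Banach density condition across the semi-conjugacy}. Banach proximality is a two-sided density-one statement, and while proximality and asymptoticity transport easily under continuous factor maps, Banach density one is delicate because the coding map can blur distances: I must verify that the finite horseshoe partition has the property that symbolic closeness on a set of Banach density one forces metric closeness on a set of Banach density one in $[0,1]$. This hinges on the uniform hyperbolicity/contraction supplied by the horseshoe (the nested cylinder intervals shrink uniformly), which guarantees that agreement of long central symbol blocks yields small interval distance with a uniform modulus. Once this quantitative comparison is established, the remaining verifications (Cantor structure of the lift, invariance under $f^{2^{n}}$, and non-asymptoticity) are routine, so I expect essentially all the real work to be in setting up the horseshoe coding cleanly and checking this density comparison.
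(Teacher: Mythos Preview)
Your approach differs from the paper's in one essential respect: the paper invokes Theorem~9 of \cite{M11}, which produces an $f^{2^{n}}$-invariant closed set $X\subset[0,1]$ on which $f^{2^{n}}$ is \emph{conjugate} (not merely semi-conjugate) to the full shift $(\Sigma,\sigma)$, and then simply transports the $\sigma$-invariant Banach scrambled set of Corollary~\ref{cor:full-shift-BP} through this conjugacy. Since a conjugacy is a homeomorphism intertwining the dynamics, Banach proximality, non-asymptoticity, and invariance all transfer automatically---the entire ``main obstacle'' you identify evaporates, and the proof is two lines.

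Your horseshoe/semi-conjugacy route can be made to work, but two steps need repair. First, the claim that ``a horseshoe for some power yields one for any larger power'' is false as stated: a horseshoe for $f^{k}$ yields one for $f^{mk}$, not for arbitrary $f^{\ell}\ge f^{k}$, so arranging the exponent to be a power of $2$ already requires the interval-specific structure that \cite{M11} packages. Second, your density-transfer sketch is on the right track---if the itineraries agree on a set $F$ of Banach density one then $\bigcap_{j=0}^{m}(F-j)$ still has Banach density one by Lemma~\ref{lem:BD1}, placing the lifted points in a common depth-$m$ cylinder---but this needs cylinder diameters shrinking to zero, which for interval horseshoes is precisely the strict-horseshoe condition that makes the itinerary map a \emph{conjugacy} anyway; so you end up implicitly using the same hypothesis the paper cites, only less efficiently. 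Finally, your last move (``taking a Cantor subset of $S$'') would forfeit the invariance $f^{2^{n}}(S)\subset S$: passing to a subset preserves scrambledness but not forward-invariance, so that step does not close the argument as written.
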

\begin{proof}
Under the assumption, by Theorem~9 in~\cite{M11} there exists $n\in\N$, an $f^{2^n}$-invariant closed subset $X\subset [0,1]$
such that $(X,f^{2^n})$ is conjugate to the full shift $(\Sigma,\sigma)$.
Now the result follows from Corollary~\ref{cor:full-shift-BP}.
\end{proof}

\begin{thm}
If an interval map $f$ is Li-Yorke chaotic, then it has a Cantor, Banach scrambled set.
\end{thm}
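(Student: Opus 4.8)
The plan is to split the argument according to the topological entropy of $f$, since the two regimes demand quite different tools. If $f$ has positive topological entropy, there is essentially nothing new to do: Theorem~\ref{thm:invertal-pe} already produces a Cantor set $S\subset[0,1]$ that is Banach scrambled (indeed invariant under $f^{2^n}$ for some $n$), and, as noted at the start of this section, a set is Banach scrambled for $f^{2^n}$ if and only if it is Banach scrambled for $f$. So I would immediately reduce to the case where $f$ is Li-Yorke chaotic but has \emph{zero} topological entropy. This is a genuinely nonempty class — Smítal's zero-entropy chaotic interval maps provide examples — and it is precisely the case not covered by Theorem~\ref{thm:invertal-pe}.

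In the zero-entropy case the key simplification is the result recalled in the introduction from~\cite{Li11}: for an interval map with zero topological entropy, every proximal pair is automatically Banach proximal. Consequently, for such an $f$ the proximal relation and the Banach proximal relation coincide on $[0,1]^2$, so an ordinary scrambled set is already a Banach scrambled set. Thus I would reduce the whole problem to producing a Cantor \emph{ordinary} scrambled set, that is, a Cantor set $C$ such that any two distinct points of $C$ form a pair that is proximal but not asymptotic. For such $C$ every nondiagonal pair is proximal-and-not-asymptotic, hence by~\cite{Li11} Banach-proximal-and-not-asymptotic, so $C$ is Banach scrambled, exactly as required.

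To obtain the Cantor scrambled set I would invoke the structure theory of chaotic interval maps: the hypothesis that $f$ is Li-Yorke chaotic gives an uncountable scrambled set, and for interval maps this can always be refined to a perfect (Cantor) scrambled set, a fact built into the classical description of scrambled sets for zero-entropy chaotic interval maps. The hard part, and the one place where genuine care is needed, is exactly this refinement step. Unlike the positive-entropy branch, here there is typically no transitive strongly proximal subsystem on which to run the Mycielski machinery of Theorem~\ref{thm:SP-transitive-BS}: the relevant infinite $\omega$-limit sets are solenoidal (odometer-like), hence distal and carrying no scrambled pairs at all, so that the scrambled pairs live only among points approaching the solenoid. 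The existence of a perfect scrambled set therefore has to come from interval-map structure theory rather than from a soft residuality argument, after which the coincidence of the proximal and Banach proximal relations from~\cite{Li11} upgrades it to a Cantor Banach scrambled set for free.
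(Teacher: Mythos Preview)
Your proposal is correct and follows essentially the same route as the paper: split according to whether $f$ has positive or zero topological entropy, invoke Theorem~\ref{thm:invertal-pe} in the first case, and in the second case use the result from~\cite{Li11} that proximal pairs coincide with Banach proximal pairs for zero-entropy interval maps, so that any Cantor scrambled set is automatically Banach scrambled. The only difference is one of precision: where you gesture at ``the structure theory of chaotic interval maps'' to supply the Cantor scrambled set in the zero-entropy case, the paper simply cites \cite[Theorem~4.14]{Li11} for exactly this step.
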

\begin{proof}
If $f$ has positive topological entropy, the result follows from Theorem~\ref{thm:invertal-pe}.
If $f$ has zero topological entropy,  the result follows from \cite[Theorem~4.14]{Li11}
and the fact that
every proximal pair is Banach proximal for zero topological entropy maps~\cite{Li11}.
\end{proof}

\section*{Acknowledgement}
The first author is supported in part by STU Scientific Research Foundation for Talents (NTF12021),
Guangdong Natural Science Foundation (S2013040014084)
and National Natural Science Foundation of China (Grant No. 11326135). The second author is also
supported by National Natural Science Foundation of China (Grant Nos. 11071231, 11171320, 11371339).
The authors would like to thank Xiangdong Ye and the anonymous referee for many helpful comments and suggestions.

\bibliographystyle{amsplain}

\end{document}